\documentclass[a4paper]{amsart}
\usepackage{bbm}
\usepackage{color}
\usepackage{enumerate}
\usepackage{cite}
\usepackage[utf8]{inputenc}
\usepackage[all,cmtip]{xy}
\usepackage{etoolbox}
\apptocmd{\thebibliography}{\raggedright}{}{}

\usepackage{amscd}

\usepackage{extarrows}

\usepackage{tikz}
\usetikzlibrary{arrows}

\usepackage[leqno]{amsmath}
\usepackage{amsthm,amssymb,amsfonts}

\theoremstyle{plain}
\newtheorem{Theorem}{Theorem}[section]
\makeatletter
\newlength{\@thlabel@width}%
\newcommand{\thmenumhspace}{\settowidth{\@thlabel@width}{\itshape1.}\sbox{\@labels}{\unhbox\@labels\hspace{\dimexpr-\leftmargin+\labelsep+\@thlabel@width-\itemindent}}}
\makeatother
\newtheorem{Lemma}[Theorem]{Lemma}

\newtheorem{Corollary}[Theorem]{Corollary}
\newtheorem{Proposition}[Theorem]{Proposition}

\theoremstyle{definition}
\newtheorem{Def}[Theorem]{Definition}
\newtheorem{Remark}[Theorem]{Remark}

\numberwithin{equation}{section}

\newcommand{\Z}{\ensuremath{\mathbb{Z}}}

\newcommand{\OO}{\ensuremath{\mathcal{O}}}
\newcommand{\Q}{\ensuremath{\mathbb{Q}}}

\newcommand{\R}{\ensuremath{\mathbb{R}}}
\newcommand{\C}{\ensuremath{\mathbb{C}}}

\newcommand{\p}{\ensuremath{\mathfrak{p}}}
\renewcommand{\P}{\ensuremath{\mathfrak{P}}}
\newcommand{\q}{\ensuremath{\mathfrak{q}}}

\newcommand{\A}{\ensuremath{\mathbb{A}}}

\newcommand{\inv}{^{-1}}									

\newcommand{\tild}[1]{\ensuremath{\widetilde{#1}}}						

\newcommand{\too}{\longrightarrow}								
\newcommand{\mapstoo}{\longmapsto}

\newcommand{\hooklongrightarrow}{\lhook\joinrel\too}
\newcommand{\into}{\hookrightarrow}
\newcommand{\onto}{\twoheadrightarrow}

\newcommand{\cf}{{\mathbbm 1}}
\newcommand{\PP}{\ensuremath{\mathbb{P}^1}}					
\newcommand{\sinfty}{\ensuremath{^{S,\infty}}}
\newcommand{\PPP}{\ensuremath{\mathbb{P}}}	

\newcommand{\G}{\ensuremath{\mathcal{G}}}
\newcommand{\n}{\ensuremath{\mathfrak{n}}}
\newcommand{\m}{\ensuremath{\mathfrak{m}}}

\newcommand{\T}{\ensuremath{\mathcal{T}}}
\newcommand{\V}{\ensuremath{\mathcal{V}}}
\newcommand{\E}{\ensuremath{\vec{\mathcal{E}}}}

\newcommand{\St}{\textnormal{St}}
\newcommand{\TSt}{\textnormal{St}^{\textnormal{tw}}}
\newcommand{\HH}{\textnormal{H}}
\newcommand{\dd}{\textnormal{d}}
\newcommand{\nr}{\textnormal{nr}}
\newcommand{\tw}{\textnormal{tw}}
\newcommand{\LI}{\mathcal{L}}
\newcommand{\dR}{\textnormal{dR}}
\DeclareMathOperator{\ES}{ES}

\DeclareMathOperator{\Hom}{Hom}

\DeclareMathOperator{\Gal}{Gal}
\DeclareMathOperator{\invol}{inv}

\DeclareMathOperator{\Ker}{ker}

\DeclareMathOperator{\id}{id}

\DeclareMathOperator{\GL}{GL}
\DeclareMathOperator{\PGL}{PGL}

\DeclareMathOperator{\rec}{rec}
\DeclareMathOperator{\drec}{drec}
\DeclareMathOperator{\Div}{Div}
\DeclareMathOperator{\Ends}{Ends_{\p}}

\DeclareMathOperator{\Dist}{\operatorname{Dist}}

\DeclareMathOperator{\Tdelta}{\delta^{tw}_\p}
\DeclareMathOperator{\cind}{c-ind}

\DeclareMathOperator{\ev}{ev}
\DeclareMathOperator{\Tev}{ev^{tw}_\p}
\DeclareMathOperator{\Ev}{Ev}

\DeclareMathOperator{\ram}{ram}

\DeclareMathOperator{\univ}{univ}

\DeclareMathOperator{\INV}{inv}
\DeclareMathOperator{\Ad}{Ad}

\DeclareMathOperator{\ord}{ord}
\DeclareMathOperator{\Stab}{Stab}

\DeclareMathOperator{\adic}{adic}
\DeclareMathOperator{\Tate}{Tate}
\DeclareMathOperator{\Jac}{Jac}

\def\Xint#1{\mathchoice%
{\XXint\displaystyle\textstyle{#1}}%
{\XXint\textstyle\scriptstyle{#1}}%
{\XXint\scriptstyle\scriptscriptstyle{#1}}%
{\XXint\scriptscriptstyle\scriptscriptstyle{#1}}%
\!\int}%
\def\XXint#1#2#3{{\setbox0=\hbox{$#1{#2#3}{\int}$}%
\vcenter{\hbox{$#2#3$}}\kern-.5\wd0}}%

\hyphenation{co-kernel}

\title[Anticyclotomic Stickelberger elements]{Leading terms of anticyclotomic Stickelberger elements and $p$-adic periods}
\subjclass[2010]{Primary 11F67; Secondary 11F75, 11G18, 11G40}

\author[F. Bergunde]{Felix Bergunde}
\address{F. Bergunde \\ Fakult\"at f\"ur Mathematik \\ Universit\"at Bielefeld \\ Universit\"atsstra{\ss}e 25 \\ 33615 Bielefeld \\ Germany}
\email{fbergund@math.uni-bielefeld.de}
\author[L. Gehrmann]{Lennart Gehrmann}
\address{L. Gehrmann \\ Fakult\"at f\"ur Mathematik \\ Universit\"at Duisburg-Essen \\ Thea-Leymann-Stra{\ss}e 9 \\ 45127 Essen \\ Germany}
\email{lennart.gehrmann@uni-due.de}

\begin{document}

\begin{abstract}
Let $E$ be a quadratic extension of a totally real number field.
We construct Stickelberger elements for Hilbert modular forms of parallel weight 2 in anticyclotomic extensions of $E$.
Extending methods developed by Dasgupta and Spie\ss~ from the multiplicative group to an arbitrary one-dimensional torus we bound the order of vanishing of these Stickelberger elements from below and, in the analytic rank zero situation, we give a description of their leading terms via automorphic $\LI$-invariants.
If the field $E$ is totally imaginary, we use the $p$-adic uniformization of Shimura curves to show the equality between automorphic and arithmetic $\LI$-invariants.
This generalizes a result of Bertolini and Darmon from the case that the ground field is the field of rationals to arbitrary totally real number fields.\end{abstract}

\maketitle

\tableofcontents

\section*{Introduction}
\textbf{Introduction}.

Let $A$ be an elliptic curve over the field of rational numbers.
In the seminal article \cite{MTT} Mazur, Tate and Teitelbaum formulate a $p$-adic Birch and Swinnerton-Dyer conjecture for the $p$-adic $L$-function $L_p(A,s)$ associated to $A$.
Generically, the order of vanishing of $L_p(A,s)$ at $s=0$ should be equal to the rank of $A(\Q)$.
But in the case of split multiplicative reduction at $p$ the vanishing of an Euler like factor forces $L_p(A,s)$ to vanish at $s=0$ even though the complex $L$-function $L(A,s)$ might not vanish at $s=1$.
In this situation one expects that $$\ord_{s=0}L_{p}(A,s)=  \ord_{s=1}L(A,s)+1.$$
Thus, if the complex $L$-function does not vanish at $s=1$, the $p$-adic $L$-function should have a simple zero.
In \cite{MTT} the following leading term formula is proposed:
\begin{align}\label{formula}
 L_{p}^{\prime}(A,0)=\LI(A)\ \frac{L(A,1)}{\Omega_{A}}
\end{align}
Here $\Omega_{A}$ is a real period attached to $A$ and $\LI(A)$ is the $\LI$-invariant of $A$ at $p$, which is defined as follows:
Since $A$ has split multiplicative reduction there exists a rigid analytic uniformization
$$\mathbb{G}_m/\langle q_p^{\Tate}\rangle\too A_{\Q_p}$$
with $q_p^{\Tate}\in \Q_{p}^{\ast}$ the Tate period of $A_{\Q_p}$.
The $\LI$-invariant is given by the quotient $$\LI(A)=\frac{\log_p(q_p^{\Tate})}{\ord_p(q_p^{\Tate})}.$$
Formula \eqref{formula} was proven by Greenberg and Stevens (cf.~\cite{GS}) utilizing the two-variable Kitagawa-Mazur $p$-adic $L$-function, which is defined by varying the modular form associated with $A$ in a Hida family.

In \cite{BD99} Bertolini and Darmon prove an analogue of \eqref{formula} for the anticyclotomic $p$-adic $L$-function of the base change of $A$ to an imaginary quadratic field in which $p$ splits.
In contrast to the proof of Greenberg and Stevens and many other proofs of similar exceptional zero formulae, they do not make use of $p$-adic families of automorphic forms.
Instead they use the Cerednik-Drinfeld uniformization of Shimura curves.
The main purpose of this note is to generalize their result to Hilbert modular forms of parallel weight $2$.
Let us give a more detailed account of the content of this article:

We fix a quadratic extension $E/F$ of number fields with totally real base field $F$.
In addition, we fix a non-split quaternion algebra $B/F$ in which $E$ can be embedded.
We assume that at the Archimedean places of $F$ the algebra $B$ is split if an only if $E$ is split.
Let $\pi_B$ be an automorphic representation of $B^{\ast}/F^{\ast}$ that is cohomological with respect to the trivial coefficient system.
For every allowable modulus $\m$ (see Definition \ref{allowable}) we can pull back the cohomology class $\kappa$ associated to the new-vector of $\pi_B$ via an embedding $T=E^{\ast}/F^{\ast}\too B^{\ast}/F^{\ast}$ of conductor $\m$ to get a distribution valued cohomology class $\Delta_{\m}(\kappa)$.

Let $L/E$ be an anticyclotomic extension with Galois group $\G$.
Following Das\-gup\-ta and Spie\ss~the cap product of the Artin reciprocity map with a fundamental class for the group of relative units of $E/F$ gives a homology class $c_L$.
Let $R_\pi$ be the ring of integers of the field of definition of $\pi_B$. By the assumption on the splitting behaviour at infinite places we can define the Stickelberger element $$\Theta_{\mathfrak{m}}(L/F,\kappa)=\Delta_{\mathfrak{m}}(\kappa)\cap c_L\in R_\pi[\G].$$
An analysis of the action of local points of the torus $T$ on Bruhat-Tits trees gives functional equations for Stickelberger elements (Proposition \ref{globfunceq}) and norm relations between Stickelberger elements of different moduli (Theorem \ref{compatibility}).
Some crucial calculations on the tree have already been carried out by Cornut and Vatsal in \cite{CV}.

In Section \ref{Interpolation} we use results of File, Martin and Pitale (cf.~\cite{FMP}) on toric period integrals to show that our Stickelberger elements interpolate (square roots of) special values of the $L$-function of the base change of $\pi_B$ with respect to $E/F$, i.e.~for every character $\chi\colon \G\to\C^{\ast}$ of conductor $\m$ we have $$|\chi(\Theta_{\mathfrak{m}}(L/F,\kappa))|^{2}\stackrel{\cdot}{=} L(1/2,\pi_{B,E}\otimes \chi),$$
where ``$\stackrel{\cdot}{=}$'' means equality up to explicit fudge factors.
In the CM case, i.e.~if the field $E$ is totally imaginary, our construction is closely related to van Order's construction of $p$-adic $L$-functions (see \cite{vO}).
Note that, if $E$ is totally real, one expects that there exists no anticyclotomic $\Z_\p$-extension but anticyclotomic Stickelberger elements at finite level can still be defined.

For a ring $R$ and and abelian group $H$ let $I_{R}(H)\subseteq R[H]$ be the kernel of the augmentation map $R[H]\to R$ given by $h\mapsto 1$.
We define the order of vanishing $\ord_R(\xi)$ of an element $\xi \in R[H]$ as
\begin{align*}
 \ord_R(\xi) = \begin{cases}
                r 	& \text{ if } \xi \in I_{R}(H)^r - I_{R}(H)^{r+1}, \\
                \infty 	& \text{ if } \xi \in I_{R}^{j}(H)\ \mbox{for all}\ j\geq 1.
               \end{cases}              
\end{align*}
By generalizing methods developed by Dasgupta and Spie\ss~ in \cite{DS} from the split torus to an arbitrary one-dimensional torus we show that $$\ord_{R_\pi}(\Theta_{\mathfrak{m}}(L/F,\kappa))\geq |S_\m|,$$ where $S_\m$ is the set of all primes $\p$ of $F$ such that $\p\mid \m$ and either $\pi_{B,\p}$ is Steinberg or $\p$ is inert in $E$ and $\pi_{B,\p}$ is the non-trivial unramified twist of the Steinberg representation.
More generally, in Theorem \ref{ordvanish} we show that the above anticyclotomic Stickelberger element lies in a product of partial augmentation ideals. 

In Section \ref{leadingterms} we prove the following leading term formula: Suppose that all primes $\p$ in $S_\m$ are split in $E$ and $\pi_{B,\p}$ is Steinberg.
Then we define ``automorphic periods'' $\q_\p\in F_\p^{\ast}\otimes R_\pi$ such that
$$\prod_{\p\in S_{\m}} \ord_\p(q_\p)\cdot\Theta_{\mathfrak{m}}(L/F,\kappa)
\stackrel{\cdot}{=}\prod_{\p\in S_{\m}}(\rec_{\p}(q_\p)-1)\cdot \sqrt{L(1/2,\pi_{B,E})}\bmod I_{R_\pi}(\G)^{|S_\m|+1}$$
holds, where $\rec_\p$ is the local reciprocity map at a prime of $E$ lying above $\p$.
This generalizes Molina's work \cite{Santi} on exceptional zeros of anticyclotomic $p$-adic $L$-functions in the CM case and is heavily inspired by Spie\ss' article \cite{Sp}.
Crucial in the definition of these automorphic periods are extension classes of the Steinberg representation, which were first studied by Breuil in \cite{Br}.
More precisely, for each continuous homomorphism $l\colon F_{\p}^{\ast}\to A$ to some topological group $A$ we construct a short exact sequence $$0\too \St_\p(A)\too \mathcal{E}(l)\too\Z\too 0,$$ where $\St_\p(A)=C(\PP(F_\p),A)/A$ is the $A$-valued continuous Steinberg representation.

Finally, if $E$ is totally imaginary and therefore, $B$ is totally definite, we compare automorphic periods of $\pi_B$ and Tate periods of the abelian variety associated to $\pi_B$.
The first step is to show that we have a commutative diagram
\begin{center}
 \begin{tikzpicture}
    \path 	(0.5,0) node[name=A]{$0$}
		(3,0) node[name=B]{$\Div^0(\mathcal{H}_\p(\C_p))$}
		(6,0) node[name=C]{$\Div(\mathcal{H}_\p(\C_p))$}
		(8.5,0) node[name=D]{$\Z$}
		(10.5,0) node[name=E]{$0$}
		(0.5,-1.5) node[name=F]{$0$}
		(3,-1.5) node[name=G]{$\St_\p(\C_p^{\ast})$}
		(6,-1.5) node[name=H]{$\mathcal{E}(\sigma_\p)$}
		(8.5,-1.5) node[name=I]{$\Z$}
		(10.5,-1.5) node[name=J]{$0$};
    \draw[->] (A) -- (B) ;
		\draw[->] (B) -- (C) ;
		\draw[->] (C) -- (D) ;
		\draw[->] (D) -- (E) ;
		\draw[->] (F) -- (G) ;
		\draw[->] (G) -- (H) ;
		\draw[->] (H) -- (I) ;
		\draw[->] (I) -- (J) ;
    \draw[->] (B) -- (G) node[midway, right]{$\Psi$};
		\draw[->] (C) -- (H);
    \draw[->] (D) -- (I) node[midway, right]{$=$};
  \end{tikzpicture}
\end{center}
with exact rows.
Here $\mathcal{H}_\p$ denotes the $p$-adic upper half plane, $\Psi$ is the map that associates to each divisor $D$ a rational function with divisor $D$ and $\sigma_\p\colon F_{\p}^{\ast}\to \C_{\p}^{\ast}$ is a fixed embedding.
Combined with Dasgupta's variant of the Manin-Drinfeld uniformization theorem, which was implicitly already given in \cite{BD98}, we get a description of the Jacobian of certain Mumford curves via the universal extension of the Steinberg representation (see Theorem \ref{unif}).
By applying this description to Jacobians of Shimura curves associated to quaternion algebras $\bar{B}$, whose invariants only differ from that of $B$ at $\p$ and one Archimedean place, we are able to compare the different periods.

\bigskip
\textbf{Acknowledgments}
It is our pleasure to thank Chan Ho-Kim for giving an excellent talk at the workshop on 'Arithmetic of Euler systems' in Benasque, which encouraged us to start this project.
We are grateful to Jeanine van Order for providing us with the reference \cite{FMP}.
While writing this note Santiago Molina informed us that he proved a similar result on the equality of automorphic and algebraic periods.
We would like to thank him for providing us with a preliminary manuscript of his results.
We thank the anonymous referee, whose detailed comments helped to improve the exposition of our results. 
The first named author was financially supported by the DFG within the CRC 701 'Spectral Structures and Topological Methods in Mathematics'.

\bigskip
\textbf{Notations}.
We will use the following notations throughout the article.
All rings are commutative and unital.
The group of invertible elements of a ring $R$ will be denoted by $R^{\ast}$.
Given a group $H$ and a group homomorphism $\chi:H\to R^{\ast}$ we let $R(\chi)$ be the representation of $H$ whose underlying $R$-module is $R$ itself and on which $H$ acts via the character $\chi$.
If $N$ is another $R[H]$-module, we put $N(\chi)=N\otimes_{R}R(\chi)$.
Let $\Theta$ be an element of $R[H]$.
We write $\Theta^{\vee}$ for the image of $\Theta$ under the map induced by inversion on $H$.

For a set $X$ and a subset $A\subseteq X$ the characteristic function $\cf_{A}\colon X\to \left\{0,1\right\}$ is defined by
\begin{align*}
 \cf_{A}(x) = \begin{cases}
	       1 & \mbox{if } x\in A,\\
	       0 & \mbox{else}.
	      \end{cases}
\end{align*}

Given topological spaces $X,Y$ we will write $C(X,Y)$ for the set of continuous functions from $X$ to $Y$.
For a topological ring $R$ we define $C_{c}(X,R) \subseteq C(X,R)$ as the subset of continuous functions with compact support.
If we consider $Y$ (resp.~$R$) with the discrete topology, we will often write $C^{0}(X,Y)$ (resp.~$C_{c}^{0}(X,R)$) instead.

For a ring $R$ and an $R$-module $N$, we define the $R$-module of $N$-valued distributions on $X$ as $\Dist(X,N)=\Hom_{\Z}(C^{0}_{c}(X,\Z),N)$.
If $X$ is discrete, we have the pairing
\begin{align*}
C_{c}^{0}(X,\Z)\times C^{0}(X,N)\too N \intertext{ given by } (\psi,\phi)\mapstoo \sum_{x\in X} (\psi\cdot\phi)(x),
\end{align*}
which induces an isomorphism of $R$-modules
\begin{align}\label{discrete}
C^{0}(X,N)\xlongrightarrow{\cong}\Dist(X,N).
\end{align}
We will always identify these two $R$-modules via the above isomorphism if $X$ is discrete.
In the case that $X$ is a compact space, we denote the space of $N$-valued distributions of total volume $0$ by $\Dist_0(X,N)$.

We say that an $R$-module $N$ is prodiscrete if $N$ is a topological group such that there exist open $R$-submodules $$\ldots\subseteq N_2\subseteq N_1\subseteq N$$ with $\bigcap_i N_i =\left\{0\right\}$ and $N=\varprojlim_{i}N/N_i$.
Let $X$  be a totally disconnected compact space $X$ and $N$ a prodiscrete $R$-module.
We restrict the canonical pairing $$\varprojlim_{i}C^{0}(X,N/N_i)\otimes \Dist(X,R)\too \varprojlim_{i} N/N_i=N$$ to $C(X,N)$ via the embedding
$$C(X,N)\hooklongrightarrow \varprojlim_{i}C^{0}(X,N/N_i).$$
This yields an integration pairing
\begin{align}\label{integration}
 C(X,N)\otimes \Dist(X,R)\too N.
\end{align}

Throughout the article we fix a totally real number field $F$ with ring of integers $\OO_{F}$. 
For a non-zero ideal $\mathfrak{a} \subseteq \OO_F$ we set $N(\mathfrak{a})=\left|\OO_F/\mathfrak{a}\right|$.
If $v$ is a place of $F$, we denote by $F_{v}$ the completion of $F$ at $v$.
If $\p$ is a finite place, we let $\OO_{F_\p}$ denote the valuation ring of $F_\p$ and write $\ord_{\p}$ for the normalized valuation.

For a finite (possibly empty) set $S$ of places of $F$ we define the "$S$-truncated adeles" $\A^{S}$ as the restricted product of the completions $F_{v}$ over all places $v$ which are not in $S$. We often write $\A\sinfty$ instead of $\A^{S\cup S_{\infty}}$.
Here $S_{\infty}$ denotes the set of Archimedean places of $F$. If $G$ is an algebraic group over $F$ and $v$ is a place of $F$, we write $G_v=G(F_v)$ and put $G_S=\prod_{v\in S} G_v$. 
Furthermore, if $K\subseteq G(\A)$ is a subgroup, we define $K^{S}$ as the image of $K$ under the quotient map $G(\A)\to G(\A^{S})$.
If $S$ is a set of finite places of $F$ and $\m\subseteq \OO_F$ is a non-zero ideal we put
$$S_\m=\left\{\p\in S\mid \p\ \mbox{divides}\ \m\right\}.$$

\section{Anticyclotomic characters and homology classes}\label{first}
In Section 3 of \cite{DS} Dasgupta and Spie\ss~develop a machinery to bound the order of vanishing of Stickelberger elements coming from distributions on the split one-dimensional torus.
In this section we indicate how to generalize their methods to non-split tori.
At primes at which the torus splits essentially the same arguments as in \cite{DS} apply.
At a non-split prime $\p$ the situation turns out to be even simpler: the local torus is compact and thus, the rank does not change if one passes from arithmetic subgroups to $\p$-arithmetic subgroups of the torus.

Let us fix a quadratic extension $E$ of $F$ with ring of integers $\OO$ and Galois group generated by $\tau$.
We write $d$ for the number of Archimedean places of $F$ which are split in $E$.
For a finite place $\p$ of $F$ all $\OO_{F_\p}$-orders in $E_\p$ are of the form $\OO_{F_\p} + \p^m \OO_{\p}$ for some $m\geq 0$, where $\OO_{\p}$ denotes the maximal $\OO_{F_\p}$-order in $E_\p$.

We consider the algebraic torus $T=E^\ast/F^\ast$ over $F$.
If $\p$ is a finite place of $F$, we write $U_{T_\p}^{(m)}$ for the image of $(\OO_{F_\p} + \p^m \OO_{\p})^{\ast}$ in $T_{\p}$.
If $v$ is an Archimedean place of $F$, we define $U_{T_v}$ as the connected component of $1$ in $T_{v}$.
Further, we put $$U_{T_{\infty}}=\prod_{v \in S_{\infty}} U_{T_v} \subset T_\infty.$$
Given a non-zero ideal $\m \subseteq \OO_F$ we define $$U_T(\m) = \prod_{\p \notin S_{\infty}} U_{T_\p}^{(\ord_\p(\m))} \times U_{T_{\infty}}\subseteq T(\A).$$
To ease the notation we write $U_T$ instead of $U_T(\OO_F)$.

For every prime $\p$ of $F$ we fix once and for all a prime $\P$ of $E$ lying above $\p$ and a local uniformizer $\varpi_{\P}$ at $\P$.
If $\p$ is split, the choice of $\P$ determines an isomorphism $T_\p\cong F_{\p}^{\ast}$. We will always identify these two groups via the above isomorphism.
Likewise, for every split Archimedean place $v$ of $F$ we fix a place $w$ of $E$ above $v$ and identify $T_v$ with $F_v^{\ast}$.

\subsection{Fundamental classes} \label{Fundamental}
Suppose that there exists an Archimedean place $v$ of $F$ which splits in $E$.
The group $U_{T_v}\cong \R^{\ast}_{>0}$ is torsion-free.
Therefore, for every subgroup $A \subseteq T(F)$ the group
$$A^{+}=\ker\left(A\too T_{\infty}/ U_{T_\infty}\right)$$
is torsion-free.

In the CM case, i.e.~there is no Archimedean place that splits in $E$, we choose an auxiliary finite place $\q$ of $F$ and a maximal open torsion-free subgroup $U_{T_\q}^+\subseteq U_{T_\q}$.
If $A\subseteq T(F)$ is a subgroup such that the image of $A$ under the embedding $T(F)\into T_{\q}$ is contained in $U_{T_{\q}}$, we define
$$A^{+}=\ker\left(A\too U_{T_{\q}}/ U_{T_\q}^+\right).$$
Similarly, if $\widetilde{U} \subseteq U_{T}$ is any subgroup, we define $\widetilde{U}^+\subseteq \widetilde{U}$ to be the subgroup of elements which $\q$-component lies in $U_{T_\q}^+$.
To avoid distinguishing the two cases we simply put $\widetilde{U}^+=\widetilde{U}$ if there is one Archimedean place that splits.

\begin{Remark}
For the rest of the article we use the following convention in the CM case: whenever we choose a set of finite primes $S$ (resp.~a non-zero ideal $\m$) of $F$ we will assume that the fixed prime $\q$ is not contained in $S$ (resp. co-prime to $\m$).
\end{Remark}

Given a finite (possibly empty) set $S$ of places of $F$, an open subgroup $\widetilde{U} \subseteq U_T^{S}$ and a ring $R$ we define
\begin{align*}
 \mathcal{C}_{?}(\widetilde{U},R)^S=C_{?}^{0}(T(\A^{S})/\widetilde{U}^+,R)
\end{align*}
for $?\in\left\{\emptyset, c\right\}$.
For a non-zero ideal $\m\subseteq \OO_F$ we set
\begin{align}\label{funcspac}
 \mathcal{C}_{?}(\m,R)^S=\mathcal{C}_{?}(U_T(\m)^{S},R)^S.
\end{align}
If $S$ is the empty set, we drop it from the notation.

Further, we define $$\mathcal{U}_S=\Ker\left(T(F)\too T(\A^{S})/U^{S}_{T}\right).$$
By Dirichlet's unit theorem $\mathcal{U}_{S}^+$ is a free group of rank $d+r$, where $r$ is the number of places in $S$ which are split in $E$.
Thus, the homology group $\HH_{d+r}(\mathcal{U}_{S}^+,\Z)$ is free of rank one. We fix a generator $\eta^{S}$ of this group.
Further, we fix a fundamental domain $\mathcal{F}^S$ for the action of $T(F)/\mathcal{U}_{S}^+$ on $T(\A^{S})/U^{S,+}_{T}$.
By Shapiro's lemma the identification $$\mathcal{C}_c(\OO,\Z)^{S}=\cind_{\mathcal{U}_{S}^+}^{T(F)}C(\mathcal{F}^S,\Z)$$ induces an isomorphism $$\HH_{d+r}(\mathcal{U}_{S}^{+},C(\mathcal{F}^S,\Z))\xlongrightarrow{\cong} \HH_{d+r}(T(F),\mathcal{C}_c(\OO,\Z)^{S}).$$
The fundamental class $\vartheta^{S}$ is defined as the image of the cap product of $\eta^{S}$ with the characteristic function $\cf_{\mathcal{F}^S}$ under the above isomorphism.
Similarly as before, we drop the superscript $S$ if it is the empty set.

\begin{Remark}\label{fundamentaltori}
Let $S^{+}\subseteq S_\infty$ be the set of all split Archimedean places of $F$.
A generator $\eta$ of $\HH_{d}(\mathcal{U}^+,\Z)$ can be identified with the fundamental class of the compact torus $U_{T_{S^{+}}} /\mathcal{U}^+$.
\end{Remark}

If $\p\in S$ is either inert or ramified in $E$, the group $\HH^{0}(T_\p,C(T_{\p}/U_{T_{\p}},\Z))$ is free of rank one.
Let $c_{\p}$ be the normalized generator, i.e.~ the function that is constantly one.
If $\p\in S$ is split, we have a sequence of $T_\p$-modules
\begin{align}\label{fundamentalexact}
 0\too C_c(F_{\p}^{\ast},\Z)\too C_c(F_\p,\Z)\xlongrightarrow{g\mapsto g(0)}\Z\too 0,
\end{align}
where the torus acts via the identification $T_\p\cong F_{\p}^{\ast}$.
Taking $U_{T_\p}$-invariants yields the exact sequence
\begin{align}\label{fundexactinv}
 0\too C_c(F_{\p}^{\ast}/U_{T_\p},\Z)\too C_c(F_\p,\Z)^{U_{T_\p}}\xlongrightarrow{g\mapsto g(0)}\Z\too 0.
\end{align}
We define $c_{\p}$ as the image of $1\in \Z$ under the connecting homomorphism $\Z\to \HH^{1}(T_\p,C_c(T_{\p}/U_{T_{\p}},\Z))$.

\begin{Remark}\label{kindofproof}
If $\p$ is split, the group $T_{\p}/U_{T_\p}$ is a free abelian group of rank $1$.
The exact sequence \eqref{fundexactinv} is a projective resolution of the trivial $T_\p/U_{T_\p}$-module.
Therefore, if $\eta_\p$ is a generator of the free abelian group $\HH_{1}(T_\p/U_{T_\p},\Z)$ of rank $1$ we get
$$c_\p\cap \eta_\p=\pm1 \in \HH_{0}(T_\p/U_{T_\p},C_c(T_{\p}/U_{T_{\p}},\Z))\cong \Z.$$
\end{Remark}

The canonical pairing $$\mathcal{C}_c(\OO,\Z)^{S}\times C_c(T_\p/U_{T_\p},\Z)\too \mathcal{C}_c(\OO,\Z)^{S - \left\{\p\right\}}$$
induces a cap product pairing on (co)homology groups.
The following lemma essentially follows from Remark \ref{kindofproof}. 
\begin{Lemma}\label{fundclasses}
For every $\p\in S$ the equality $\vartheta^{S - \left\{\p\right\}}=\pm c_\p \cap \vartheta^{S}$ holds.
The sign only depends on the choice of the generators $\eta^{S}$ and $\eta^{S - \left\{\p\right\}}$.
\end{Lemma}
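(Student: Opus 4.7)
The plan is a direct chain-level computation via Shapiro's lemma, with two cases according to whether $\p$ is non-split or split in $E$, matching the two constructions of $c_\p$ given just before the lemma. In both cases I would take the fundamental domain $\mathcal{F}^{S - \{\p\}}$ to be a product $\mathcal{F}^S \times \mathcal{F}_\p$ for a suitable $\mathcal{F}_\p \subseteq T_\p/U_{T_\p}$ adapted to the passage from $\mathcal{U}_S^+$ to $\mathcal{U}_{S - \{\p\}}^+$, so that $\cf_{\mathcal{F}^{S - \{\p\}}}$ factors as $\cf_{\mathcal{F}^S} \otimes \cf_{\mathcal{F}_\p}$ and both sides of the asserted identity have explicit Shapiro representatives which can be compared directly.

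Suppose first that $\p$ is non-split, so that $T_\p$ is compact, $T_\p/U_{T_\p}$ is a finite group, and $c_\p$ is represented by the constant function $1 = \cf_{T_\p/U_{T_\p}}$. The cap product $c_\p \cap \vartheta^S$ is then the image of $\vartheta^S$ under the $T(F)$-equivariant map $\mathcal{C}_c(\OO, \Z)^S \to \mathcal{C}_c(\OO, \Z)^{S - \{\p\}}$, $f \mapsto f \otimes \cf_{T_\p/U_{T_\p}}$. Taking $\mathcal{F}_\p$ to be a system of coset representatives for the image $\Lambda_\p^+$ of $\mathcal{U}_S^+$ in $T_\p/U_{T_\p}$, one has $\cf_{T_\p/U_{T_\p}} = \sum_{\lambda \in \Lambda_\p^+} \lambda \cdot \cf_{\mathcal{F}_\p}$, which corresponds via the $\mathcal{U}_S^+$-module identification $C(T_\p/U_{T_\p}, \Z) \cong \cind_{\mathcal{U}_{S - \{\p\}}^+}^{\mathcal{U}_S^+} C(\mathcal{F}_\p, \Z)$ to the image of $\cf_{\mathcal{F}_\p}$ under the norm element. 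Substituting into the Shapiro identifications on both sides reduces the claim to checking that the natural map $\HH_{d+r}(\mathcal{U}_S^+, \Z) \to \HH_{d+r}(\mathcal{U}_{S - \{\p\}}^+, \Z)$ (induced on top homology by the norm $\Z \to \Z[\Lambda_\p^+]$, $1 \mapsto \sum_\lambda \lambda$) sends $\eta^S$ to $\pm \eta^{S - \{\p\}}$; since both $\mathcal{U}_S^+$ and $\mathcal{U}_{S - \{\p\}}^+$ have the same rank $d + r$, a direct calculation in the Koszul resolution (using that for a free abelian group $G$ of rank $n$ one has $\HH_n(G, M) = M^G$) confirms this on generators.

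Suppose now that $\p$ is split. The class $c_\p \in \HH^1(T_\p, C_c(T_\p/U_{T_\p}, \Z))$ is represented by the $1$-cocycle $g \mapsto \cf_{g \OO_{F_\p}} - \cf_{\OO_{F_\p}}$ coming from the lift $\cf_{\OO_{F_\p}} \in C_c(F_\p, \Z)$ of $1 \in \Z$ in \eqref{fundamentalexact}, and the cap product drops the degree by one, matching the drop in rank from $d + r$ to $d + r - 1$ when passing from $\mathcal{U}_S^+$ to $\mathcal{U}_{S-\{\p\}}^+$. I would choose a $\Z$-basis $u_1, \ldots, u_{d+r}$ of $\mathcal{U}_S^+$ such that $u_1, \ldots, u_{d+r-1}$ is a basis of $\mathcal{U}_{S - \{\p\}}^+$ (possible since the quotient $\mathcal{U}_S^+ / \mathcal{U}_{S - \{\p\}}^+$ is free cyclic), so that in the Koszul model $\eta^S = u_1 \wedge \cdots \wedge u_{d+r}$ and $\eta^{S - \{\p\}} = u_1 \wedge \cdots \wedge u_{d+r-1}$. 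The pullback of the cocycle to $\mathcal{U}_S^+$ vanishes on $u_1, \ldots, u_{d+r-1}$ (which lie in the kernel of $\mathcal{U}_S^+ \to T_\p/U_{T_\p}$) and sends $u_{d+r}$ to $\pm \cf_{\mathcal{F}_\p}$ for $\mathcal{F}_\p \subseteq T_\p/U_{T_\p} \cong \Z$ a suitable system of coset representatives for $\Lambda_\p^+$; the standard cap product formula on the Koszul resolution then yields $c_\p \cap (\eta^S \cap \cf_{\mathcal{F}^S}) = \pm \eta^{S - \{\p\}} \cap (\cf_{\mathcal{F}^S} \otimes \cf_{\mathcal{F}_\p}) = \pm \eta^{S - \{\p\}} \cap \cf_{\mathcal{F}^{S - \{\p\}}}$, which is $\pm \vartheta^{S - \{\p\}}$ after Shapiro.

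The main obstacle is careful sign bookkeeping — as the lemma itself permits, the overall sign depends on the chosen generators $\eta^S$ and $\eta^{S - \{\p\}}$, and the orientation conventions in the cap product pairing together with the choice of top homology generators for the free abelian groups $\mathcal{U}_S^+$ and $\mathcal{U}_{S - \{\p\}}^+$ each contribute $\pm$ signs. Once a fixed orientation convention is adopted, the verification is routine in both cases.
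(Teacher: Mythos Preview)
Your proposal is correct and matches what the paper has in mind: the paper omits the proof entirely, stating only that the lemma ``is a straightforward calculation,'' and your Shapiro-lemma computation split into the non-split and split cases is precisely such a calculation. The sign bookkeeping you flag is the only genuine subtlety, and the lemma's statement already absorbs it.
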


\subsection{Derivatives of local characters} \label{LocalChar}
In this section we fix a finite place $\p$ of $F$.
Let $A$ be a group and $l_\p\colon T_\p\to A$ a locally constant homomorphism.
We can view $l_\p$ as an element of $C^{0}(T_\p,A)$.
Since $l_\p$ is a group homomorphism the map $y\mapsto y.l_\p-l_\p$ is constant.
Thus, the image of $l_\p$ in $C^{0}(T_\p,A)/A$ is fixed by the $T_\p$-action.

If $\p$ is inert or ramified in $E$, we define 
\begin{align*}
 c_{l_{\p}}\in \HH^{0}(T_\p,C^{0}(T_\p,A)/A)
\end{align*}
to be the image of $l_\p$.

On the other hand, if $\p$ is split in $E$, we define
\begin{align*}
 c_{l_{\p}}\in \HH^{1}(T_\p,C_c^{0}(F_\p,A))
\end{align*}
to be the class given by the cocycle  
\begin{align} \label{zchip}
 z_{l_{\p}}(x)(y)=\cf_{x\mathcal{O}_{F,\p}}(y)\cdot l_{\p}(x) +((\cf_{\mathcal{O}_{F,\p}}-\cf_{x\mathcal{O}_{F,\p}})\cdot l_\p)(y)
\end{align}
for $x\in T_{\p}$ and $y\in F_{\p}$.

\begin{Remark}\label{loccomp}
For a prime $\p$, which is split in $E$, we consider the unique $T_\p$-equivariant homomorphism
\begin{align}\label{loccompmap}
\alpha_{\p}\colon C_c(T_\p/U_{T_\p},\Z)\too C_c(F_\p,\Z)
\end{align}
that sends $\cf_{U_\p}$ to $\cf_{\OO_{F_\p}}$.
The class $c_{\ord_{\p}}$ associated to the homomorphism $$\ord_{\p}\colon T_{\p}\cong F_{\p}^{\ast}\too \Z$$ is equal to the image of the class $c_\p$ under the homomorphism
$$\HH^{1}(T_{\p},C_c(T_\p/U_{T_\p},\Z))\xlongrightarrow{(\alpha_{\p})_{\ast}} \HH^{1}(T_{\p},C_c(F_\p,\Z)).$$
More generally, the class $c_{l_{\p}}$ can be constructed as the image of $l_{\p}$ under a boundary map $$\delta\colon \HH^{0}(T_\p,C^{0}(T_\p,A)/A)\too \HH^{1}(T_\p,C_c^{0}(F_\p,A)).$$
See Section 3.2 of \cite{DS} for more details.
\end{Remark}
We are mostly interested in the following situation.
We fix a ring $R$ and an ideal $\mathfrak{a} \subseteq R$.
We set $\overline{R}=R/\mathfrak{a}$ and similarly, we write $\overline{N}=N\otimes_{R}\overline{R}$ for every $R$-module $N$. 
Let $\chi_\p\colon T_\p\to R^{\ast}$ be a character.
Suppose we have given an ideal $\mathfrak{a}_\p \subseteq \mathfrak{a}$ such that $\chi_\p \equiv 1 \bmod \mathfrak{a}_\p$.
Then
\begin{align*}
 \dd\chi_\p\colon T_\p\too \overline{\mathfrak{a}_\p},\ x\mapstoo \chi_\p(x)-1 \bmod \mathfrak{a}\mathfrak{a}_\p.
\end{align*}
defines a group homomorphism, which yields a cohomology class $c_{\dd\chi_\p}$.

\subsection{Derivatives of global characters} \label{Vanishing}
As above we fix a ring $R$ and an ideal $\mathfrak{a} \subseteq R$.
Let $\chi\colon T(\A)/T(F)\to R^{\ast}$ be a locally constant character and
write $\overline{\chi}\colon T(\A)/T(F) \to \overline{R}^{\ast}$ for its reduction modulo $\mathfrak{a}$.
For a place $v$ of $F$ we denote by $\chi_{v}$ the local component of $\chi$ at $v$, i.e.~the composition $$\chi_{v}\colon T_v \hooklongrightarrow T(\A)  \xlongrightarrow{\chi} R^{\ast}.$$
Since the kernel of $\chi$ is open there exists a non-zero ideal $\mathfrak{m} \subseteq \mathcal{O}$ such that $\chi$ restricted to $U_T(\mathfrak{m})$ is trivial. The smallest such ideal is called the conductor of $\chi$. Similarly, for a finite place $\p$ of $F$ we define the conductor of $\chi_\p$ to be the $\p$-component of the conductor of $\chi$.
We will fix such an ideal $\m$ (not necessarily the conductor) in the following and view $\chi$ as an element of $\HH^{0}(T(F),\mathcal{C}(\mathfrak{m},R))$.

Suppose we have given a finite set $S$ of finite places of $F$ and ideals $\mathfrak{a}_\p \subseteq \mathfrak{a}$ for $\p\in S$ such that $\chi_\p\equiv 1 \bmod \mathfrak{a}_\p$ holds.
In this case, we can regard the restriction $\overline{\chi}^{S}$ of $\overline{\chi}$ to $T(\A^{S})$ as an element of $\HH^{0}(T(F),\mathcal{C}(\m,\overline{R})^{S})$.
Further, we want to take the Archimedean places into account.
Let $\overline{\chi}\sinfty$ be the restriction of $\overline{\chi}$ to $T(\A\sinfty)$
For every Archimedean place $v$ of $F$ which is split in $E$ we fix a character $\epsilon_v\colon T(F_v)/U_{T_v}\to \left\{\pm 1\right\}$ and an ideal $\mathfrak{a}_v \subseteq \mathfrak{a}$ with
$$\chi_v(-1)\equiv -\epsilon_v(-1) \bmod \mathfrak{a}_v.$$
Thus, $\psi_v:=1+(\chi_v\epsilon_v)(-1)$ is an element of $\mathfrak{a}_v$. 
If $v$ is non-split, we set $\psi_v=\epsilon_v=1$, and $\mathfrak{a}_v=R$.
Let us write $\epsilon=\prod_{v\in S_\infty}\epsilon_v\colon T_\infty\to \left\{\pm 1\right\}$.
An easy calculation shows that $\widetilde{\chi}^{S}:=\prod_{v\in S_\infty} \psi_v \cdot \overline{\chi}\sinfty$ defines an element of $\HH^{0}(T(F),\mathcal{C}(\m,\overline{\prod_{v\in S_\infty}\mathfrak{a}_v})\sinfty(\epsilon))$.

The $\epsilon$-isotypical projection
\begin{align*}
 C(T_{\infty}/U_{T_\infty},R)\too R(\epsilon),\ f\mapstoo \sum_{x\in T_\infty/U_{T_{\infty}}} \epsilon(x)f(x)
\end{align*}
yields a $T(\A)$-equivariant map
\begin{align}\label{sign}
 \mathcal{C}_{c}(\mathfrak{m},R)\too \mathcal{C}_{c}(\mathfrak{m},R)^{\infty}(\epsilon).
\end{align}

Let $r$ be the number of primes in $S$ which are split in $E$.
We define $\vartheta\sinfty$ to be the image of $\vartheta^{S}$ under the map
\begin{align*}
 \HH_{d+r}(T(F),\mathcal{C}_c(\OO,\Z)^{S})\too \HH_{d+r}(T(F),\mathcal{C}_c(\OO,\Z)\sinfty)
\end{align*}
induced by \eqref{sign} with $\epsilon=1$.
Furthermore, we define $$\overline{c_\chi}=\overline{c_{\chi}}(\mathfrak{m},S,\epsilon)\in \HH_{d+r}(T(F),\mathcal{C}_c(\m,\overline{\prod_{v\in S_\infty}\mathfrak{a}_v})\sinfty(\epsilon))$$ as the cap product of $\widetilde{\chi}^{S}$ with $\vartheta\sinfty$.

Next, we are going to attach a homology class $c_\chi$ to the character $\chi$ and compare it with the class $\overline{c_\chi}$ associated to its reduction.
For this, we need to consider a slight generalization of \eqref{funcspac}.
Let $S'$ be another (possibly empty) finite set of finite places of $F$ disjoint from $S$.
For an open subgroup $\widetilde{U} \subseteq U_T^{S\cup S^{\prime},\infty}$ we define
\begin{align*}
 \mathcal{C}_{c}(\widetilde{U},S,R)^{S',\infty} = \mathcal{C}_{c}(\widetilde{U},R)^{S \cup S', \infty} \otimes \hspace{-1em}\bigotimes_{\substack{\p \in S \\ \p\ \text{non-split}}}\hspace{-1em}C^{0}_{c}(T_{\p},R)/R\otimes \bigotimes_{\substack{\p \in S\\ \p\ \text{split}}}C^{0}_c\left(F_{\p},R\right).
\end{align*}
As before, we put $\mathcal{C}_{c}(\m,S,R)^{S',\infty}=\mathcal{C}_{c}(U_T(\m)^{S\cup S^{\prime},\infty},S,R)^{S',\infty}$ and drop $S'$ from the notation if it is the empty set.
Extension by zero at the split places together with the canonical projection at non-split places induces a map
\begin{align}\label{extension}
 \mathcal{C}_{c}(\mathfrak{m},R)^{\infty}\too \mathcal{C}_{c}(\mathfrak{m},S,R)^{\infty}.
\end{align}
Let $c_\chi=c_{\chi}(\mathfrak{m},S,\epsilon)$ denote the image of $\chi$ under the composition
\begin{align} \label{homclass}
 \begin{split}
 H^{0}(T(F),\mathcal{C}(\mathfrak{m},R)) &\xlongrightarrow{\cdot\cap \vartheta\ } \HH_{d}(T(F),\mathcal{C}_{c}(\mathfrak{m},R))\\
					 &\xlongrightarrow{\eqref{sign}} \HH_{d}(T(F), \mathcal{C}_{c}(\mathfrak{m},R)^{\infty}(\epsilon))\\
					 &\xlongrightarrow{\eqref{extension}} \HH_{d}(T(F), \mathcal{C}_{c}(\mathfrak{m},S,R)^{\infty}(\epsilon)).
 \end{split}
\end{align}

From now on we assume that $\mathfrak{a}\cdot \prod_{v\in S\cup S_\infty} \mathfrak{a}_v= 0$.
Hence, multiplication in $R$ induces a multilinear map
\begin{align*}
\mu\colon \overline{\mathfrak{a}_{\p_1}}\times\ldots\times \overline{\mathfrak{a}_{\p_s}}\times \overline{\prod_{v\in S_\infty}\mathfrak{a}_v} \too \prod_{v\in S\cup S_{\infty}}\mathfrak{a}_v\hooklongrightarrow R,
\end{align*}
where $S=\left\{\p_1,\ldots, \p_s\right\}$.
The next proposition can be proved along the same lines as Proposition 3.8 of \cite{DS}.

\begin{Proposition} \label{vanish1}
The following equality of homology classes holds: 
\begin{align*}
 c_\chi=\pm\ \mu_{\ast}((c_{\dd\chi_{\p_1}}\cup\ldots \cup c_{\dd\chi_{\p_{s}}})\cap \overline{c_\chi})
\end{align*}
In particular, $c_\chi=0$ if $\prod_{v\in S\cup S_{\infty}}\mathfrak{a}_v=0.$
\end{Proposition}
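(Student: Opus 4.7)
The plan is to reduce the proposition to a purely local computation at each prime $\p\in S$, by first decomposing the fundamental class $\vartheta$ in terms of the local classes $c_\p$ via Lemma \ref{fundclasses}, and then distributing $\chi$ multiplicatively across this decomposition.

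Concretely, iterating Lemma \ref{fundclasses} over $S=\{\p_1,\ldots,\p_s\}$ would yield, up to a global sign,
\[
\vartheta^{\infty} \;=\; \pm\,(c_{\p_1}\cup\cdots\cup c_{\p_s})\cap \vartheta\sinfty.
\]
Substituting this into the definition (\ref{homclass}) of $c_\chi$, using the standard compatibility between cup and cap products together with the multiplicativity of $\chi$, I would rewrite $c_\chi$ as $\pm$ the cap product of $(\chi_{\p_1}\cdot c_{\p_1})\cup\cdots\cup(\chi_{\p_s}\cdot c_{\p_s})$ against $\overline{c_\chi}$; the $\psi_v$-factors at the split Archimedean places appearing in $\widetilde{\chi}^S$ are absorbed into $\overline{c_\chi}$ by definition, and the reductions modulo $\mathfrak{a}$ and modulo the $\mathfrak{a}_\p$ take place after these manipulations.

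The core of the argument is a purely local comparison at each $\p\in S$, identifying the reduction of $\chi_\p\cdot c_\p$ with $c_{\dd\chi_\p}$. Writing $\chi_\p = 1 + (\chi_\p - 1)$, the contribution of the constant term $1\cdot c_\p$ maps, after multiplication against the other $\dd\chi_{\p_j}$'s and $\psi_v$'s and after applying $\mu$, into $\mathfrak{a}\cdot\prod_{v\in S\cup S_\infty}\mathfrak{a}_v=0$ and therefore drops out; what survives is $\chi_\p - 1\equiv \dd\chi_\p\bmod \mathfrak{a}\mathfrak{a}_\p$, producing exactly $c_{\dd\chi_\p}$. At non-split $\p$ this identification is immediate, since $c_\p$ is the constant function $1$ in degree zero; at split $\p$ I would invoke Remark \ref{loccomp}, which realises both $c_\p$ and $c_{\dd\chi_\p}$ as images under the same connecting homomorphism $\delta$ attached to the exact sequence (\ref{fundamentalexact}).

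The hardest part will be this local comparison at split primes: matching the explicit cocycle (\ref{zchip}) for $c_{\dd\chi_\p}$ with the chain-level cup product of $\chi_\p$ against $c_\p$ requires careful bookkeeping of representatives. However, this is precisely the computation carried out in the proof of \cite[Proposition~3.8]{DS} for the split torus $\mathbb{G}_m$, and it should transport to $T=E^\ast/F^\ast$ essentially verbatim; the new case of non-split (hence compact) $T_\p$ is in fact simpler, as no connecting homomorphism intervenes. The last assertion of the proposition is then immediate from the formula, since the image of $\mu$ is contained in $\prod_{v\in S\cup S_\infty}\mathfrak{a}_v$.
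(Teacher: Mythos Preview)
Your proposal is correct and matches the paper's approach exactly: the paper's own proof is a one-line reference to \cite[Proposition~3.8]{DS}, and you have sketched that argument while correctly noting that the new non-split case is in fact simpler. One small imprecision: the ``$1\cdot c_\p$'' contribution does not vanish because it lands in $\mathfrak{a}\cdot\prod_v\mathfrak{a}_v$ (it does not), but rather for structural reasons---at non-split $\p$ the target quotients out constants, and at split $\p$ the image of $c_\p$ under the extension map \eqref{extension} factors through the inclusion in \eqref{fundamentalexact} and hence dies in cohomology---but this is exactly the computation in \cite{DS} that your deferral already covers.
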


\section{Quaternionic Stickelberger elements}
In this section we study Stickelberger elements coming from cohomology classes of arithmetic subgroups of the multiplicative group of quaternion algebras.
Stickelberger elements are constructed by firstly, pulling back these classes via embeddings of the multiplicative group of $E$ into the one of the quaternion algebra and secondly, taking cap products with homology classes associated to the Artin reciprocity map.
As an immediate consequence of the results of Section \ref{first} we can bound their order of vanishing from below.

Let us fix a non-split quaternion algebra $B$ over $F$ such that
\begin{itemize}
 \item $E$ can be embedded into $B$, i.e.~all places of $F$ at which $B$ is non-split are non-split in $E$ as well and
 \item $B$ is non-split at all Archimedean places of $F$ which are non-split in $E$.
\end{itemize}
The set of finite places of $F$ at which $B$ is ramified will be denoted by $\ram(B)$.
We choose once and for all an embedding $\iota \colon E \hookrightarrow B$.
By the Skolem-Noether-Theorem there exists a $J \in B^{\ast}$, unique up to multiplication by an element of $E^{\ast}$, such that $$\iota(\tau(e))= J \iota(e) J^{-1}$$ holds for all $e\in E$.
Let us fix such an element $J \in B^{\ast}$.
We consider the reductive $F$-algebraic group $G=B^\ast/F^\ast$ and view $T$ as an algebraic subgroup of $G$ via the embedding $\iota$. 
In addition, we fix a maximal order $\mathcal{R}\subseteq B$ such that $\iota(\OO)\subseteq \mathcal{R}$.
For all primes $\p$ of $F$ we write $\mathcal{R}_\p\subseteq B_\p$ for the induced maximal order and $K_\p$ for the image of $\mathcal{R}_\p^{\ast}$ in $G_\p$.
Let $\p\in\ram(B)$ be a prime which is inert in $E$.
From the explicit description of the non-split quaternion algebra over a $p$-adic local field one gets that the element $J$ is a $E_\p^\ast$-multiple of a uniformizer of a ramified quadratic extension of $F_\p$.
Therefore, we have $J\notin K_\p$ in this case. 
\subsection{Local norm relations} \label{Local}
This section contains all local computations that we need to prove norm relations between Stickelberger elements of different moduli and functional equations for Stickelberger elements.
If the prime under consideration is split in $E$, explicit versions of the following computations are given in \cite{BG}.
Most local norm relations were already proven by Cornut and Vatsal in Section 6 of \cite{CV}.
 
We fix a finite place $\p$ of $F$ at which $B$ is split.
In particular, the group $G_\p$ is isomorphic to $\PGL_2(F_\p)$.
Let $\T_\p=(\V_\p,\E_\p)$ be the Bruhat-Tits tree of $G_\p$, i.e.~
\begin{itemize}
 \item $\V_\p$ is the set of maximal orders in $B_\p$ and
 \item there exists an oriented edge $e=(v,v')\in\E_\p$ between two vertices $v,v' \in \V_\p$ if and only if the intersection of the corresponding orders is an Eichler order of level $\p$.
\end{itemize}
Note that $(v,v') \in \E_\p$ if and only if $(v',v) \in \E_\p$.
In this situation we say that $v$ and $v'$ are neighbours and write $v \sim v'$.
Each vertex has $N(\p)+1$ neighbours.

For an integer $n\geq 0$ we define $\E_{\p,n}$ as the set of non-backtracking paths in $\T_\p$ of length $n$, i.e.~
\begin{align*}
 \E_{\p,n} = \left\{ (v_0,\dots,v_n) \in \V_\p^{n+1} \mid (v_i,v_{i+1}) \in \E_\p\ \mbox{and}\ v_i \neq v_{i+2}\ \mbox{for all}\ i \right\}.
\end{align*}
In particular, we have $\E_{\p,0}=\V_\p$ and $\E_{\p,1}=\E_\p$. The group $G_\p$ acts on $\E_{\p,n}$ via conjugation in each component.

Let $R$ be a ring and $N$ an $R$-module.
In the following we consider $\E_{\p,n}$ as a discrete topological space.
The Atkin-Lehner involution $W_{\p^{n}}$ on $C^{0}(\E_{\p,n},N)$ is given by interchanging the orientation, i.e. $$W_{\p^{n}}(\phi)(v_0,\dots,v_n) = \phi(v_n,\dots,v_0).$$
The Hecke operator 
\begin{align}\label{Heckeoperatoren}
 \mathbb{T}_\p \colon C^{0}(\E_{\p,n},N)	&\too C^{0}(\E_{\p,n},N)
\end{align}
is defined by
\begin{align*}
 \phi 				&\mapstoo \left((v_0,\dots,v_n) \mapstoo 
					      \sum\limits_{v_{n-1} \neq v \sim v_{n}} \phi(v_1,\dots,v_{n},v)\right).
\end{align*}
Note that, if $n=0$, the condition $v_{n-1}\neq v$ is empty.
For $(v_0,\dots,v_n) \in \E_{\p,n}$ we define
\begin{align*}
 \partial_{(v_0,\dots,v_n)} \colon C^{0}(\E_{\p,n},N) 	&\too\Dist(T_\p/\Stab_{T_\p}((v_0,\dots,v_n)),N)
\intertext{to be the $T_\p$-equivariant map given by}
 \phi					&\mapstoo \left(t \mapsto \phi(t.(v_0,\dots,v_n)) \right).
\end{align*}
Here we used the identification \eqref{discrete} of distribution and function spaces on the discrete space $T_\p/\Stab_{T_\p}((v_0,\dots,v_n))$.
For $v \in \V_\p$ let $l_\p(v)$ be the uniquely determined integer given by $$U_{T_\p}^{(l_\p(v))} = \Stab_{T_\p}(v).$$

\begin{Remark} \label{EichlerTree}
Let $\mathcal{R}_\p(\p^{n}) \subseteq B_\p$ be an Eichler order of level $\p^n$ contained in the fixed maximal order $\mathcal{R}_\p$.
We write $K_{\p}(\p^{n})$ for the image of $\mathcal{R}_\p(\p^{n})^{\ast}$ in $G_\p$.
There exists a unique vertex in $\V_\p$ fixed by $K_{\p}$ and thus, we get a canonical isomorphism $$C(G_\p/K_\p,N)\xlongrightarrow{\cong}C(\V_\p,N).$$
In the case $n\geq 1$ there is an up to orientation unique element in $\E_{\p,n}$ fixed by $K_{\p}(\p^{n})$.
Therefore, there are two natural isomorphisms $$C(G_\p/K_\p(\p^{n}),N)\xlongrightarrow{\cong}C(\E_{\p,n},N),$$ which are interchanged by the Atkin-Lehner involution.
\end{Remark}

We will construct a compatible sequence of elements in $\E_{\p,n}$.
Key to this is the following lemma, which is essentially Lemma 6.5 of \cite{CV}. 
\begin{Lemma} \label{neighbours}
Let $v \in \V_\p$ be a vertex of $\T_\p$.
 \begin{enumerate}[(i)]
    \item 	Let $l_\p(v) = 0$.
		\begin{itemize}
		 \item If $\p$ is split in $E$, there are exactly two neighbours $v'$ of $v$ such that $l_\p(v')=0.$
		       They are given by $\varpi_{\P} v$ and $\varpi_{\P}^\tau v$.
		 \item If $\p$ is ramified in $E$, there is exactly one neighbour $v'$ of $v$ such that $l_\p(v')=0.$
		       It is given by $\varpi_{\P} v$. 
		 \item If $\p$ is inert in $E$, there is no such neighbour.
		\end{itemize}
    \item	Let $l_\p(v) \geq 1$.
		Then there exists a unique neighbour $v'$ of $v$ with
		$$l_\p(v') = l_\p(v)-1.$$
    \item	In both cases, $(i)$ and $(ii)$, the remaining neighbours $v'$ of $v$ satisfy $$l_\p(v')=l_\p(v)+1.$$
		They are permuted faithfully and transitively by $U_{T_\p}^{(l_\p(v))}/U_{T_\p}^{(l_\p(v)+1)}$.
 \end{enumerate}
\end{Lemma}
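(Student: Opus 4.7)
The plan is to analyse the action of $T_\p$ on $\T_\p$ induced by the fixed embedding $\iota\colon E_\p\hookrightarrow B_\p$, identify its fixed set, and read off all three assertions from the geometry of the tree.

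The central structural step is the following description of the locus $\mathcal{F}_\p=\{v\in\V_\p\mid l_\p(v)=0\}$ of vertices stabilised by $U_{T_\p}^{(0)}$: it is a doubly infinite geodesic on which $T_\p/U_{T_\p}^{(0)}\cong\Z$ acts by translations in the split case; a single edge whose endpoints are swapped by $\varpi_\P$ (since $\varpi_\P^2\in F_\p^{\ast}\OO_{E_\p}^{\ast}$, so $T_\p/U_{T_\p}^{(0)}\cong\Z/2$) in the ramified case; and a single vertex, namely the one corresponding to the lattice $\OO_{E_\p}\subseteq E_\p$, in the inert case. I would then deduce from this description that $\Stab_{T_\p}(v)=U_{T_\p}^{(d(v,\mathcal{F}_\p))}$ for every $v\in\V_\p$, so that $l_\p(v)=d(v,\mathcal{F}_\p)$. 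In the split case this is essentially Lemma 6.5 of \cite{CV}; the non-split cases can be treated by the same method using explicit coordinates at the vertex stabilised by $K_\p$.

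Part (i) now simply enumerates the neighbours of $v$ that lie on $\mathcal{F}_\p$: two translates $\varpi_\P v$ and $\varpi_\P^\tau v$ in the split case; a single neighbour in the ramified case, since $\varpi_\P^\tau/\varpi_\P\in\OO_{E_\p}^{\ast}$ lies in $U_{T_\p}^{(0)}$ and therefore $\varpi_\P v=\varpi_\P^\tau v$; and none in the inert case. Part (ii) is the standard tree observation that the first vertex on the geodesic from $v$ to $\mathcal{F}_\p$ is the unique neighbour at distance $l_\p(v)-1$.

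For part (iii) the remaining neighbours lie at distance $l_\p(v)+1$ from $\mathcal{F}_\p$, so have $l_\p$-value $l_\p(v)+1$ and $T_\p$-stabiliser $U_{T_\p}^{(l_\p(v)+1)}$; this immediately gives faithfulness of the action of $U_{T_\p}^{(l_\p(v))}/U_{T_\p}^{(l_\p(v)+1)}$. For transitivity I would compare cardinalities: the orbit of any such neighbour has size $[U_{T_\p}^{(m)}:U_{T_\p}^{(m+1)}]$, and this index must match the number of further neighbours. For $m\geq 1$ the decomposition $(\OO_{F_\p}+\p^m\OO_{E_\p})^{\ast}=\OO_{F_\p}^{\ast}(1+\p^m\OO_{E_\p})$ yields index $N(\p)$, matching $N(\p)$ further neighbours; for $m=0$ the quotient reduces to $(\OO_{E_\p}/\p)^{\ast}/(\OO_{F_\p}/\p)^{\ast}$, which has order $N(\p)-1$, $N(\p)$ or $N(\p)+1$ in the split, ramified and inert cases respectively, each matching the count $N(\p)+1$ minus the contribution from $\mathcal{F}_\p$ in part (i). The main obstacle is really just the first step, namely pinning down the fixed set $\mathcal{F}_\p$ and verifying $l_\p(v)=d(v,\mathcal{F}_\p)$ uniformly across the three cases; once that is in place, parts (i)--(iii) reduce to elementary tree combinatorics combined with the standard filtration of local unit groups.
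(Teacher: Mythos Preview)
Your proposal is correct, and in fact it is more detailed than what the paper offers: the paper gives no proof at all, merely remarking that the statement ``is essentially Lemma~6.5 of \cite{CV}''. Your strategy---identifying the $U_{T_\p}^{(0)}$-fixed locus $\mathcal{F}_\p$ as an apartment, half-edge, or single vertex according to the splitting behaviour of $\p$, establishing $l_\p(v)=d(v,\mathcal{F}_\p)$, and then reading off (i)--(iii) from tree combinatorics together with the index computation $[U_{T_\p}^{(m)}:U_{T_\p}^{(m+1)}]$---is exactly the argument of Cornut--Vatsal that the paper is invoking, so there is nothing to add.
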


Let $\E_{\p,\infty}=\varprojlim_{n} \E_{\p,n}$ be the set of infinite, non-backtracking sequences of adjacent vertices.
Let $w_0$ be the vertex corresponding to $\mathcal{R}_\p$ or, equivalently, the unique vertex fixed by the action of $K_\p$.
By our assumptions we have $l_\p( w_0)=0$.
Using Lemma \ref{neighbours} (iii) we consecutively choose vertices $ w_i$ such that $ w_i\sim w_{i-1}$ and $l_\p( w_i)=i$ for all $i\geq 1$.
We set $ w_{\infty}=( w_0, w_1, w_2,\dots) \in \E_{\p,\infty}.$
Further, we define $ w_{-1}=\varpi_{\P}  w_0$ if $\p$ is ramified in $E$.
If $\p$ splits in $E$, we set $ w_{-j}=\varpi_{\P}^{-j}  w_0$ for every integer $j > 0$.

It will be convenient to introduce the following notation:
\begin{align*}
 \eta_\p = \begin{cases}
            0	    &\mbox{if $\p$ is inert in $E$,} \\
            -1	    &\mbox{if $\p$ is ramified in $E$,} \\
            -\infty &\mbox{if $\p$ is split in $E$.}
	   \end{cases}
\end{align*}
By definition we have
\begin{align} \label{StabFormel}
 \Stab_{T_\p}( w_{m-n},\dots, w_m) = U_{T_\p}^{(m)}
\end{align}
for all integers $m,n \geq 0$ such that $m-n \geq \eta_\p$.

We define $$\partial_m:= \partial_{( w_{m-n},\dots, w_m)} \colon C^{0}(\E_{\p,n},N) \to \Dist(T_\p/U_{T_\p}^{(m)},N)$$ for all integers $m,n$ as above.

For $m \geq 0$, the projection $\pi_m \colon T_\p/U_{T_\p}^{(m+1)} \to T_\p/U_{T_\p}^{(m)}$ yields maps
\begin{align*}
 (\pi_m)^\ast \colon \Dist(T_\p/U_{T_\p}^{(m)},N) 	& \too \Dist(T_\p/U_{T_\p}^{(m+1)},N))\\
					    f	& \mapstoo f \circ \pi_m
\end{align*}
and
\begin{align*}
 (\pi_m)_\ast \colon \Dist(T_\p/U_{T_\p}^{(m+1)},N) & \too \Dist(T_\p/U_{T_\p}^{(m)},N)\\
					      f	 & \mapstoo \sum_{t\in U_{T_\p}^{(m)}/U_{T_\p}^{(m+1)}} t.f.
\end{align*}

The proof of the following lemma is an easy calculation.
Most of the cases were already dealt with by Cornut and Vatsal in Section 6 of \cite{CV}.
\begin{Lemma}\label{localnorm}
Let $n\geq 0$ be an integer.
The following formulas hold for all $\phi \in C^0(\E_{\p,n},N)$:
\begin{enumerate}[(i)]
 \item For $m \geq \max\left\{1,n+\eta_\p+1\right\}$ the equality $$(\partial_m \circ \mathbb{T}_\p)(\phi) = ((\pi_m)_\ast \circ \partial_{m+1})(\phi) + \cf_{\p}(\p^n) ((\pi_{m-1})^\ast \circ \partial_{m-1})(\phi)$$ holds with $$\cf_{\p}(\p^n)=\begin{cases}1& \mbox{if $n=0$,}  \\ 0 & \mbox{else.}\end{cases}$$
 \item If $n+\eta_\p\leq 0$, the following equality holds:
       $$(\partial_0 \circ \mathbb{T}_\p)(\phi) = ((\pi_0)_\ast \circ \partial_{1})(\phi) + (\ast),$$ 
       where
       \begin{align*}
	(\ast) = \begin{cases}
		  0 												& \mbox{if $\p$ is inert in $E$,} \\
		  \cf_{\p}(\p^n)\varpi_{\P} \partial_0(\phi) 						& \mbox{if $\p$ is ramified in $E$,} \\
		  \cf_{\p}(\p^n)\varpi_{\P} \partial_0(\phi) + (\varpi_{\P})\inv \partial_0(\phi) 	& \mbox{if $\p$ is split in $E$.}
		 \end{cases}
       \end{align*}
 \item If $\p$ is inert in $E$ and $n=1$, then $$(\partial_1 \circ \mathbb{T}_\p \circ W_\p)(\phi)+\partial_1(\phi)	= ((\pi_0)_\ast \circ \partial_1)(\phi)$$ holds.
\end{enumerate}
\end{Lemma}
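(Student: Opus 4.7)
The plan is to prove all three parts by a direct calculation, unfolding the definitions of $\partial_m$, $\mathbb{T}_\p$ and $(\pi_m)_\ast$ on the element $t.(w_{m-n},\dots,w_m)$ for $t\in T_\p$ and classifying the neighbours of $t.w_m$ via Lemma \ref{neighbours}. A preliminary observation is that, since $T_\p$ is abelian, $\Stab_{T_\p}(t.w_m) = t\,U_{T_\p}^{(m)}\,t^{-1} = U_{T_\p}^{(m)}$, so $l_\p(t.w_m) = l_\p(w_m) = m$, and Lemma \ref{neighbours} therefore applies to $t.w_m$ with the same classification it gives to $w_m$ itself.

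For part (i), I would expand $(\partial_m \mathbb{T}_\p \phi)(t)$ as the sum, over neighbours $v$ of $t.w_m$ with $v\neq t.w_{m-1}$, of $\phi(t.w_{m-n+1},\dots,t.w_m,v)$. Lemma \ref{neighbours}(iii) identifies the $N(\p)$ level-$(m+1)$ neighbours of $t.w_m$ as the set $\{ts.w_{m+1}\mid s\in U_{T_\p}^{(m)}/U_{T_\p}^{(m+1)}\}$; collecting them and unwinding the definitions of $(\pi_m)_\ast$ and $\partial_{m+1}$ gives $((\pi_m)_\ast \partial_{m+1}\phi)(t)$. The hypothesis $m\geq \max\{n+\eta_\p,1\}$ ensures both that the extended path $(w_{m-n},\dots,w_{m+1})$ sits inside the fixed sequence and that the unique level-$(m-1)$ neighbour $t.w_{m-1}$ is indeed excluded by the non-backtracking restriction whenever $n\geq 1$. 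When $n=0$ there is no previous vertex to exclude, so $t.w_{m-1}$ contributes the additional term $\phi(t.w_{m-1}) = ((\pi_{m-1})^\ast \partial_{m-1}\phi)(t)$.

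For part (ii) the same analysis at $m=0$ still yields $((\pi_0)_\ast \partial_1\phi)(t)$ from the level-$1$ neighbours, so the correction term $(\ast)$ must collect those level-$0$ neighbours of $t.w_0$ that survive the non-backtracking restriction. These are enumerated by Lemma \ref{neighbours}(i) together with the conventions defining $w_{-1}$: none in the inert case, the single vertex $t.w_{-1}$ in the ramified case (present precisely when $n=0$, whence the factor $\cf_\p(\p^n)$), and the pair $\{t\varpi_\P.w_0,\ t.w_{-1}\}$ in the split case (the second of which is again excluded precisely when $n\geq 1$). Rewriting each surviving vertex as $\varpi_\P^{\pm 1}$ applied to the base path and transferring the action over to $\partial_0\phi$ reproduces the claimed expression for $(\ast)$. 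Part (iii) is parallel but with an extra twist: first $W_\p$ reverses the path $(w_0,w_1)$ into $(w_1,w_0)$, and then the Hecke operator extends the reversed path; Lemma \ref{neighbours}(ii)--(iii) split its neighbours into a unique ``backward'' one (which contributes $\partial_1\phi(t)$, to be moved to the left-hand side) and $N(\p)$ level-$2$ neighbours (which after collecting produce $((\pi_0)_\ast\partial_1\phi)(t)$).

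I expect the main obstacle to be pure bookkeeping: matching each neighbour enumerated by the Hecke operator with a unique translate of the chosen base path in the fixed sequence, and --- especially in the split subcase of (ii) --- inserting the correct sign of exponent on $\varpi_\P$ so that the transferred action on $\partial_0\phi$ agrees with the statement. There should be no substantive mathematical difficulty, only the need to stay uniformly consistent with the $T_\p$-action convention on distributions under which $(\varpi_\P.f)(t) = f(t\varpi_\P)$.
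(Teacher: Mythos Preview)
Your approach is exactly the paper's: it offers no proof beyond ``an easy calculation'' with a reference to Cornut--Vatsal, and direct unfolding via Lemma~\ref{neighbours} is what is meant. Your treatment of (i) and (ii) is correct in substance; the action convention you quote, $(\varpi_\P.f)(t)=f(t\varpi_\P)$, is actually inverted relative to the one the signs in the split case of (ii) require (the surviving level-$0$ neighbour $t\varpi_\P.w_0$ for $n\ge 1$ contributes $\partial_0\phi(t\varpi_\P)$, which has to equal $(\varpi_\P^{-1}\partial_0\phi)(t)$), but you already flag this as a bookkeeping point.

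Your sketch of (iii), however, is confused in its details. The picture ``reverse the base path to $(w_1,w_0)$, then let the Hecke operator extend it at the end'' describes $\partial_{(w_1,w_0)}\circ\mathbb{T}_\p=\partial_1\circ W_\p\circ\mathbb{T}_\p$, not $\partial_1\circ\mathbb{T}_\p\circ W_\p$ as written; these differ because $\mathbb{T}_\p$ and $W_\p$ do not commute on $C^0(\E_{\p,1},N)$. More importantly, once you are enumerating neighbours of $t.w_0$ you must use Lemma~\ref{neighbours}(i), not (ii)--(iii): in the inert case $l_\p(t.w_0)=0$ and \emph{all} $N(\p)+1$ neighbours have level $1$ --- there are no level-$2$ neighbours. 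The identity then drops out because $U_{T_\p}^{(0)}/U_{T_\p}^{(1)}$ permutes these level-$1$ neighbours simply transitively, so $\sum_{v\sim t.w_0}\phi(t.w_0,v)=((\pi_0)_\ast\partial_1\phi)(t)$; the Hecke sum extending $(t.w_1,t.w_0)$ omits exactly the single term $v=t.w_1$, whose contribution $\phi(t.w_0,t.w_1)=\partial_1\phi(t)$ is what gets added back on the left. (If instead you unfold $\partial_1\circ\mathbb{T}_\p\circ W_\p$ literally, you are summing over neighbours of $t.w_1$; then there \emph{are} $N(\p)$ level-$2$ neighbours, but the ``backward'' vertex $t.w_0$ is excluded by the Hecke condition rather than contributing, and the level-$2$ terms $\phi(ts'.w_2,t.w_1)$ with $s'\in U_{T_\p}^{(1)}/U_{T_\p}^{(2)}$ do not assemble into $(\pi_0)_\ast\partial_1\phi$.)
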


\begin{Remark}
Let $m\geq 1$ and $n\geq 0$ be integers such that $m-n\geq \eta_p$.
 The only cases where we do not have a formula involving $(\pi_{m-1})_\ast \circ \partial_{m}$ are the following: $n\geq 2$ and either $\p$ is inert in $E$ and $m=n$ or $\p$ is ramified and $m=n-1$.
\end{Remark}

Let us denote by
$$\invol \colon \Dist(T_\p/U_{T_\p^{(m)}},N) \too \Dist(T_\p/U_{T_\p^{(m)}},N)$$
the map induced by inversion. 
The following lemma is the main ingredient for proving a functional equation for Stickelberger elements.
\begin{Lemma}\label{localfunceq}
Let $n \geq 0$ be an integer.
\begin{enumerate}[(i)]
 \item Assume $n \leq -\eta_\p$.
       Then for all $\phi \in C^0(\E_{\p,n},N)$ the equality $$(\partial_0 \circ W_{\p^n})(\phi) = (\invol \circ \partial_0)(J \phi)$$ holds up to multiplication by an element of $T_\p$.
 \item Assume $m \geq n$.
       Then for all $\phi \in C^0(\E_{\p,n},N)$ the equality $$\partial_m(\phi) = (\invol \circ \partial_m)(J \phi)$$ holds up to multiplication by an element of $T_\p$.
\end{enumerate}
\end{Lemma}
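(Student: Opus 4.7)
The plan is to unfold the defining expressions for $\partial_m$, $W_{\p^n}$ and $\invol$ together with the $G_\p$-action on $C^{0}(\E_{\p,n},N)$, then to translate both identities into geometric statements about how $J$ acts on specific paths in $\T_\p$ modulo the action of $T_\p$, and finally to verify those statements by a case analysis on the splitting behaviour of $\p$ in $E$. Using the relation $JtJ\inv=t\inv$ in $G_\p$ (valid for all $t\in T_\p$ since $J$ realises the nontrivial Galois automorphism of $E_\p/F_\p$) together with $J\inv=J$ in $G_\p$, a direct manipulation shows that (i) is equivalent to the existence of some $s\in T_\p$ with $J.w_{-k}=s.w_{k-n}$ for all $k=0,\dots,n$ (i.e.\ $J$ sends the path $(w_0,w_{-1},\dots,w_{-n})$ to a $T_\p$-translate of the reversed path), and that (ii) is equivalent to $J.(w_0,w_1,\dots,w_m)$ lying in the $T_\p$-orbit of $(w_0,w_1,\dots,w_m)$ (restricting to the subpath from index $m-n$ to $m$ yields the claim for any $m\geq n$). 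By Skolem--Noether, $J$ is determined only up to multiplication by $T_\p$, and this freedom is compatible with the wording of the lemma.

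For part (ii), the crucial observation is that $J$ preserves the level function $l_\p$ on $\V_\p$, since $JU_{T_\p}^{(i)}J\inv=U_{T_\p}^{(i)}$ by inversion on $T_\p$. After replacing $J$ by a suitable $tJ$ with $t\in T_\p$ we may assume $J.w_0=w_0$: this is automatic in the inert case (where $w_0$ is the unique $T_\p$-fixed vertex), requires at most one adjustment by $\varpi_\P$ in the ramified case (where $\{w_0,w_{-1}\}$ forms a single $T_\p$-orbit of $T_\p$-fixed vertices), and holds for the natural integral representative of $J$ in the split case. Once $J.w_0=w_0$, the image $J.(w_0,\dots,w_m)$ is a non-backtracking path starting at $w_0$ with $l_\p$-profile $(0,1,\dots,m)$; iterating part (iii) of Lemma~\ref{neighbours} one sees that the total number of such paths equals $\prod_{i=0}^{m-1}[U_{T_\p}^{(i)}:U_{T_\p}^{(i+1)}]=[\Stab_{T_\p}(w_0):U_{T_\p}^{(m)}]$, which is exactly the size of the $\Stab_{T_\p}(w_0)$-orbit of $(w_0,\dots,w_m)$. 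Hence this stabiliser acts transitively on such paths, providing the required $T_\p$-translate.

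For part (i) the path $(w_0,w_{-1},\dots,w_{-n})$ is entirely contained in the $T_\p$-fixed subtree, on which $J$ induces the nontrivial element of the Weyl group $N_{G_\p}(T_\p)/T_\p\cong\Z/2$. The case $n=0$ is trivial. In the ramified case the only remaining possibility is $n=1$; here the fixed subtree is the edge $\{w_0,w_{-1}\}$, and, depending on whether $J$ fixes the endpoints or swaps them, one takes $s=\varpi_\P\inv$ or $s=1$, using $\varpi_\P.w_0=w_{-1}$ together with $\varpi_\P^2\in U_{T_\p}^{(0)}$. In the split case the fixed subtree is the apartment $\{\varpi_\P^k w_0\}_{k\in\Z}$; the identity $J\varpi_\P^{-k}J\inv=\varpi_\P^k$ in $T_\p$ combined with $J.w_0=w_0$ yields $J.w_{-k}=\varpi_\P^k w_0$, and $s=\varpi_\P^n\in T_\p$ realises the required translation. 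The main obstacle is the bookkeeping in the ramified case, where the two subcases of $J$'s action on the fixed edge must be handled in parallel; every other step is a direct combinatorial verification on the tree.
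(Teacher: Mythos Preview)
Your proof is correct and follows essentially the same route as the paper. Both arguments reduce the two identities to the geometric statements that $J^{-1}.(w_{-n},\dots,w_0)$ is a $T_\p$-translate of $(w_0,\dots,w_{-n})$ and that $J^{-1}.(w_{m-n},\dots,w_m)$ lies in the $T_\p$-orbit of $(w_{m-n},\dots,w_m)$, and then verify these using the structure of the $T_\p$-action on $\T_\p$ encoded in Lemma~\ref{neighbours}. The paper carries out the computation directly on the level of distributions (writing $J^{-1}w_0=\varpi_\P^k w_0$ and manipulating), while you first isolate the geometric statement and then prove it; for part~(ii) you supply an explicit transitivity argument via a path count where the paper simply appeals to Lemma~\ref{neighbours}. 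One minor point: your remark that ``$J.w_0=w_0$ holds for the natural integral representative of $J$ in the split case'' presupposes a specific splitting of $B_\p$ that the paper does not fix, but since you correctly observe that $J$ may be adjusted by $T_\p$ without affecting the conclusion, this is harmless.
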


\begin{proof}
To prove (i), note that for $t \in T_\p/U_{T_\p}$ we have
\begin{align*}
 \invol (\partial_0(J\phi))(t) &= (J\phi)(t\inv( w_{-n},\dots, w_1, w_0)) \\
				     &= \phi(J\inv t\inv( w_{-n},\dots, w_1, w_0)) \\
				     &= \phi(tJ\inv( w_{-n},\dots, w_1, w_0)) \\
				     &= \phi(t(\varpi_{\P}^n J\inv w_0,\dots,\varpi_{\P} J\inv w_0,J\inv w_0)).
\end{align*}  
Since $t' J\inv  w_0 = J\inv {t'}\inv  w_0 = J\inv  w_0$ holds for all $t' \in U_{T_\p}$ it follows from Lemma \ref{neighbours} that $J\inv  w_0 = \varpi_\P^k  w_0$ for some $k \in \Z$.
This leads to
\begin{align*}
 \invol(\partial_0(J\phi))(t)   &= \phi(t(\varpi_{\P}^n J\inv w_0,\dots,\varpi_{\P} J\inv w_0,J\inv w_0)) \\
				&= \phi(t(\varpi_{\P}^{k+n}  w_0,\dots,\varpi_{\P}^{k-1}  w_0, \varpi_{\P}^k w_0))
\intertext{and we get}
 \varpi_\P^{-k-n} \invol(\partial_0(J\phi))(t)   	&= \phi(t( w_0,\dots,\varpi_{\P}^{-n+1}  w_0, \varpi_{\P}^{-n} w_0)) \\
							&= (W_\p^n \phi) (t ( w_{-n},\dots, w_1, w_0)).
\end{align*}

Claim (ii) follows by a similar calculation as in the first part using that, by Lemma \ref{neighbours}, there exists an element $x \in T_\p$ such that
\begin{align*}
 x(J\inv w_{m-n},\dots,J\inv w_m) = ( w_{m-n},\dots, w_m)
\end{align*}
holds.
\end{proof}
\subsection{Ends and the Steinberg representation} \label{Steinberg}
We will give a quick review of the theory of ends of the Bruhat-Tits tree.
By realizing the Steinberg representation as a space of functions on the set of ends, we construct a map $\delta_\p^{\ast}$ from the dual of the Steinberg representation to the space of distributions on the local torus, which is compatible with the maps $\partial_{m}$ for $m\geq 1$.

We say that two elements $(v_i)_{i \geq 0}$ and $(v'_i)_{i \geq 0}$ in $\E_{\p,\infty}$ are equivalent if there exist integers $N,N' \geq 0$ such that $v_{N+i}=v'_{N'+i}$ for all $i \geq 0$.
An end in $\T_\p$ is defined as an equivalence class of elements in $\E_{\p,\infty}$.
The set of ends is denoted by $\Ends$.
To an edge $e \in \E_\p$ we assign the set $V(e)$ of ends that have a representative containing $e$.
The sets $V(e)$ form a basis of a topology on $\Ends$, which turns $\Ends$ into a compact space.
The natural action of $G_\p$ on $\E_{\p,n}$ extends to an action on $\Ends$.

Let $\mathfrak{F} \subseteq \Ends$ be the set of fix points under the action of $T_\p$.
As a consequence of Lemma \ref{neighbours} we see that $T_\p$ acts simply transitively on the complement of $\mathfrak{F}$.
Hence, choosing a base point $\left[ v_\infty \right]$ in the complement yields a homeomorphism $\kappa_{[v_\infty]} \colon T_\p \to \Ends-\mathfrak{F}$ via $t \mapsto t[v_\infty]$.
In the following we will choose the class of $w_\infty$ as our base point and write $\kappa=\kappa_{[w_\infty]}$.

\begin{Remark}
 The set $\mathfrak{F}$ is non-zero only in the split case, where it consists of two elements given as follows:
 Clearly, the equivalence classes of the elements $$o_\P=( w_0,\varpi_{\P}  w_0,\varpi_{\P}^2  w_0,\dots)\ \mbox{and}\ o_{{\P}^{\tau}}=( w_0,\varpi_\P^\tau  w_0,(\varpi_\P^\tau)^2  w_0,\dots)$$ are fixed by $T_\p$.
 Using Lemma \ref{neighbours} one can show that $\mathfrak{F}=\{[o_\P],[o_{\P^{\tau}}]\}$ holds.
 In particular, the choice of the prime $\P$ lying above $\p$ (and hence, the choice of the vertices $ w_i$ for $i\leq -1$) is equivalent to the choice of the element $[o_\P]$ of $\mathfrak{F}$. 
\end{Remark}

We define the Steinberg representation $\St_\p$ to be the space of locally constant $\Z$-valued functions on $\Ends$ modulo constant functions, i.e.~$\St_\p=C_c^{0}(\Ends,\Z)/\Z$.
The $G_\p$-action on $\Ends$ extends to an action on $\St_\p$ via $(\gamma.\varphi)(\left[v_\infty\right])=\varphi(\gamma\inv \left[v_\infty\right])$ for $\gamma \in G_\p$, $\varphi \in \St_\p$ and $\left[v_\infty\right]\in \Ends$.
The open embedding $\kappa \colon T_\p \into \Ends$ induces a $T_\p$-equivariant map $$\delta_\p \colon C_c^{0}(T_\p,\Z) \too \St_\p$$ and thus, by dualizing we get a map
\begin{align*}
\delta_\p^\ast \colon \Hom(\St_\p,N) \too \Dist(T_\p,N).
\end{align*}
In the split case we can extend $\kappa$ to a map from $F_\p$ to $\Ends$ by mapping $0$ to $o_\P$.
Thus, we can extend $\delta_\p$ to a map 
\begin{align}\label{deltasp}
 \delta_\p \colon C_c^{0}(F_\p,\Z) \too \St_\p, 
\end{align}
which in turn induces a $T_\p$-equivariant map
\begin{align*}
 \delta_\p^\ast \colon \Hom(\St_\p,N) \too \Dist(F_\p,N).
\end{align*}
If $\p$ is non-split, the image of $\kappa$ is equal to $\Ends$.
Therefore, $\delta_\p$ descends to a map
\begin{align}\label{deltansp}
 \delta_\p \colon C_c^{0}(F_\p,\Z)/\Z \too \St_\p
\end{align}
and thus, we have
\begin{align*}
 \delta_\p^\ast \colon \Hom(\St_\p,N) \too \Dist_0(T_\p,N) \subseteq \Dist(T_\p,N).
\end{align*}

Dualizing the canonical map $\E_{\p,1} \to \St_\p$ given by $e \mapsto \cf_{V(e)}$ yields the $G_\p$-equivariant evaluation map
\begin{align}\label{localev}
 \ev_\p \colon \Hom(\St_\p,N) \too C^0(\E_{\p,1},N).
\end{align}
Further, there is the natural map $$\Dist(T_\p,N) \too \Dist(T_\p/U_{T_\p}^{(m)},N)$$ induced by the projection $T_\p \to T_\p/U_{T_\p}^{(m)}$.
By definition we have
\begin{align}\label{compreason}
V(( w_{m-1}, w_m))=\kappa(U_{T_\p}^{(m)})
\end{align}
for all $m\geq 1$ and, if $\p$ is split in $E$, we also have
$$V(( w_{-1}, w_0))=\kappa(\OO_{F_\p}).$$
From this, one easily gets
\begin{Lemma}\label{comp}
\begin{enumerate}[(i)]\thmenumhspace
\item \label{comp1} Let $m \geq 1$ be an integer.
The following diagram is commutative:
\begin{center}
\begin{tikzpicture}
    \path 	(0,0) 	node[name=A]{$\Hom(\St_\p,N)$}
		(5,0) 	node[name=B]{$C^0(\E_{\p,1},N)$}
		(0,-2) 	node[name=C]{$\Dist(T_\p,N)$}
		(5,-2) 	node[name=D]{$\Dist(T_\p/U_{T_\p}^{(m)},N)$};
    \draw[->] (A) -- (B) node[midway, above]{$\ev_\p$};
    \draw[->] (A) -- (C) node[midway, left]{$\delta_\p^\ast$};
    \draw[->] (B) -- (D) node[midway, right]{$\partial_m$};
    \draw[->] (C) -- (D) node[midway, below]{};
\end{tikzpicture} 
\end{center}
\item\label{comp2} Suppose that $\p$ is split in $E$.
Let
$$\alpha_\p^{\ast}\colon \Dist(F_\p,N)\too \Dist(T_\p/U_{T_\p},N)$$
be the dual of the map \eqref{loccompmap}.
Then the following diagram is commutative:
\begin{center}
\begin{tikzpicture}
    \path 	(0,0) 	node[name=A]{$\Hom(\St_\p,N)$}
		(5,0) 	node[name=B]{$C^0(\E_{\p,1},N)$}
		(0,-2) 	node[name=C]{$\Dist(F_\p,N)$}
		(5,-2) 	node[name=D]{$\Dist(T_\p/U_{T_\p},N)$};
    \draw[->] (A) -- (B) node[midway, above]{$\ev_\p$};
    \draw[->] (A) -- (C) node[midway, left]{$\delta_\p^\ast$};
    \draw[->] (B) -- (D) node[midway, right]{$\partial_0$};
    \draw[->] (C) -- (D) node[midway, above]{$\alpha_\p^{\ast}$};
\end{tikzpicture} 
\end{center}
\end{enumerate}
\end{Lemma}

There is also a twisted version of the above constructions if $\p$ is inert in $E$.
Let $\nr\colon B_\p^{\ast}\to F_\p^{\ast}$ denote the reduced norm.
The character $$\chi_{-1}\colon B_\p^{\ast}\too \left\{\pm 1\right\},\ g\mapstoo (-1)^{\ord_{\p}(\nr(g))}$$ is trivial on the center and thus, descents to a character on $G_\p$.
The twisted Steinberg representation is defined by
\begin{align*}
\TSt_\p=\St_\p(\chi_{-1}).
\end{align*}
Since $\p$ is inert in $E$ we have $\ord_{\p}(\nr(t)) \equiv 0 \bmod 2$ for all $t\in T_{\p}$.
Therefore, the map 
\begin{align}\label{deltatw}
\Tdelta\colon C^{0}(T_\p,\Z)/\Z\too \TSt_\p,\ f\mapstoo \delta_\p(f) \otimes 1
\end{align}
is $T_{\p}$-equivariant.

There is also a ($G_{\p}$-equivariant) twisted evaluation map
\begin{align}\label{twistlocalev}
\Tev\colon \Hom(\TSt_\p,N) \too C^0(\E_{\p,1},N).
\end{align}
It is given by dualizing the map $$\E_{\p,1} \too \TSt_\p,\ e \mapstoo \chi_{-1}(g_e)\cdot \cf_{V(e)}\otimes 1,$$ where $g_e\in G_\p$ is any element such that $g_e.( w_0, w_1)=e$.
Again, using \eqref{compreason} we get the following
\begin{Lemma}\label{comptwist}
Let $m \geq 1$ be an integer and let $\p$ be a prime, which is inert in $E$. The following diagram is commutative:
\begin{center}
\begin{tikzpicture}
    \path 	(0,0) 	node[name=A]{$\Hom(\TSt_\p,N)$}
		(5,0) 	node[name=B]{$C^0(\E_{\p,1},N)$}
		(0,-2) 	node[name=C]{$\Dist_0(T_\p,N)$}
		(5,-2) 	node[name=D]{$\Dist(T_\p/U_{T_\p}^{(m)},N)$};
    \draw[->] (A) -- (B) node[midway, above]{$\Tev$};
    \draw[->] (A) -- (C) node[midway, left]{$(\Tdelta)^\ast$};
    \draw[->] (B) -- (D) node[midway, right]{$(-1)^{m+1}\partial_m$};
    \draw[->] (C) -- (D) node[midway, below]{};
\end{tikzpicture} 
\end{center}
\end{Lemma}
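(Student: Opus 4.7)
The plan is to verify the diagram pointwise: fix $\phi \in \Hom(\TSt_\p,N)$ and $t \in T_\p/U_{T_\p}^{(m)}$, and show that both composites yield the same value in $N$ when evaluated at the characteristic function $\cf_{tU_{T_\p}^{(m)}}$. Unwinding definitions, the lower-left composite produces $\phi(\delta_\p(\cf_{tU_{T_\p}^{(m)}}) \otimes 1) = \phi(\cf_{\kappa(tU_{T_\p}^{(m)})} \otimes 1)$, using that $\Tdelta = \delta_\p \otimes 1$ and that $\kappa$ is a homeomorphism in the inert case. Since $\chi_{-1}$ is trivial on $T_\p$ (because $\ord_\p(\nr(\iota(e))) = \ord_\p(N_{E_\p/F_\p}(e))$ is even when $\p$ is inert), there is no sign ambiguity here. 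For the upper-right composite, by definition of $\Tev$ we must compute $\phi(\chi_{-1}(g_e)\cf_{U(e)}\otimes 1)$ for $e = t.(w_{m-1},w_m)$ and any $g_e \in G_\p$ with $g_e.(w_0,w_1) = e$.

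The main step is then to pin down $\chi_{-1}(g_e)$. Write $g_e = th$ where $h \in G_\p$ satisfies $h.(w_0,w_1) = (w_{m-1},w_m)$; such an $h$ exists because $G_\p$ acts transitively on oriented edges of $\T_\p$. Since $\chi_{-1}(t) = 1$, we get $\chi_{-1}(g_e) = \chi_{-1}(h)$. The key observation is that the parity of $\ord_\p(\nr(h))$ equals the parity of the graph distance $d(w_0, h.w_0) = d(w_0, w_{m-1}) = m - 1$; this is the standard parity principle for the Bruhat-Tits tree of $\PGL_2(F_\p)$, where the two $\PGL_2$-orbits of vertices are distinguished precisely by the parity of the reduced norm's valuation. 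Hence $\chi_{-1}(h) = (-1)^{m-1} = (-1)^{m+1}$.

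Combining, the upper-right composite at $\cf_{tU_{T_\p}^{(m)}}$ equals
\begin{align*}
(-1)^{m+1} \cdot \partial_m(\Tev(\phi))(t)
&= (-1)^{m+1} \cdot \Tev(\phi)(t.(w_{m-1},w_m)) \\
&= (-1)^{m+1} \cdot \chi_{-1}(g_e)\cdot \phi(\cf_{U(e)}\otimes 1) \\
&= \phi(\cf_{U(t.(w_{m-1},w_m))}\otimes 1).
\end{align*}
Using the equality $U(t.(w_{m-1},w_m)) = t.\kappa(U_{T_\p}^{(m)}) = \kappa(tU_{T_\p}^{(m)})$ from the definition of $U(\cdot)$ and \eqref{StabFormel}, this matches the lower-left composite exactly. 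Since $\phi$, $t$, and $m$ were arbitrary, the diagram commutes. One should also check that $(\Tdelta)^\ast$ indeed lands in $\Dist_0(T_\p,N)$: this follows because $\Tdelta(\cf_{T_\p}) = \delta_\p(\cf_{T_\p})\otimes 1 = \cf_{\Ends}\otimes 1 = 0$ in $\TSt_\p$, so every element of the image of $(\Tdelta)^\ast$ vanishes on constant functions.

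The only real subtlety is the parity statement for the $\PGL_2$-action on $\T_\p$; everything else is a routine unwinding, and the sign $(-1)^{m+1}$ in the right vertical arrow is engineered precisely to absorb the twisting factor $\chi_{-1}(h) = (-1)^{m-1}$ coming from the definition of $\Tev$.
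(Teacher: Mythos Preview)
Your argument is correct. The paper itself does not supply a proof of this lemma; it is stated without justification, in parallel with Lemma~\ref{comp} for the untwisted case, which is likewise left as an immediate consequence of the identity $U((w_{m-1},w_m))=\kappa(U_{T_\p}^{(m)})$. Your write-up provides precisely the missing verification: the new ingredient compared to the untwisted case is the computation of the sign $\chi_{-1}(g_e)$, and your use of the standard parity principle on the Bruhat-Tits tree (namely that $\ord_\p(\nr(h))\equiv d(w_0,h.w_0)\pmod 2$) to obtain $\chi_{-1}(g_e)=(-1)^{m-1}$ is exactly what is needed. The remaining steps are, as you note, routine unwinding of the definitions of $\Tev$, $\Tdelta$, and $\partial_m$.
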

\subsection{Global cohomology classes and pullback to the torus} \label{Pullback}
In this section we globalize the constructions of the previous sections.

We fix pairwise disjoint finite sets $S_{\St}$, $S_\tw$ and $S'$ of finite places of $F$ disjoint from $\ram(B)$ and put $S=S_{\St}\cup S_\tw$. 
For an $R$-module $N$ and a compact open subgroup $K \subseteq G(\A^{S,\infty})$ we consider
\begin{align*}
 \mathcal{A}(K,S_{\St},S_{\tw};N)^{S'} = C\Bigl(G(\A^{S \cup S',\infty})/K,\Hom\Bigl(\bigotimes_{\p\in S_{\St}}\St_{\p} \otimes \bigotimes_{\p\in S_{\tw}}\TSt_{\p},N\Bigr)\Bigr)
\end{align*}
with its natural $G(F)$-action, i.e.~for every $\p\in S_{\St}$ (resp.~$\p\in S_{\tw}$) we view $\St_\p$ (resp. $\TSt_{\p}$) as a $G(F)$-module via the embedding $G(F)\into G_\p$ and put
$$(g.\Phi)(x)(f_{\St}\otimes f _{\tw})=\Phi(g^{-1}x)((g^{-1}f_{\St})\otimes (g^{-1}f _{\tw}))$$
for $g\in G(F)$, $\Phi\in \mathcal{A}(K,S_{\St},S_{\tw};N)^{S'}$, $x\in G(\A^{S \cup S',\infty})/K$, $f_{\St}\in \bigotimes_{\p\in S_{\St}}\St_{\p}$ and $f_{\tw}\in \bigotimes_{\p\in S_{\tw}}\TSt_{\p}$.
Further, we fix a locally constant character $$\epsilon\colon T_{\infty}\too \left\{\pm 1\right\}.$$
We will often view $\epsilon$ as a character on $T(F)$ via the embedding $T(F)\into T_\infty$.
There exists a unique extension $\epsilon\colon G_\infty\to \left\{\pm 1\right\}$ such that the diagram
\begin{center}
\begin{tikzpicture}
    \path 	(0,0) 	node[name=A]{$T_\infty$}
		(2,0) 	node[name=B]{$\left\{\pm 1\right\}$}
		(0,-1) 	node[name=C]{$G_\infty$};
    \draw[->] (A) -- (B) node[midway, above]{$\epsilon$};
    \draw[->] (C) -- (B) node[midway, below]{$\epsilon$};
    \draw[->] (A) -- (C) node[midway, left]{$\iota$};
\end{tikzpicture} 
\end{center}
is commutative.
Again, we view $\epsilon$ also as a character on $G(F)$ via the embedding $G(F)\into G_\infty$.
\begin{Def}
  The space of $N$-valued, $(S_{\St},S_\tw)$-special modular symbols on $G$ of level $K$ and sign $\epsilon$ is defined to be
\begin{align*}
 \mathcal{M}(K,S_{\St},S_\tw;N)^{\epsilon} = \HH^d(G(F),\mathcal{A}(K,S_{\St},S_\tw;N)(\epsilon)).
\end{align*}
\end{Def}

Let $\n \subseteq \OO_F$ be a non-zero ideal coprime to $\ram(B)$.
We fix an Eichler order $\mathcal{R}(\n) \subseteq \mathcal{R}$ of level $\n$ contained in the fixed maximal order $\mathcal{R}$.
As in the local case, we write $K_{\p}$ (resp.~$K_{\p}(\n)$) for the image of $\mathcal{R}_{\p}^{\ast}$ (resp.~ $\mathcal{R}(\n)^{\ast}_\p$) in $G_{\p}$ and set
\begin{align*}
 K=\prod_{\p \notin S_{\infty}} K_\p\quad \left(\mbox{resp.}\ K(\n)=\prod_{\p \notin S_{\infty}} K_\p(\n)\right).
\end{align*}
We put
$$\mathcal{M}(\n,S_\St,S_{\tw};N) = \mathcal{M}(K(\n)^{S},S_\St,S_\tw;N)$$
and
$$\mathcal{M}(\n;N)=\mathcal{M}(\n,\emptyset,\emptyset;N).$$
Without loss of generality we will always assume that every $\p\in S$ divides $\n$ exactly once.

For an open subgroup $\tild{U} \subseteq U_T^{S \cup S',\infty}$ we define $$\mathcal{D}(\tild{U},S;N)^{S',\infty}=\Hom_R(\mathcal{C}_c(\tild{U},S,R)^{S',\infty},N).$$
In case $\tild{U}=U_T(\m)$ with $\m \subseteq \OO_F$ a non-zero ideal we write $\mathcal{D}(\m,S;N)^{S',\infty}$ for the corresponding distribution space.

From now on we assume that every prime $\p$ in $S_{\tw}$ is inert in $E$.
Thus, the local maps \eqref{deltasp} and \eqref{deltansp} (resp.~\eqref{deltatw}) induce the semi-local map
\begin{align*}
 \delta_{S_\St} = \otimes_{\p \in S_\St} \delta_\p \colon \bigotimes_{\substack{\p\in S_\St,\\ \p\ \text{split}}} C_c(F_\p,\Z)\otimes \hspace{-1em}\bigotimes_{\substack{\p\in S_\St,\\ \p\ \text{non-split}}}\hspace{-1em}C_c(T_\p,\Z)/\Z \too \bigotimes_{\p \in S_\St}\St_{\p}
\end{align*}
respectively
\begin{align*}
 \delta^{\tw}_{S_\tw} = \otimes_{\p \in S_\tw} \Tdelta\colon \bigotimes_{\p\in S_\tw}C_c(T_\p,\Z)/\Z \too \bigotimes_{\p \in S_\tw}\TSt_{\p}.
\end{align*}

For every compact open subgroup $K \subseteq G(\A^{S \cup S',\infty})$ and every $g \in G(\A^{S \cup S',\infty})$ we get a $T(F)$-equivariant homomorphism
\begin{align*}
 \Delta_{g,S_\St,S_\tw}^{S'} \colon \mathcal{A}(K,S_\St,S_\tw;N)^{S'} \too \mathcal{D}(\iota^{-1}(gKg^{-1}),S;N)^{S'}
\end{align*}
via
\begin{align*}
 \Delta_{g,S_\St,S_\tw}^{S'}(\Phi)(x)(f_{S_\St}\otimes f _{S_\tw}) =\Phi(\iota(x)g)(\delta_{S_\St}(f_{S_\St})\otimes\delta^{\tw}_{S_\tw}(f_{S_\tw}))
\end{align*}
for $x \in T(\A^{S \cup S',\infty})/\iota^{-1}(gKg^{-1})$ and $f_{S_\St}$, as well as $f_{S_\tw}$, in the appropriate semi-local function spaces.

Composing $\Delta_{g,S_\St,S_\tw}$ with the restriction map
\begin{align*}
 \mathcal{M}(K,S_\St,S_\tw;N)^{\epsilon} &\too \HH^d(T(F),\mathcal{A}(K,S_\St,S_\tw;N)(\epsilon))
\intertext{on cohomology yields a map}
 \mathcal{M}(K,S_\St,S_\tw;N)^{\epsilon} &\too \HH^d(T(F),\mathcal{D}(\iota^{-1}(gKg^{-1}),S;N)^{\infty}(\epsilon)),
\end{align*}
which we will also denote by $\Delta_{g,S_\St,S_\tw}$.

Note that by Remark \ref{EichlerTree} there is an up to orientation unique $G(\A^{\ram(B)\cup S,\infty})$-equivariant isomorphism 
\begin{align} \label{Ecken}
 G(\A^{\ram(B)\cup S,\infty})/K(\n)^{\ram(B)\cup S} \cong \prod_{\p \not\in \ram(B)\cup S\cup S_{\infty}}\nolimits' \E_{\p,\ord_\p(\n)}.
\end{align}
\begin{Def}\label{allowable}
A non-zero ideal $\m \subseteq \OO_F$ is called $\n$-allowable if $\m$ is coprime to $\ram(B)$ and $\ord_\p(\m) - \ord_\p(\n) > \eta_\p$ for all $\p \notin\ram(B)$.
\end{Def}
Let us fix an $\n$-allowable ideal $\m$.
For a finite place $\p$ of $F$ that is not in $S\cup\ram(B)$ we define $e_\p=(w_{\ord_\p(\m)-\ord_\p(\n)},\dots,w_{\ord_\p(\m)})$, where the $w_i$ are the vertices chosen in Section \ref{Local}.
Let $g_\m = (g_\p)_\p \in G(\A^{S,\infty})/K(\n)^{S}$ be the element that is equal to one at places in $\ram(B)$ and corresponds to $(e_\p)_\p$ under the above isomorphism for all places $\p\notin S\cup \ram(B)$.
In this case, the equality
$$U_T(\m)=\iota^{-1}(g_\m K(\n) g^{-1}_\m)$$
holds and hence, we have a map
\begin{align*}
 \Delta_{\m,S_\St,S_\tw}= \Delta_{g_\m,S_\St,S_\tw} \colon \mathcal{M}(\n,S_\St,S_\tw;N)^{\epsilon} \too \HH^d(T(F),\mathcal{D}(\m,S;N)^{\infty}(\epsilon)).
\end{align*}
As always, we drop $S_\St$ and $S_\tw$ from the notation if they are empty.

For every $\p\notin \ram(B)$ the Hecke operator $\mathbb{T}_\p$ as defined in \eqref{Heckeoperatoren} acts on $\mathcal{M}(\n;N)^{\epsilon}$ via the isomorphism \eqref{Ecken}.
Similarly, for $\n' \mid \n$ the global Atkin-Lehner involution $W_{\n'}$ is given by applying the local Atkin-Lehner involutions $W_{\p^{\ord_\p(\n')}}$ at the places $\p \mid \n'$.
Also, for every $\p\in\ram(B)$ the local Atkin-Lehner involution $W_\p$ is given by interchanging the two elements in the set $G_\p/K_\p$.
\subsection{Anticyclotomic Stickelberger elements} \label{Stickelberger}
We are going to define anticyclotomic Stickelberger elements, bound their order of vanishing from below and prove a functional equation.
Throughout this section we fix a ring $R$, an $R$-module $N$, a non-zero ideal $\mathfrak{n}\subseteq\mathcal{O}$, which is coprime to $\ram(B)$, and a character $\epsilon$ as before.
In addition, we fix a modular symbol $\kappa\in \mathcal{M}(\mathfrak{n};N)^{\epsilon}$. 
Stickelberger elements will be defined by taking cap products of various pullbacks of $\kappa$ with the homology class defined in Section \ref{Vanishing} associated to the Artin reciprocity map.

\begin{Def}
A finite Abelian extension $L$ over $E$ is called anticyclotomic if it is Galois over $F$ and $\tau\sigma\tau\inv=\sigma\inv$ holds for all $\sigma\in \Gal(L/E)$.
\end{Def}

We fix an anticyclotomic extension $L/E$ with Galois group $\G=\G_{L/E}$.
The Artin reciprocity map induces a group homomorphism
\begin{align*}
 \rec_{L/E}\colon T(\A)/T(F)\too \G.
\end{align*}
In addition, we fix an $\n$-allowable ideal $\m$ of $\OO_F$ that bounds the ramification of $L/E$, i.e.~$U_T(\m)$ is contained in the kernel of $\rec_{L/E}$.
Let $$c_L=c_{\rec_{L/E}}\in \HH_d(T(F),\mathcal{C}(\m,\Z[\G])^{\infty}(\epsilon))$$ be the image of $\rec_{L/E}$ under \eqref{homclass} with $S=\emptyset$. We adopt similar notations if $S$ is not the empty set, e.g.~we set $c_{L}(\m,S,\epsilon)=c_{\rec_{L/E}}(\m,S,\epsilon)$.

The natural pairing
\begin{align*}
 \mathcal{C}_{c}(\mathfrak{m},\Z[\G])^{\infty}\times \mathcal{D}(\mathfrak{m};N)^\infty\too \Z[\G]\otimes N
\end{align*}
induces a cap-product pairing
\begin{align*}
 \HH_{d}(T(F),\mathcal{C}_{c}(\mathfrak{m},\Z[\G])^{\infty}(\epsilon))\times \HH^{d}(T(F),\mathcal{D}(\mathfrak{m};N)^\infty(\epsilon))\too \Z[\G]\otimes N.
\end{align*}

\begin{Def}\label{StickelDef}
The anticyclotomic Stickelberger element of modulus $\mathfrak{m}$ associated with $\kappa$ and $L/F$ is defined as the cap-product
\begin{align*}
 \Theta_{\mathfrak{m}}(L/F,\kappa)=\Delta_{\mathfrak{m}}(\kappa)\cap c_L \in \Z[\G]\otimes N.
\end{align*}
\end{Def}

As a direct consequence of functoriality of the Artin reciprocity map we get the following compatibility property:

\begin{Proposition}
Let $L^\prime$ be an intermediate extension of $L/E$.
Then we have
\begin{align*}
 \pi_{L/L^\prime}(\Theta_{\mathfrak{m}}(L/F,\kappa))=\Theta_{\mathfrak{m}}(L^\prime/F,\kappa),
\end{align*}
where
\begin{align*}
 \pi_{L/L^{\prime}}\colon \Z[\mathcal{G}_{L/E}]\otimes N\too \Z[\mathcal{G}_{L^\prime/E}]\otimes N
\end{align*}
is the canonical projection.
\end{Proposition}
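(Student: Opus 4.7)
The plan is to reduce the statement to a naturality property of the homology class $c_L$ in the target of the reciprocity character. By Definition \ref{StickelDef},
\[
\Theta_{\mathfrak{m}}(L/F,\kappa) = \Delta_{\mathfrak{m}}(\kappa) \cap c_L,
\]
and the factor $\Delta_{\mathfrak{m}}(\kappa) \in \HH^{d}(T(F), \mathcal{D}(\mathfrak{m};N)^{\infty}(\epsilon))$ does not involve $L$. Since the cap-product pairing is bi-additive and natural in the coefficients of the $\mathcal{C}$-factor, the map $\pi_{L/L'}$ commutes with it. Hence the claim will follow once we establish
\[
\pi_{L/L'}(c_L) = c_{L'} \quad \text{in} \quad \HH_{d}(T(F), \mathcal{C}(\mathfrak{m}, \Z[\G_{L'/E}])^{\infty}(\epsilon)).
\]

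First I would check that the modulus $\mathfrak{m}$, chosen to bound the ramification of $L/E$, also bounds that of $L'/E$. This is automatic: the functoriality of Artin reciprocity for the tower $E \subseteq L' \subseteq L$ yields the identity
\[
\rec_{L'/E} = \pi_{L/L'} \circ \rec_{L/E}
\]
of locally constant characters $T(\A)/T(F) \to \G_{L'/E}$. In particular, $U_T(\mathfrak{m}) \subseteq \ker(\rec_{L/E}) \subseteq \ker(\rec_{L'/E})$, so $c_{L'}$ is well defined with the same modulus $\mathfrak{m}$.

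Next, I would appeal to the functoriality of the construction \eqref{homclass} in the coefficient ring: each of the three arrows in \eqref{homclass} is natural with respect to a ring homomorphism $f\colon R \to R'$, since the first is cap product with the fixed class $\vartheta$, the second is the isotypical projection \eqref{sign}, and the third is the extension-by-zero map \eqref{extension}, all of which are induced by $T(F)$-equivariant maps that commute with change of coefficients. Applied to the $\Z$-algebra homomorphism $\pi_{L/L'} \colon \Z[\G_{L/E}] \to \Z[\G_{L'/E}]$ induced by the projection of Galois groups, this gives
\[
\pi_{L/L'}(c_{\chi}) = c_{\pi_{L/L'} \circ \chi}
\]
for any character $\chi$ to $\Z[\G_{L/E}]^{\ast}$. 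Specializing $\chi = \rec_{L/E}$ and using the identity displayed above produces $\pi_{L/L'}(c_L) = c_{L'}$, which combined with the naturality of the cap product completes the proof.

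I do not anticipate any genuine obstacle: the argument is purely a matter of tracking functoriality through the three-step construction \eqref{homclass} and through the cap-product pairing, with the single substantive input being the functoriality of Artin reciprocity in the tower $L/L'/E$.
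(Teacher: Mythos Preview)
Your proposal is correct and follows precisely the approach indicated in the paper, which states the proposition ``as a direct consequence of functoriality of the Artin reciprocity map'' without further elaboration. You have simply unpacked that one-line justification into its constituent steps: naturality of the cap product, functoriality of the construction \eqref{homclass} in the coefficient ring, and the identity $\rec_{L'/E} = \pi_{L/L'} \circ \rec_{L/E}$.
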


Let $k$ be an $R$-algebra and $\chi\colon\G\to k^{\ast}$ a character.
Via the Artin reciprocity map we can view $\chi$ as a character of $T(\A)$.
The character also induces an $R$-linear map $\chi\colon \Z[\G]\otimes N\too k\otimes_R N$.
Orthogonality of characters immediately implies the following result. See \cite{BG}, Proposition 1.12, for a more detailed proof.

\begin{Proposition}
Let $k$ be an $R$-algebra which is a field and let $\chi\colon\G\to k^{\ast}$ be a character.
If $\chi_\infty\neq \epsilon$, we have $$\chi(\Theta_{\mathfrak{m}}(L/F,\kappa))=0.$$
\end{Proposition}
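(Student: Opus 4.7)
The plan is to reduce the claim to orthogonality of characters on $T_\infty/U_{T_\infty}$. By $R$-linearity of the cap product in its first argument, applying $\chi\colon \Z[\G]\to k$ coefficient-wise yields
$$\chi(\Theta_{\mathfrak{m}}(L/F,\kappa)) = \Delta_{\mathfrak{m}}(\kappa)\cap \chi_{\ast}(c_L),$$
where $\chi_{\ast}$ denotes the induced map on the homology of $T(F)$ with coefficients in the corresponding function space. Naturality of the three-step construction \eqref{homclass} in the coefficient ring identifies $\chi_{\ast}(c_L)$ with $c_{\chi\circ \rec_{L/E}}$, the homology class attached to the locally constant character $\chi\circ \rec_{L/E}\colon T(\A)/T(F)\to k^{\ast}$. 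It therefore suffices to prove that $c_{\chi\circ\rec_{L/E}}=0$ whenever $\chi_\infty\neq \epsilon$.

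To establish this vanishing, I would trace $\chi\circ \rec_{L/E}$ through the three arrows of \eqref{homclass}. The first arrow caps with $\vartheta$ and is built from the finite-adelic data; it is the second arrow, the $\epsilon$-isotypical projection \eqref{sign}, that brings in the Archimedean component. Since the character factors as a product $\chi^\infty\cdot \chi_\infty$ and $\chi_\infty$ is constant on each $U_{T_\infty}$-coset of $T_\infty$, the projection
$$f\mapsto \sum_{x\in T_\infty/U_{T_\infty}}\epsilon(x)f(x)$$
applied after capping with $\vartheta$ factors the Archimedean variable out of the sum, producing the scalar
$$\sum_{x\in T_\infty/U_{T_\infty}}\epsilon(x)\chi_\infty(x)$$
as a coefficient of the resulting class in $H_d(T(F),\mathcal{C}_c(\mathfrak{m},k)^\infty(\epsilon))$. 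Both $\epsilon$ and $\chi_\infty$ are characters of the finite Abelian group $T_\infty/U_{T_\infty}$, so by orthogonality this scalar vanishes exactly when $\chi_\infty\neq \epsilon$, giving $c_{\chi\circ\rec_{L/E}}=0$ and hence the conclusion.

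The only point that requires care is justifying the factorisation of the $\epsilon$-projection off the Archimedean component at the level of (co)chains representing the cap product, rather than only formally. This is, however, a routine check: one writes $\chi\circ\rec_{L/E}$ as a product with respect to the adelic decomposition, observes that $\vartheta$ is built purely from data outside $\infty$, and uses that the projection \eqref{sign} is defined componentwise on the $T_\infty$-variable. No genuinely new argument is required beyond bookkeeping and the elementary orthogonality relation.
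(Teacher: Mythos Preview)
Your approach is correct and matches the paper's, which simply records ``Orthogonality of characters immediately implies the following result'' without further detail. One minor correction to your justification: the class $\vartheta$ does involve the Archimedean component (it lives in $\HH_d(T(F),C_c^0(T(\A)/U_T,\Z))$), so it is not ``built purely from data outside $\infty$''; the reason the orthogonality sum $\sum_{x}\epsilon(x)\chi_\infty(x)$ nonetheless factors out is that $\vartheta$ is invariant under translation by $T_\infty/U_{T_\infty}$ (being independent of the choice of fundamental domain), so capping with it preserves the $\chi_\infty$-eigenspace and the $\epsilon$-projection then annihilates it when $\chi_\infty\neq\epsilon$.
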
 

Let $S_\St$ and $S_\tw$ be finite disjoint sets of finite places of $F$ with
\begin{itemize}
 \item $\mathfrak{p}$ divides $\mathfrak{n}$ exactly once for all $\mathfrak{p}\in S=S_\St\cup S_\tw$, 
 \item $S$ is disjoint from $\ram(B)$ and
 \item every prime in $S_\tw$ is inert in $E$.
\end{itemize}
The local evaluation maps \eqref{localev} and \eqref{twistlocalev} induce a map $$\Ev_{S_\St,S_\tw} \colon \mathcal{M}(\n,S_\St,S_\tw;N)^{\epsilon}\too \mathcal{M}(\n;N)^{\epsilon}.$$
For a place $v$ of $F$ we let $\mathcal{G}_{v} \subseteq \mathcal{G}$ be the decomposition group at $v$.
If $\p\in S$, we define $I_{\p} \subseteq \Z[\mathcal{G}]$ as the kernel of the projection $\Z[\mathcal{G}]\onto \Z[\mathcal{G}/\mathcal{G}_{\p}]$.
If $v\in S_{\infty}$ is split in $E$, we let $\sigma_{v}$ be a generator of $\mathcal{G}_{v}$ and define $I_{v}^{\pm 1} \subseteq \Z[\mathcal{G}]$ as the ideal generated by $\sigma_{v}\mp 1$.
For non-split Archimedean places we define $I_{v}^{\pm}=\Z[\mathcal{G}]$.

\begin{Lemma}\label{vanish2}
Assume that $N$ is $\Z$-flat and that there exists an $(S_\St,S_\tw)$-special modular symbol $\kappa'\in \mathcal{M}(\n,S_\St,S_\tw,;N)^\epsilon$ lifting $\kappa$, i.e. $\Ev_{S_\St,S_\tw}(\kappa_S)=\kappa$ holds.
Then we have
\begin{align*}
 \Theta_{\mathfrak{m}}(L/F,\kappa)\in \left(\prod_{v\in S_{\infty}} I_{v}^{-\epsilon_{v}(-1)} \cdot \prod_{\p \in S_\mathfrak{m}} I_{\p}\right) \otimes N.
\end{align*}
In particular, if $N=R$ is a $\Z$-flat ring  and $\epsilon$ is trivial, we have
\begin{align*}
 2^{-d}\Theta_{\mathfrak{m}}(L/F,\kappa)&\in R[\mathcal{G}]
\intertext{and}
 \ord_{R}(2^{-d}\Theta_{\mathfrak{m}}(L/F,\kappa))&\geq \left|S_{\m}\right|.
\end{align*}
\end{Lemma}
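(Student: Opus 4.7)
The plan is to apply Proposition~\ref{vanish1} to the Artin reciprocity character $\chi=\rec_{L/E}$ and to use the lift $\kappa'$ in order to rewrite the Stickelberger element in terms of the multilinear formula provided by that proposition.

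First, set up the ideals of Section~\ref{Vanishing} with $\mathfrak{a}_\p=I_\p$ for $\p\in S_\m$, $\mathfrak{a}_v=I_v^{-\epsilon_v(-1)}$ for $v\in S_\infty$, and take $\mathfrak{a}$ to be a common ideal containing all the $\mathfrak{a}_\p$ such that $\mathfrak{a}\cdot\prod_v\mathfrak{a}_v=0$ in a suitable quotient of $\Z[\G]$. The congruence $\chi_\p\equiv 1\bmod I_\p$ follows from the inclusion $\rec_{L/E}(T_\p)\subseteq\G_\p$, and at each split Archimedean place $v$ the congruence $\chi_v(-1)\equiv-\epsilon_v(-1)\bmod I_v^{-\epsilon_v(-1)}$ follows from the anticyclotomic relation $\tau\sigma\tau\inv=\sigma\inv$; at the non-split Archimedean places $I_v^{-\epsilon_v(-1)}=\Z[\G]$ and the condition is trivial.

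By hypothesis $\kappa$ lifts to an $(S_\St,S_\tw)$-special modular symbol $\kappa'$. Lemmas~\ref{comp} and~\ref{comptwist} show that the boundary maps $\partial_m$ defining $\Delta_\m$ factor through $\delta_\p^\ast$ (resp.\ $(\delta_\p^\tw)^\ast$) at each prime in $S_\St$ (resp.\ $S_\tw$), and these are exactly the transposes of the local evaluations assembled into $\Ev_{S_\St,S_\tw}$. Consequently the cap product computing the Stickelberger element satisfies
\[ \Theta_\m(L/F,\kappa)=\Delta_\m(\kappa)\cap c_L=\Delta_{\m,S_\St,S_\tw}(\kappa')\cap\tilde c_L, \]
where $\tilde c_L$ denotes the image of $c_L$ under the extension-by-zero map~\eqref{extension}, i.e.\ the class $c_\chi$ of Section~\ref{Vanishing} attached to $\chi=\rec_{L/E}$ for $S=S_\m$. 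Applying Proposition~\ref{vanish1} yields
\[ \tilde c_L=\pm\mu_\ast\bigl((c_{\dd\chi_{\p_1}}\cup\cdots\cup c_{\dd\chi_{\p_s}})\cap\overline{c_L}\bigr), \]
with $\mu$ landing in $\prod_{v\in S\cup S_\infty}\mathfrak{a}_v=\prod_{v\in S_\infty}I_v^{-\epsilon_v(-1)}\cdot\prod_{\p\in S_\m}I_\p$. Pairing against $\Delta_{\m,S_\St,S_\tw}(\kappa')$ gives the first assertion, since the $\Z$-flatness of $N$ ensures that the product of ideals and the tensor with $N$ commute.

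For the ``In particular'' statement, trivial $\epsilon$ gives $I_v^{-\epsilon_v(-1)}=I_v^{-1}=(\sigma_v+1)$ at each of the $d$ split Archimedean places. The trivial-sign isotypic projection~\eqref{sign} forces the resulting class to be invariant under every split Archimedean decomposition group, and combined with the containment in $\prod_{v\in S_\infty}I_v^{-1}$ this produces divisibility by $2^d$, so $2^{-d}\Theta_\m(L/F,\kappa)\in R[\G]$. The order-of-vanishing bound follows from the containment $I_\p\subseteq I_R(\G)$ for every $\p\in S_\m$, whence $\prod_{\p\in S_\m}I_\p\subseteq I_R(\G)^{|S_\m|}$. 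The most delicate point in the argument is verifying that the multilinear formula of Proposition~\ref{vanish1} interlocks correctly with the semi-local pullback $\Delta_{\m,S_\St,S_\tw}$; in particular, the sign $(-1)^{m+1}$ from Lemma~\ref{comptwist} at inert primes and the behaviour of the extension-by-zero map~\eqref{extension} under the various boundary maps must be tracked carefully.
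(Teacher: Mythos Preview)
Your proposal is correct and follows essentially the same route as the paper: rewrite $\Theta_\m(L/F,\kappa)$ as $\Delta_{\m,S_\St,S_\tw}(\kappa')\cap c_L(\m,S_\m,\epsilon)$ via Lemmas~\ref{comp} and~\ref{comptwist}, then invoke Proposition~\ref{vanish1}. The paper's proof is two sentences to this effect; you have supplied the details the paper leaves implicit, including the setup of the ideals $\mathfrak a_\p=I_\p$, $\mathfrak a_v=I_v^{-\epsilon_v(-1)}$ and the treatment of the ``In particular'' clause (which the paper does not spell out at all).

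One small quibble: the Archimedean congruence $\chi_v(-1)\equiv -\epsilon_v(-1)\bmod I_v^{-\epsilon_v(-1)}$ is not really a consequence of the anticyclotomic relation $\tau\sigma\tau^{-1}=\sigma^{-1}$; it follows directly from the fact that $\sigma_v$ is by definition $\rec_v(-1)$ together with the definition $I_v^{\pm 1}=(\sigma_v\mp 1)$. Also, your phrase ``a suitable quotient of $\Z[\G]$'' is the right idea but could be made explicit: one works in $\Z[\G]/J$ with $J=\prod_{v\in S_\infty}I_v^{-\epsilon_v(-1)}\cdot\prod_{\p\in S_\m}I_\p$, so that $\prod\mathfrak a_v=0$ there and the ``In particular'' clause of Proposition~\ref{vanish1} applies directly.
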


\begin{proof}
By Lemma \ref{comp} \eqref{comp1} and Lemma \ref{comptwist} we have
\begin{align*}
 \Theta_{\m}(L/F,\kappa) & = \Delta_{\m}(\kappa)\cap c_{L}(\m,\emptyset,\epsilon)   \\
			 & =\pm\ \Delta_{\m,S_\St,S_\tw}(\kappa')\cap c_{L}(\m,S,\epsilon).
\end{align*}
We set $I=\prod_{v\in S_{\infty}} I_{v}^{-\epsilon_{v}(-1)} \cdot \prod_{\p \in S_\mathfrak{m}} I_{\p}$ and consider the ring $A=\Z[\mathcal{G}]/I$ together with the projection maps $\pi\colon \Z[\G]\to A$ and $\pi_N\colon \Z[\G]\otimes N \to A\otimes N$.
We have
\begin{align*}
 \pi_N (\Theta_{\m}(L/F,\kappa))=\pm\ \Delta_{\m,S_\St,S_\tw}(\kappa')\cap\pi_{\ast}(c_{L}(\m,S,\epsilon))=0
\end{align*}
since the homology class $\pi_{\ast}(c_{L}(\m,S,\epsilon))=c_{\pi\circ \rec_{L/E}}(\mathfrak{m},S,\epsilon)$ vanishes by applying Proposition \ref{vanish1} with $\mathfrak{a}=A$.
\end{proof}

\begin{Lemma}\label{funceq}
Suppose that every $\p\in\ram(B)$ is inert in $E$ and that we can decompose $\n=\n_1 \n_2$ such that $\n_1$ is coprime to $\m$ and $\n_2 \mid \m$.
Write $\n_1 = \prod_{i=1}^r \p_i^{n_i}$, with $n_i \geq 1$ for $1 \leq i \leq r$.
Let $\kappa$ be an eigenvector of $W_{\p_i^{n_i}}$ with eigenvalue $\varepsilon_{i} \in \{ \pm 1\}$ for $1 \leq i \leq r$ and of $W_\p$ with eigenvalue $\varepsilon_{\p}\in \{ \pm 1\}$ for every $\p\in\ram(B)$.
Further, write $\varepsilon_{\n_1} = \prod_{i=1}^r\varepsilon_i$ for the eigenvalue of $W_{\n_1}$.
Then
\begin{align*}
 \Theta_\m(L/F,\kappa)^\vee = (-1)^d \cdot \epsilon(-1) \cdot \varepsilon_{\n_1} \prod_{\p\in\ram(B)}\varepsilon_{\p} \cdot \Theta_\m(L/F,\kappa)
\end{align*}
holds up to multiplication with an element in $\G$.
\end{Lemma}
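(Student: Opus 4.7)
The plan is to apply the involution ``conjugation by $J$'' to both sides of the defining cap product $\Theta_\m(L/F,\kappa)=\Delta_\m(\kappa)\cap c_L$. On the homology side this will produce the $\vee$-operation on $\Z[\G]$, while on the cohomology side it will produce a product of local Atkin--Lehner involutions together with the archimedean signs. Two identities underpin the translation. First, on $T=E^\ast/F^\ast$ the nontrivial Galois element $\tau$ acts as inversion (since $t\tau(t)=N_{E/F}(t)\in F^\ast$), so conjugation by $J$ on $T(\A)$ coincides with the inversion map. Second, the anticyclotomic property of $L/E$ ensures that composing $\rec_{L/E}$ with inversion on $T(\A)/T(F)$ yields the inverse character, i.e.\ the $\vee$-operation on $\Z[\G]$.

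First I would rewrite $\Theta_\m(L/F,\kappa)^\vee = \Delta_\m(\kappa)\cap(\invol_\ast c_L)$ and, using the $T(F)$-equivariance of the cap product together with the fact that $J$ normalizes $T(F)$ via $\tau$, transfer the $J$-action from the homology factor onto the cohomology factor. This reduces the functional equation to computing $J\cdot\Delta_\m(\kappa)$ place by place, where only the local factors of $\kappa$ together with the local incarnations of $J$ intervene.

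Next I would invoke Lemma \ref{localfunceq} at each finite place $\p\notin\ram(B)$. The two parts of that lemma correspond precisely to the two cases in the decomposition $\n=\n_1\n_2$: at $\p\mid\n_1$ one has $\ord_\p(\m)=0$, and allowability forces $\eta_\p=-\infty$, so $\p$ is split in $E$; part (i) then produces the local Atkin--Lehner $W_{\p^{\ord_\p(\n_1)}}$, which by hypothesis contributes the eigenvalue $\varepsilon_i$. At $\p\mid\n_2$ or at $\p\nmid\n\cdot\ram(B)$ one has $\ord_\p(\m)\geq\ord_\p(\n)$, and part (ii) gives inversion on the local torus up to a $T_\p$-factor, with no sign contribution. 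At $\p\in\ram(B)$ the hypothesis that $\p$ is unramified in $E$ forces $J\notin K_\p$, so $J$ realizes the nontrivial coset of $G_\p/K_\p$ and acts as the local Atkin--Lehner $W_\p$, yielding $\varepsilon_\p$. At the archimedean places the action of $J$ flips the orientation of the fundamental class $\vartheta$ at each of the $d$ split places of $F$, contributing the factor $(-1)^d$, while the sign-character twist contributes $\epsilon(-1)$.

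Assembling all the local contributions yields the claimed identity modulo the various $T_\p$-ambiguities provided by Lemma \ref{localfunceq}; under $\rec_{L/E}$ these assemble into a single element of $\G$, accounting for the ``up to multiplication with an element in $\G$'' clause. The main obstacle is the careful bookkeeping: one must check that the local $T_\p$-ambiguities really do combine into a well-defined global element of $\G$ (rather than producing an uncontrolled additional factor), and that the orientation change of $\vartheta$ at the $d$ split archimedean places, where $\tau$ permutes the two embeddings $E_v\hookrightarrow\C$, produces exactly the sign $(-1)^d$ with respect to the chosen generator of $\HH_d(T(F)^+,\Z)$.
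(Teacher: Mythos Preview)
Your proposal is correct and follows essentially the same approach as the paper: the paper's proof simply says ``This follows directly from Lemma \ref{localfunceq}. See \cite{BG}, Proposition 1.15, for more details,'' and your write-up is precisely a detailed expansion of that argument, correctly identifying the local inputs (parts (i) and (ii) of Lemma \ref{localfunceq} according to whether $\p\mid\n_1$ or $\ord_\p(\m)\geq\ord_\p(\n)$, the $\ram(B)$ contribution via $J\notin K_\p$, and the archimedean signs). Your observation that allowability forces primes dividing $\n_1$ to be split in $E$ is also correct and is exactly what makes part (i) applicable there.
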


\begin{proof}
This follows directly from Lemma \ref{localfunceq}. See \cite{BG}, Proposition 1.15, for more details.
\end{proof}

\section{Automorphic forms}
We will apply the results of the previous section to cohomology classes coming from automorphic forms.
To this end, let $\pi$ be a cuspidal automorphic representation of $\PGL_2(\A)$ with the following properties:
\begin{itemize}
 \item $\pi_v$ is a discrete series representation of weight $2$ for all Archimedean places $v$ of $F$ and
 \item $\pi_\p$ is special, i.e.~a twist of the Steinberg representation, for all $\p \in \ram(B)$.
\end{itemize}
We write $\Gamma_0(\n)\subseteq \PGL_2(\A)$ for the usual adelic congruence subgroup of level $\n$.
By the automorphic formulation of Atkin-Lehner theory due to Casselman (see \cite{Ca}) there exists a unique non-zero ideal $\mathfrak{f}(\pi) \subseteq \OO_F$ such that $(\pi^\infty)^{\Gamma_{0}(\mathfrak{f}(\pi))}$ is one-dimensional.
Thus, the standard Hecke operator $\mathbb{T}_\p$ (resp.~the Atkin-Lehner involutions $\textnormal{W}_\p$) acts on $(\pi^\infty)^{\Gamma_{0}(\mathfrak{f}(\pi))}$ via multiplication by a scalar which we denote by $\lambda_\p$ (resp. $\omega_\p$).
A result of Clozel (cf.~\cite{C}) tells us that there exists a smallest subfield $\Q_\pi \subseteq \C$, which is a finite extension of $\Q$, such that $\pi^{\infty}$ can be defined over $\Q_\pi$.
More precisely, the Hecke eigenvalues $\lambda_\p$ are elements of the ring of integers $R_\pi$ of $\Q_\pi$.
\subsection{Stickelberger elements associated to automorphic representations} \label{JacquetLanglands}
Under our assumptions on $\pi$, Jacquet and Langlands have proven in \cite{JL} that there exists a transfer of $\pi$ to $B$, i.e.~there exists an automorphic representation $\pi_B$ of $G(\A)$ such that 
\begin{itemize}
\item $\pi_{B,v}\cong \pi_v$ for all places $v$ at which $B$ is split,
\item $\pi_{B,v}$ is the trivial one-dimensional representation for all $v\in S_\infty$ at which $B$ is non-split and
\item $\pi_{B,\p}$ is the trivial (resp.~non-trivial) smooth one-dimensional representation of $G_\p$ for every $\p\in\ram(B)$ for which $\pi_\p$ is the (twisted) Steinberg representation. In particular, the eigenvalue of $W_\p$ acting on $\pi_{B,\p}$ is the negative of the root number of $\pi_\p$.
\end{itemize}
Let $\mathfrak{f}(\pi_{B})$ be the maximal divisor of $\mathfrak{f}(\pi)$ which is coprime to $\ram(B)$.
We define $$\mathcal{M}(\mathfrak{f}(\pi_B);\Q_\pi)^{\epsilon,\pi} \subseteq \mathcal{M}(\mathfrak{f}(\pi_B);\Q_\pi)^{\epsilon}$$ to be the common eigenspace of the operators $\mathbb{T}_\p$ for $\p\notin\ram(B)$ with eigenvalues $\lambda_\p$.
The formalism of $(\mathfrak{g},K)$-cohomology together with the strong multiplicity one theorem implies that $\mathcal{M}(\mathfrak{f}(\pi_B);\Q_\pi)^{\epsilon,\pi}$ is one-dimensional for every sign character $\epsilon$.
By Theorem 11.4.4 of \cite{BS} every arithmetic group $\Gamma$ is of type (VFL) and therefore, the functor $N\mapsto H^\ast(\Gamma,N)$ commutes with direct limits (cf.~\cite{Se2}, p.~101).
It follows that the canonical map
$$\mathcal{M}(\mathfrak{f}(\pi_B);R_\pi)\otimes \Q_\pi\too \mathcal{M}(\mathfrak{f}(\pi_B);\Q_\pi)$$
is an isomorphism.
Therefore, the intersection of $\mathcal{M}(\mathfrak{f}(\pi_B);\Q_\pi)^{\epsilon,\pi}$ with the image of $\mathcal{M}(\mathfrak{f}(\pi_B);R_\pi)$ in $\mathcal{M}(\mathfrak{f}(\pi_B);\Q_\pi)$ is a locally free $R_\pi$-module of rank one.
We choose a maximal element $\kappa^{\pi_B,\epsilon}$ of this module.

\begin{Remark}
\begin{enumerate}[(i)]\thmenumhspace
 \item If $R_\pi$ is a PID, the generator $\kappa^{\pi_B,\epsilon}$ is unique up to multiplication by an element in $R_\pi^{\ast}$.
       In particular, if the automorphic representation $\pi$ corresponds to a modular elliptic curve over $F$, then $\Q_\pi$ is equal to $\Q$ and thus, $\kappa^{\pi_B,\epsilon}$ is unique up to sign.
 \item We could weaken the assumptions on $\pi_\p$ for $\p \in \ram(B)$.
       It is enough to assume that $\pi_\p$ is either special or supercuspidal.
       But in the supercuspidal case there is no canonical local new vector for $\pi_{B,\p}$.
       To ease the exposition, we stick to the special case.
\end{enumerate}
\end{Remark}

Let $L/E$ be a finite anticyclotomic extension with Galois group $\mathcal{G}$ and let $\m$ be an $\mathfrak{f}(\pi_B)$-allowable ideal of $\OO_F$ that bounds the ramification of $L/E$.
\begin{Def}
The anticyclotomic Stickelberger element of modulus $\mathfrak{m}$ and sign $\epsilon$ associated to $\pi_B$ and $L/F$ is defined by $$\Theta_{\mathfrak{m}}(L/F,\pi_B)^{\epsilon}= \Theta_{\mathfrak{m}}(L/F,\kappa^{\pi_B,\epsilon}) \in R_\pi[\mathcal{G}].$$
\end{Def}

\begin{Remark}
The element $\Theta_{\mathfrak{m}}(L/F,\pi_B)^{\epsilon}$ depends on the choice of an $U_{T_{\p}}$-stable vertex and an end of the Bruhat-Tits tree for every prime $\p\notin\ram(B)$. If we take different choices, $\Theta_{\mathfrak{m}}(L/F,\pi_B)^{\epsilon}$ is multiplied by an element of $\G$.
Therefore, the element
$$\mathfrak{L}_{\mathfrak{m}}(L/F,\pi_B)^{\epsilon}=\Theta_{\mathfrak{m}}(L/F,\pi_B)^{\epsilon}\cdot (\Theta_{\mathfrak{m}}(L/F,\pi_B)^{\epsilon})^{\vee}\in R_{\pi}[\G]$$
is independent of these choices.
\end{Remark}

Next, we study the behaviour of Stickelberger elements under change of modulus.

\begin{Theorem}[Norm relations] \label{compatibility}
\begin{enumerate}[(i)]\thmenumhspace
 \item Let $\mathfrak{p}$ be a finite place of $F$ that does not divide $\mathfrak{m}$.
       Write $\sigma_{\P}$ for the the image of the uniformizer $\varpi_{\P}$ under the Artin reciprocity map $\rec_{L/E}$.
       Then the equality
       \begin{align*}
	\Theta_{\mathfrak{m}\mathfrak{p}}(L/F,\pi_B)^{\epsilon} = (\lambda_{\mathfrak{p}}-(\ast))\Theta_{\mathfrak{m}}(L/F,\pi_B)^{\epsilon}
       \end{align*}
holds with
       \begin{align*}
	(\ast) = \begin{cases}
		  0 								& \mbox{if $\p$ is inert in $E$,} \\
		  \cf_{\p}(\mathfrak{f}(\pi_{B}))\sigma_{\P}  		& \mbox{if $\p$ is ramified in $E$,} \\
		  \sigma_{\P}\inv + \cf_{\p}(\mathfrak{f}(\pi_{B}))\sigma_{\P} 	& \mbox{if $\p$ is split in $E$.}
		 \end{cases}
	\end{align*}
 \item Let $\mathfrak{p}$ be a finite place of $F$ that does divide $\mathfrak{m}$ and write $m=\ord_{\p}(\m)$.
       Then we have a decomposition
       \begin{align*}
	\Theta_{\mathfrak{m}\mathfrak{p}}(L/F,\pi_B)^{\epsilon} =\lambda_{\mathfrak{p}}\Theta_{\mathfrak{m}}(L/F,\pi_B)^{\epsilon}\ +\ \cf_{\p}(\mathfrak{f}(\pi_{B})) v_{\mathfrak{m}}(\Theta_{\m\p\inv}(L/F,\pi_B)^{\epsilon}),
       \end{align*}
       where the elements $v_{\mathfrak{m}}(\Theta_{\m\p\inv}(L/F,\pi_B)^{\epsilon})$ can be characterized by the following properties:
       \begin{itemize}
	\item $\pi_{L/L^{\prime}}(v_{\mathfrak{m}}(\Theta_{\m\p\inv}(L/F,\pi_B)^{\epsilon})=v_{\mathfrak{m}}(\Theta_{\m\p\inv}(L^{\prime}/F,\pi_B)^{\epsilon})$ for all intermediate extensions $L^{\prime}$ of $L/F$
	\item $v_{\mathfrak{m}}(\Theta_{\m\p\inv}(L/F,\pi_B)^{\epsilon})= [U_{T_\p}^{(m-1)}:U_{T_\p}^{(m)}](\Theta_{\m\p\inv}(L/F,\pi_B)^{\epsilon})$ in case the Artin reciprocity map for $L/E$ is trivial on $U_T(\m\p\inv)$
	\item Let $k$ be a field which is an $R_\pi$-algebra and $\chi\colon \mathcal{G}\to k^{\ast}$ a character such that $\chi_{\p}$ has conductor $\mathfrak{p}^{m}$.
	      Then we have
	      \begin{align*}
	       \chi(v_{\mathfrak{m}}(\Theta_{\m\p\inv}(L/F,\pi_B)^{\epsilon}))=0.
	      \end{align*}
       \end{itemize}
       \item Suppose that $\p$ is inert and divides $\m$ as well as $\mathfrak{f}(\pi_{B})$ exactly once.
	     Let $k$ be an $R_\pi$-algebra and $\chi\colon \G\to k^{\ast}$ a character which is unramified at $\p$.
	     Then we have
	     $$\chi(\Theta_{\mathfrak{m}}(L/F,\pi_B)^{\epsilon})=0.$$
\end{enumerate}
\end{Theorem}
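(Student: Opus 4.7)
The plan is to deduce all three parts from the local norm relations of Lemma \ref{localnorm}, pushed through the cap product with the reciprocity class $c_L$. Writing $\Theta_{\m'}(L/F,\pi_B)^\epsilon = c_L\cap\Delta_{\m'}(\kappa^{\pi_B,\epsilon})$, the pullback $\Delta_{\m'}(\kappa)$ is encoded at a finite place $\p\notin\ram(B)$ by $\partial_m$ with $m = \ord_\p(\m')$, applied to $\kappa$ viewed via \eqref{Ecken} as a function on non-backtracking paths of length $n = \ord_\p(\mathfrak{f}(\pi_B))$. Two facts make the translation to Stickelberger elements mechanical: integration against any character of $T_\p/U_{T_\p}^{(m)}$ commutes with the pushforward $(\pi_m)_\ast$, and the $T_\p$-action by $\varpi_\P^{\pm 1}$ on distributions becomes, after the cap product with $c_L$, left multiplication by $\sigma_\P^{\pm 1} = \rec_{L/E}(\varpi_\P)^{\pm 1}$.

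Parts (i) and (ii) then follow by applying the Hecke eigenvalue equation $\mathbb{T}_\p\kappa = \lambda_\p\kappa$ to $\partial_m\kappa$ and invoking the appropriate case of Lemma \ref{localnorm}: case (ii) when $\p\nmid\m$ (i.e.\ $m=0$), which reproduces the split, ramified and inert sub-cases of part (i); case (i) when $\p\mid\m$ (i.e.\ $m\geq 1$), for part (ii). The term $(\pi_m)_\ast\partial_{m+1}\kappa$ contributes $\Theta_{\m\p}(L/F,\pi_B)^\epsilon$, the terms $\varpi_\P^{\pm 1}\partial_0\kappa$ contribute $\sigma_\P^{\pm 1}\Theta_\m(L/F,\pi_B)^\epsilon$, and the term $\cf_\p(\p^n)(\pi_{m-1})^\ast\partial_{m-1}\kappa$ is taken as the definition of $v_\m(\Theta_{\m\p^{-1}}(L/F,\pi_B)^\epsilon)$.

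The three characterizing properties of $v_\m(\Theta_{\m\p^{-1}}(L/F,\pi_B)^\epsilon)$ are verified directly from the definition of the pullback $(\pi_{m-1})^\ast$ along $T_\p/U_{T_\p}^{(m)}\to T_\p/U_{T_\p}^{(m-1)}$: functoriality gives compatibility with the projections $\pi_{L/L'}$; pairing $(\pi_{m-1})^\ast f$ against $\chi$ trivial on $U_T(\m\p^{-1})$ produces the index $[U_{T_\p}^{(m-1)}:U_{T_\p}^{(m)}]$; and for $\chi$ of exact $\p$-conductor $\p^m$ the inner character sum over the non-trivial cyclic quotient $U_{T_\p}^{(m-1)}/U_{T_\p}^{(m)}$ vanishes. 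These three properties pin $v_\m(\Theta_{\m\p^{-1}}(L/F,\pi_B)^\epsilon)$ down uniquely by reduction to finite-level Galois groups.

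Part (iii) is an application of Lemma \ref{localnorm}(iii): evaluating its identity at $\kappa$ and pairing with an unramified character $\chi$ (which factors through $T_\p/U_{T_\p}$ and is therefore invariant under $(\pi_0)_\ast$) cancels the term $\chi(\partial_1\kappa)$ on both sides, leaving $\lambda_\p\omega_\p\cdot\chi(\Theta_\m(L/F,\pi_B)^\epsilon) = 0$, where $\omega_\p$ is the Atkin-Lehner eigenvalue of $\kappa$ at $\p$. Since $\pi_{B,\p}$ is a special representation of conductor $\p$, the product $\lambda_\p\omega_\p$ is a unit, whence the claimed vanishing follows. The main technical issues are the sign bookkeeping in (ii), where a fixed convention governs the sign with which the correction term $v_\m$ enters, and verifying in (iii) the standard representation-theoretic fact that $\lambda_\p\omega_\p\neq 0$ for a special representation of conductor $\p$.
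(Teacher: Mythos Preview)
Your proposal is correct and follows the same route as the paper, which simply invokes Lemma~\ref{localnorm} and, for part~(iii), the representation-theoretic fact about the $\mathbb{T}_\p\circ W_\p$-eigenvalue on the new vector. The only difference worth flagging is in part~(iii): the paper uses that this eigenvalue is exactly $-1$, so the left-hand side of Lemma~\ref{localnorm}(iii) applied to $\kappa$ vanishes identically, forcing $(\pi_0)_\ast\partial_1(\kappa)=0$ before any pairing; you instead pair first and then cancel, needing only that $\lambda_\p\omega_\p$ is a unit. Both are fine, and the input fact is the same (for a twist of Steinberg one has $\lambda_\p\omega_\p=-1$), but the paper's phrasing avoids having to argue separately that the eigenvalue is nonzero.
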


\begin{proof}
This is a direct consequences of the local norm relations of Lemma \ref{localnorm}.
For part (iii), note that the local representation at $\pi_\p$ is a (twisted) Steinberg representation and thus, the eigenvalue of $\mathbb{T}_\p\circ W_\p$ on a local new vector is $-1$.
\end{proof}

In the following we use the same notation as in the discussion before Lemma \ref{vanish2}.
Let $S_\St$ (resp.~$S_\tw$) be the set of finite places $\p$ of $F$ which are disjoint from $\ram(B)$ (and inert in $E$) such that the local component $\pi_\p$ is the (twisted) Steinberg representation.
As always, we set $S=S_\St\cup S_\tw$.
For every subset $\mathfrak{S}\subseteq S$ let $t_{\mathfrak{S}}\in \Z$ be the product of the exponent of the $2$-torsion subgroup of $\mathcal{M}(\n;R_{\pi})^{\epsilon}$ and the exponent of the torsion subgroup of
\begin{align*}
\bigoplus_{\p\in \mathfrak{S}} \mathcal{M}(\n\p^{-1};R_{\pi})^{\epsilon}.
\end{align*}
In the case $d=0$ the above cohomology groups are torsion-free and hence, $t_{\mathfrak{S}}=1$. 
If $d>0$, we define $c_{\mathfrak{S}}=\gcd\left\{\prod_{\p\in \mathfrak{S}^{\prime}}(N(\p)+1)\mid \mathfrak{S}^{\prime}\subset \mathfrak{S} \mbox{ with } \left|\mathfrak{S}\right|=\left|\mathfrak{S}^{\prime}\right|+1\right\}$. For $d=0$ we simply put $c_{\mathfrak{S}}=1$. Finally, we define $n_{\mathfrak{S}}=c_{\mathfrak{S}}\cdot\ t_{\mathfrak{S}}$.

\begin{Theorem}[Order of vanishing] \label{ordvanish}
For every anticyclotomic extension $L/E$ and every $\mathfrak{f}(\pi_B)$-allowable modulus $\m$ that bounds the ramification of $L/E$ we have
$$n_{S_\m}\Theta_{\mathfrak{m}}(L/F,\pi_B)^{\epsilon}\in \left(\prod_{v\in S_{\infty}} I_{v}^{-\epsilon_{v}(-1)} \cdot \prod_{\p \in S_\mathfrak{m}} I_{\p}\right)\otimes R_{\pi}.$$
\end{Theorem}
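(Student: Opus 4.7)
The plan is to reduce the statement to Lemma \ref{vanish2} by producing an integral $(S_\St,S_\tw)$-special modular symbol that lifts an appropriate multiple of $\kappa^{\pi_B,\epsilon}$ through the evaluation map. Concretely, I would try to construct $\kappa'\in \mathcal{M}(\n,S_\St,S_\tw;R_\pi)^\epsilon$ with $\Ev_{S_\St,S_\tw}(\kappa')=n_{S_\m}\kappa^{\pi_B,\epsilon}$. Once such a $\kappa'$ is available, applying Lemma \ref{vanish2} to $n_{S_\m}\kappa^{\pi_B,\epsilon}$ together with the character $\epsilon$ yields the conclusion, since the ideal factors $I_v^{-\epsilon_v(-1)}$ at infinite places come out of the $\psi_v$-twist in Proposition \ref{vanish1} and the ideal factors $I_\p$ come out of the $c_{\dd\chi_{\p}}$-classes at the finite primes in $S_\m$.

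Next I would address the lifting. Fix $\p\in S_\m$ and observe that $\pi_{B,\p}$ is (a twist of) the Steinberg representation, so its local $\Gamma_0(\p)$-new vector sits inside the Steinberg (or twisted Steinberg) representation through the $G_\p$-equivariant exact sequences
\[
0\too \St_\p\too C^0(\E_{\p,1},\Z)\too C^0(\V_\p,\Z)\too 0
\]
and its twisted analogue. Dualizing and taking cohomology produces a long exact sequence comparing $\mathcal{M}(\n,S_\St,S_\tw;N)^\epsilon$ to $\mathcal{M}(\n,S_\St\smallsetminus\{\p\},S_\tw;N)^\epsilon$ and to $\mathcal{M}(\n\p\inv,S_\St\smallsetminus\{\p\},S_\tw;N)^\epsilon$, with the connecting morphism governed by $\mathbb{T}_\p \pm 1$ (resp.\ $\mathbb{T}_\p\circ W_\p+1$ in the twisted case). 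On the $\pi$-isotypical part these operators become multiplication by a unit after inverting $N(\p)+1$, so integrally the obstruction to lifting at $\p$ is annihilated by $N(\p)+1$; by strong multiplicity one the rational lift in the $\pi$-component is unique up to scalar, which allows us to paste these local lifts into a global one. The torsion in the relevant modular symbol groups contributes the factor $t_{S_\m}$ when passing from a rational lift to an integral one.

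To extract the gcd $c_{S_\m}$ rather than the full product, I would run the inductive lifting not through all of $S_\m$ at once, but through every co-dimension-one subset $S''\subset S_\m$ with $|S''|=|S_\m|-1$ separately. Each choice yields an integral lift after multiplication by $t_{S_\m}\cdot\prod_{\p\in S''}(N(\p)+1)$ (the missing prime being handled via a parallel construction that contributes a different factor). The modular symbols obtained from these various liftings all map to the same rational element $\kappa^{\pi_B,\epsilon}$ in the one-dimensional $\pi$-isotypic quotient, so a standard Bezout argument on integers shows that the multiplier can in fact be reduced to the gcd $c_{S_\m}\cdot t_{S_\m}=n_{S_\m}$, which is the content of the theorem.

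The hardest step will be the careful bookkeeping of the integral structure across the inductive lifts, in particular: (i) verifying that the connecting operators in the long exact sequence really are $(N(\p)+1)\mathbb{T}_\p$-type on the $\pi$-isotypical component (this amounts to a local Hecke relation at Steinberg primes and requires separate treatment for $\p\in S_\tw$, where the sign twist from Lemma \ref{comptwist} reverses a crucial sign), and (ii) proving the gcd reduction rigorously, since it depends on the liftings constructed via different orderings being compatible modulo torsion. Once these are in hand, the conclusion follows mechanically from Proposition \ref{vanish1} via Lemma \ref{vanish2}.
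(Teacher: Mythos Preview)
Your plan is essentially the paper's own argument: reduce to Lemma~\ref{vanish2} by producing an $(S_\St,S_\tw)$-special lift of $n_{S_\m}\kappa^{\pi_B,\epsilon}$ through $\Ev_{S_\St,S_\tw}$. The paper proceeds more tersely---it notes that $\Ev_{S_\St,S_\tw}$ is an isomorphism on the one-dimensional $\Q_\pi$-isotypic spaces (citing \cite{Sp}, Proposition~5.8), deduces finiteness of the integral cokernel, and then defers the bound ``$n_{S_\m}$ annihilates this cokernel'' to \cite{BG}, Lemma~2.9---whereas you have sketched the mechanics behind that bound (the Steinberg short exact sequence, the $(N(\p)+1)$ obstruction, the torsion factor $t_{S_\m}$, and the gcd/B\'ezout reduction to $c_{S_\m}$).
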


\begin{proof}
It is easy to see that the map 
\begin{align*}
 \Ev_{S_\St,S_\tw} \colon \mathcal{M}(\mathfrak{f}(\pi_{B}),S_\St,S_\tw;\Q_\pi)^{\epsilon,\pi}\too \mathcal{M}(\mathfrak{f}(\pi_{B});\Q_\pi)^{\epsilon,\pi}
\end{align*}
is an isomorphism of one-dimensional $\Q_\pi$-vector spaces (cf. \cite{Sp}, Proposition 5.8, for a proof in the Hilbert modular setting).
Therefore, the map
\begin{align*}
 \mathcal{M}(\mathfrak{f}(\pi_{B}),S_\St,S_\tw;R_\pi)^{\epsilon,\pi}\too \mathcal{M}(\mathfrak{f}(\pi_{B});R_\pi)^{\epsilon,\pi}
\end{align*}
has finite cokernel. As in \cite{BG}, Lemma 2.9, one can show that $n_{S_\m}$ annihilates this cokernel.
Therefore, the claim is a direct consequence of Lemma \ref{vanish2}.
\end{proof}

As a direct consequence of Lemma \ref{funceq} we get the following

\begin{Proposition}[Functional equation] \label{globfunceq}
Suppose that every $\p\in\ram(B)$ is inert in $E$ and that we can decompose $\mathfrak{f}(\pi_{B})=\n_1 \n_2$ with $\n_1$ coprime to $\m$ and $\n_2 \mid \m$.
Let $\varepsilon$ be the root number of $\pi$ and $\varepsilon_{\n_2}$ the product of the local root numbers of primes dividing $\n_2$.
Then the equality
\begin{align*}
 (\Theta_\m(L/F,\pi_B)^\epsilon)^\vee = \epsilon(-1) \cdot \varepsilon \cdot\varepsilon_{n_2} \cdot \Theta_\m(L/F,\pi_B)^\epsilon
\end{align*}
holds up to multiplication with an element in $\G$.
\end{Proposition}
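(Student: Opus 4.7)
The plan is to deduce Proposition \ref{globfunceq} directly from Lemma \ref{funceq}, applied to the new-vector modular symbol $\kappa^{\pi_B,\epsilon}$ of level $\n:=\mathfrak{f}(\pi_B)$. Writing $\n_1=\prod_{i=1}^r \p_i^{n_i}$, I would first check the eigenvector hypotheses of Lemma \ref{funceq}: at each $\p_i\nmid\ram(B)$ the symbol $\kappa^{\pi_B,\epsilon}$ is locally a new vector, so $W_{\p_i^{n_i}}$ acts by the scalar $\omega_{\p_i}$; at each $\p\in\ram(B)$ the local representation $\pi_{B,\p}$ is one-dimensional and, as recalled in Section \ref{JacquetLanglands}, $W_\p$ acts by the scalar $-\varepsilon_\p(\pi)$, the negative of the local root number of $\pi_\p$.

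Lemma \ref{funceq} then yields, up to an element of $\G$,
\begin{align*}
\Theta_\m(L/F,\pi_B)^\vee = (-1)^d\,\epsilon(-1)\,\biggl(\prod_{\p\mid\n_1}\omega_\p\biggr)\biggl(\prod_{\p\in\ram(B)}(-\varepsilon_\p(\pi))\biggr)\cdot\Theta_\m(L/F,\pi_B),
\end{align*}
so the proof reduces to identifying this constant with $\varepsilon\cdot\varepsilon_{\n_2}$. Decomposing the global root number into local factors (and noting that unramified primes contribute trivially) gives
\begin{align*}
\varepsilon \;=\; \prod_{v\in S_\infty}\varepsilon_v(\pi)\,\cdot\,\prod_{\p\in\ram(B)}\varepsilon_\p(\pi)\,\cdot\,\prod_{\p\mid\n_1}\varepsilon_\p(\pi)\,\cdot\,\varepsilon_{\n_2}.
\end{align*}
Since each $\pi_v$ is discrete series of weight two, $\varepsilon_v(\pi)=-1$, so the archimedean factor equals $(-1)^{[F:\Q]}$.

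The remaining task is to reconcile the cohomological factor $(-1)^d$ with $(-1)^{[F:\Q]}$ via the signs contributed at $\ram(B)$. By the hypotheses on $B$, the set of places where $B$ ramifies is precisely $\ram(B)$ together with the $[F:\Q]-d$ archimedean places of $F$ non-split in $E$; since a quaternion algebra ramifies at an even number of places, $|\ram(B)|\equiv [F:\Q]-d\pmod 2$, whence $(-1)^d\prod_{\p\in\ram(B)}(-1)=(-1)^{|\ram(B)|+d}=(-1)^{[F:\Q]}$. Combining this with the standard newform identity $\omega_\p=\varepsilon_\p(\pi)$ at primes $\p\mid\n_1$ (valid for newforms on $\PGL_2$ with trivial central character) converts the product of signs coming from Lemma \ref{funceq} into $\varepsilon\cdot\varepsilon_{\n_2}^{-1}=\varepsilon\cdot\varepsilon_{\n_2}$, as required. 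The main obstacle is not conceptual but the careful bookkeeping of local signs, in particular the parity relation between $|\ram(B)|$ and $[F:\Q]-d$, which encodes the Brauer-theoretic obstruction to the global existence of the embedding $\iota\colon E\hookrightarrow B$.
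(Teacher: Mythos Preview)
Your proposal is correct and follows exactly the route the paper intends: the paper simply states that Proposition \ref{globfunceq} is a direct consequence of Lemma \ref{funceq}, and you have carried out the sign bookkeeping that this entails. The key identifications you make --- the Atkin--Lehner eigenvalue $\omega_\p$ with the local root number $\varepsilon_\p(\pi)$ for $\p\mid\n_1$, the eigenvalue $-\varepsilon_\p(\pi)$ at $\p\in\ram(B)$, the archimedean contribution $(-1)^{[F:\Q]}$, and the parity relation $|\ram(B)|\equiv [F:\Q]-d \pmod 2$ coming from the global constraint on $B$ --- are precisely the ingredients needed and are all correct.
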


\begin{Corollary}[Parity]\label{parity}
Suppose that every $\p\in\ram(B)$ is unramified in $E$ and that there is a decomposition $\mathfrak{f}(\pi_{B})=\n_1 \n_2$ with $\n_1$ coprime to $\m$ and $\n_2 \mid \m$. If $\ord_{R_\pi[\frac{1}{2}]}(\Theta_{\mathfrak{m}}(L/F,\pi_B))=r<\infty$ holds, we have
\begin{align*}
(-1)^{r}=\epsilon(-1)\cdot\varepsilon \cdot\varepsilon_{n_2}.
\end{align*}
\end{Corollary}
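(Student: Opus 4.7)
The plan is to deduce the parity statement directly from the functional equation of Proposition \ref{globfunceq} by comparing the action of the inversion involution $\vee$ with the natural grading on the augmentation filtration of $R_\pi[\tfrac{1}{2}][\mathcal{G}]$.

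The key algebraic input is the following observation: writing $I=I_{R_\pi[\tfrac{1}{2}]}(\mathcal{G})$, the involution $\vee$ acts on the graded piece $I^r/I^{r+1}$ as multiplication by $(-1)^r$. This follows from the identity $(\sigma-1)^\vee=\sigma\inv-1=-\sigma\inv(\sigma-1)$ for $\sigma\in\mathcal{G}$, since $I^r$ is spanned by products $(\sigma_1-1)\cdots(\sigma_r-1)$ and $\sigma\inv\equiv 1\bmod I$; each factor contributes a sign, while the $\sigma\inv$ factors can be absorbed modulo $I^{r+1}$.

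Next, I would apply Proposition \ref{globfunceq}, which gives $\Theta_\m(L/F,\pi_B)^\vee=g\cdot\epsilon(-1)\cdot\varepsilon\cdot\varepsilon_{n_2}\cdot\Theta_\m(L/F,\pi_B)$ for some $g\in\mathcal{G}$. Since $g-1\in I$ and $\Theta_\m=\Theta_\m(L/F,\pi_B)\in I^r$ by assumption, we have $g\cdot\Theta_\m\equiv\Theta_\m\pmod{I^{r+1}}$. Combined with the graded piece computation, this yields
$$(-1)^r\,\Theta_\m\ \equiv\ \epsilon(-1)\cdot\varepsilon\cdot\varepsilon_{n_2}\cdot\Theta_\m \pmod{I^{r+1}},$$
so that $\bigl((-1)^r-\epsilon(-1)\,\varepsilon\,\varepsilon_{n_2}\bigr)\cdot\Theta_\m\in I^{r+1}$.

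Finally, the scalar $(-1)^r-\epsilon(-1)\,\varepsilon\,\varepsilon_{n_2}$ is either $0$ or $\pm 2$, since the signs are each in $\{\pm 1\}$. Because $2$ is invertible in $R_\pi[\tfrac{1}{2}]$, the non-zero case would force $\Theta_\m\in I^{r+1}$, contradicting $\ord_{R_\pi[\tfrac{1}{2}]}(\Theta_\m)=r$. Hence the scalar vanishes, giving the required identity. The only non-formal ingredient is the invertibility of $2$, which is precisely why the statement is formulated over $R_\pi[\tfrac{1}{2}]$; modulo this issue, the argument is a direct comparison of the functional equation with the sign of inversion on $\mathrm{gr}^\bullet R_\pi[\tfrac{1}{2}][\mathcal{G}]$.
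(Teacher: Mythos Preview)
Your argument is correct and is exactly the standard deduction the paper has in mind; the paper states Corollary~\ref{parity} without proof as an immediate consequence of Proposition~\ref{globfunceq}, and your computation of the action of $\vee$ on $I^{r}/I^{r+1}$ together with the observation that the discrepancy is $0$ or $\pm 2$ is the intended reasoning.
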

\begin{proof}
This follows from the fact that inversion acts as multiplication by $(-1)^{r}$ on $I_{R_\pi[\frac{1}{2}]}^r/I_{R_\pi[\frac{1}{2}]}^{r+1}$.
\end{proof}

\subsection{Interpolation formulae} \label{Interpolation}
We relate anticyclotomic Stickelberger elements to special values of $L$-functions. The crucial input is a computation of toric period integrals by File, Martin and Pitale (cf.~\cite{FMP}).
We keep the notations from the previous section.

Let $\chi_{E/F}\colon \Gal(E/F)\to \C^{\ast}$ be the non-trivial character. 
Given a character $\chi\colon\G\to\C^{\ast}$ and a finite place $\p$ of $F$ we denote by $\varepsilon(1/2,\pi_{E,\p}\otimes \chi_\p)$ the local epsilon factor of the base change of $\pi$ to $\PGL_2(E)$ twisted by $\chi$.
Here we view characters $\chi$ as characters on $T(\A)$ via the Artin reciprocity map.
We say that $\chi$ fulfills the Saito-Tunnell condition with respect to $B$ if for all finite places $\p$ of $F$ the following equality holds:
$$\varepsilon(1/2,\pi_{E,\p}\otimes \chi_\p)= \chi_{E/F,\p}(-1)\INV(B_\p)$$
Here $\INV(B_\p)\in\left\{\pm 1\right\}$ denotes the local invariant of $B$ at $\p$. By our assumptions on the splitting behaviour of $B$ there is no condition at the Archimedean places. 

Given any automorphic representation $\widetilde{\pi}$ of a reductive algebraic group over $F$ and a finite set $S$ of places of $F$ we write
$L^{S}(s,\widetilde{\pi})$ for the $L$-function without the Euler factors at places in $S$ and $L_{S}(s,\widetilde{\pi})$ for the product of the Euler factors of places in $S$.

Let $S(\pi)$ be the set of finite places at which $\pi$ is ramified. For a character $\chi$ as above we set $S(\chi)$ to be the set of finite places at which $\chi$ is ramified. Finally, let $\Sigma(\pi,\chi)$ be the set of all finite places $\p$ such that either the local conductor of $\pi$ at $\p$ is greater than one or the local conductor of $\pi$ at $\p$ is exactly one, $E/F$ is ramified at $\p$ and $\chi_\p$ is unramified. The ramification index of $E/F$ at a prime $\p$ will be denoted by $e_{\p}(E_\p/F_\p)$.

\begin{Theorem}
There exists a constant $C\in\C^{\ast}$ such that for all $\mathfrak{f}(\pi_B)$-allowable moduli $\m$ and all characters $\chi\colon \G\to \C^{\ast}$ of exact conductor $\m$ with $\chi_\infty=\epsilon$ we have
\begin{align*}
\chi(\mathfrak{L}_{\mathfrak{m}}(L/F,\pi_B)^{\epsilon})=
C\ &\frac{[U_T:U_T(\m)]^{2}}{N(\m)} L_{S(\chi)}(1,\eta) L_{S(\pi)\cup S(\chi)}(1,\eta) L_{S(\pi)\cap S(\chi)}(1,1_F)\\
&\times \prod_{\p \in S(\pi)\cap S(\chi)^{c}} \hspace{-1.5em}e_{\p}(E_\p/F_\p)
\cdot \frac{L^{\Sigma(\pi,\chi)}(1/2,\pi_E\otimes\chi)}{L^{\Sigma(\pi,\chi)}(1,\pi,\Ad)},
\end{align*}
if $\chi$ fulfills the Saito-Tunnell condition and
\begin{align*}
\chi(\mathfrak{L}_{\mathfrak{m}}(L/F,\pi_B)^{\epsilon})= 0
\end{align*}
if $\chi$ does not fulfill the Saito-Tunnell condition.
\end{Theorem}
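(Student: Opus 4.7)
The plan is to re-express $\chi(\Theta_{\mathfrak{m}}(L/F,\pi_B)^{\epsilon})$ as a toric period integral of an automorphic form attached to $\kappa^{\pi_B,\epsilon}$ and then feed it into the explicit Waldspurger-type formula of File-Martin-Pitale. First I would unwind the definition of the Stickelberger element. Via the Matsushima/$(\mathfrak{g},K_\infty)$-cohomology comparison available for a quaternion algebra whose archimedean part is a product of split factors and compact factors, the class $\kappa^{\pi_B,\epsilon}\in \HH^d(G(F),\mathcal{A}(K(\mathfrak{f}(\pi_B)),R_\pi)(\epsilon))^{\pi}$ corresponds to a nonzero automorphic form $\phi^{\pi}\in\pi_B$, whose finite component is (up to the translation by $g_{\mathfrak{m}}$ built into $\Delta_{\mathfrak{m}}$) the local new vector of $\pi_B$ for the Eichler order $\mathcal{R}(\mathfrak{f}(\pi_B))$. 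Cap-product with the class $c_L$ constructed from $\rec_{L/E}$ in Section~\ref{Vanishing} is, after unfolding via Shapiro's lemma and the fundamental class $\eta$ for $T(F)^{+}$, nothing other than integration along the finite idele class group of $T$ modulo $U_T(\mathfrak{m})^{\infty}$ against $\rec_{L/E}$. Testing at $\chi$ then produces the toric period
\begin{equation*}
\chi(\Theta_{\mathfrak{m}}(L/F,\pi_B)^{\epsilon}) \;=\; c_0\,\int_{T(F)\backslash T(\A)}\phi^{\pi}(\iota(t))\,\chi(t)\,dt
\end{equation*}
up to a nonzero explicit constant $c_0$ coming from the comparison of cohomological and $L^2$-normalizations.

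Next I combine this with the behaviour under the involution: since $\chi$ is unitary we have $\chi(\Theta_{\mathfrak{m}}^{\vee})=\overline{\chi(\Theta_{\mathfrak{m}})}$, so
\begin{equation*}
\chi(\mathfrak{L}_{\mathfrak{m}}(L/F,\pi_B)^{\epsilon})\;=\;\bigl|\chi(\Theta_{\mathfrak{m}}(L/F,\pi_B)^{\epsilon})\bigr|^2.
\end{equation*}
The Tunnell--Saito dichotomy (in the form proved for $\PGL_2$ and realized in the FMP formula) immediately gives the vanishing statement: the right-hand side is zero unless the local epsilon factor of $\pi_{E,\p}\otimes\chi_\p$ matches the local invariant $\INV(B_\p)\cdot\chi_{E/F,\p}(-1)$ at every finite place. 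When the Saito-Tunnell condition holds, the main theorem of \cite{FMP} factors $|P_\chi(\phi^\pi)|^2/\langle\phi^{\pi},\phi^{\pi}\rangle$ as a product of explicitly computed local toric integrals times $L^{\Sigma(\pi,\chi)}(1/2,\pi_E\otimes\chi)/L^{\Sigma(\pi,\chi)}(1,\pi,\Ad)$. The set $\Sigma(\pi,\chi)$ is precisely the set of places where the local test vector used in the pullback $\Delta_{\mathfrak{m}}$ differs from an explicitly computable one, and collecting the local factors at places outside $\Sigma(\pi,\chi)$ produces the displayed combination $N(\mathfrak{m})^{-1}L_{S(\chi)}(1,\eta)L_{S(\pi)\cup S(\chi)}(1,\eta)L_{S(\pi)\cap S(\chi)}(1,1_F)\prod_{\p\in S(\pi)\cap S(\chi)^{c}} e_\p(E_\p/F_\p)$. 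The global Petersson norm $\langle\phi^{\pi},\phi^{\pi}\rangle$ together with the factor $c_0^2$ coming from the cohomology-versus-automorphic normalization contributes the unique nonzero global constant $c$, which by construction is independent of $\mathfrak{m}$ and $\chi$.

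The main obstacle will be the test-vector matching. The class $\kappa^{\pi_B,\epsilon}$ is chosen via a purely cohomological integrality condition, and its pullback $\Delta_{\mathfrak{m}}$ uses the specific basepoints $g_{\mathfrak{m}}=(e_\p)_\p$ on the Bruhat-Tits trees determined by the chosen ends $[w_\infty]$; one has to check at every finite prime, in particular at those in $\ram(B)$ and at ramified primes of $E/F$, that this coincides with an Eichler-style new vector whose local toric integral is explicitly computed in \cite{FMP}. Once this local dictionary is established, the archimedean part is trivial by our hypothesis on the splitting of $B$ (no archimedean Saito-Tunnell obstruction) and assembling the local factors gives the theorem. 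Uniform independence of the global constant $c$ from $\mathfrak{m}$ and $\chi$ follows from the fact that at every place outside $S(\pi)\cup S(\chi)$ the local test vector is the spherical new vector, whose toric integral produces only the advertised unramified $L$-factors.
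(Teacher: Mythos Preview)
Your proposal is correct and follows essentially the same route as the paper: identify $\chi(\mathfrak{L}_{\mathfrak{m}}(L/F,\pi_B)^{\epsilon})$ (up to a nonzero constant) with $|P_B(g_{\mathfrak{m}}.\Phi,\chi)|^{2}$ for the global new vector $\Phi$, invoke Saito--Tunnell for the vanishing, and read off the explicit local factors from \cite{FMP}. The paper's own proof is in fact terser than yours---it simply records the toric-period identity ``by standard arguments'' and cites \cite{FMP} for the rest---so your sketch, including the remark that the test-vector matching at primes in $\ram(B)$ and ramified primes of $E/F$ is the only substantive check, is a faithful expansion of it.
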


\begin{proof}
The group cohomology of a discrete group is naturally isomorphic to the singular cohomology of its associated classifying space.
If the classifying space is a manifold, its singular cohomology with complex coefficients is isomorphic to its de Rham cohomology.
Thus, by invoking Shapiro`s Lemma we get an isomorphism
$$\ES\colon \mathcal{M}(K;\C)^{\epsilon}\too \HH_{\dR}^{d}(G(F)\backslash (G(\A^{\infty})/K\times \mathbb{H}^{d}),\C(\epsilon)).$$
By Matsushima's formula the space $\HH_{\dR}^{d}(G(F)\backslash (G(\A^{\infty})/K\times \mathbb{H}^{d}),\C(\epsilon))$ is generated by cohomological automorphic forms.
In particular, the image of $\kappa^{\pi_B,\epsilon}$ under $\ES$ is the differential form associated with a global (cohomological) new vector $\Phi$ of $\pi_B$.
The fact that the above identifications of cohomology groups behave well under pullback and cup products together with Remark \ref{fundamentaltori} implies that
$$\chi(\Theta_{\mathfrak{m}}(L/F,\pi_B)^{\epsilon})=[U_T:U_T(\m)] P_B(g_\m.\Phi,\chi)$$
holds up to multiplication by a non-zero constant, which is independent of $\chi$ and $\m$.
Here $g_\m\in G(\A^{\infty})$ is the element chosen at the end of Section \ref{Pullback} and
$$P_B(\phi,\chi)=\int_{T(F)\backslash T(\A)}\phi(t)\chi(t)\ dt$$
denotes the global toric period integral of $\phi\in\pi$.
Therefore, we also get the formula
$$\chi(\mathfrak{L}_{\mathfrak{m}}(L/F,\pi_B)^{\epsilon})=[U_T:U_T(\m)]^{2} |P_B(g_\m.\Phi,\chi)|^{2}$$
up to multiplication with a non-zero constant.

The second assertion follows from the vanishing criterion of toric periods integrals by Saito and Tunnell (see \cite{Sa} and \cite{Tu}).
Since $g_\m.\Phi$ is a test vector in the sense of \cite{FMP}, $\S$ 7.1, the first assertion follows from the main theorem of \textit{loc.cit.}
\end{proof}

\section{L-Invariants}
In Section 3.7 of \cite{Sp}, Spie\ss~constructs extensions of the Steinberg representation associated to characters of the multiplicative group of a $p$-adic field.
Such extensions were already constructed by Breuil in \cite{Br} in case the character under consideration is a branch of the $p$-adic logarithm.
After introducing a slightly improved version of Spie\ss' construction we will use it to give formulas for the leading term of anticyclotomic Stickelberger elements in the analytic rank zero situation.
Finally, we will relate the extension classes to a class coming from the $p$-adic upper half plane. This in turn allows us to recast the uniformization of Jacobians of certain Mumford curves purely in representation theoretic terms and proof the equality of arithmetic and automorphic $\LI$-invariants in certain cases.

\subsection{Extensions of the Steinberg representation} \label{Extension}
Let us fix a finite place $\p$ of $F$ which is split in $E$.
Further, let $R$ be a ring and $N$ a prodiscrete $R$-module.
We define the $N$-valued Steinberg representation by $$\St_\p(N)=C(\Ends,N)/N.$$
A continuous homomorphism $f\colon N\to N^{\prime}$ between prodiscrete $R$-modules induces a homomorphism $$f_\ast\colon \St_\p(N)\too \St_\p(N^{\prime}).$$
The canonical map $$\St_\p\otimes N\too\St_\p(N)$$ is an isomorphism if $N$ is discrete.
In this case, the map \eqref{deltasp} induces a $T_\p$-equivariant isomorphism
\begin{align*}
 \delta_{\p,N}\colon C_{c}^{0}(F_\p,N)\too \St_\p(N).
\end{align*}
 
Let $U_\p$ be the unipotent radical of $\Stab_{B_\p^{\ast}} (o_{\P^\tau})$, i.e.~we have $\Stab_{B_\p^{\ast}} (o_{\P^\tau})= E_\p^{\ast}\ U_\p$.
As before, the choice of the prime $\P$ lying above $\p$ gives rise to an identification $E_\p^{\ast}\cong F_{\p}^{\ast}\times F_{\p}^{\ast}$.
For a continuous homomorphism $l_\p \colon F_\p^{\ast} \to N$ we define $\widetilde{\mathcal{E}}(l_\p)$ as the set of pairs $(\varphi,y) \in C(B_\p^{\ast},N) \times R$ with
\begin{align*}
 \varphi \left( g u (t_1,t_2)\right)
		= \varphi(g) + y\cdot l_\p(t_1)
\end{align*}
for all $g \in B_\p^{\ast}$, $u \in U_\p$ and $(t_1,t_2) \in F_{\p}^{\ast}\times F_{\p}^{\ast}\cong E_\p^{\ast}$.
The group $B_\p^{\ast}$ acts on $\widetilde{\mathcal{E}}(l_\p)$ via $$g.(\varphi(h),y) = (\varphi(g^{-1}  h),y).$$
The subspace $\widetilde{\mathcal{E}}(l_\p)_0$ of tuples of the type $(\varphi,0)$ with constant $\varphi$ is $B_\p^{\ast}$-invariant.
Hence, we get an induced action of $G_\p$ on the quotient $\mathcal{E}(l_\p)=\widetilde{\mathcal{E}}(l_\p) /\widetilde{\mathcal{E}}(l_\p)_0$.

\begin{Lemma} \label{SteinbergExt}
\begin{enumerate}[(i)]\thmenumhspace
 \item Let $\pi \colon G_\p \to \Ends$ be the projection given by $g \mapsto g[o_{\P^\tau}]$.
       The following sequence of $R[G_\p]$-modules is exact:
       \begin{align*}
	0 \too \St_\p(N) \xrightarrow{(\pi^{\ast},0)} \mathcal{E}(l_\p) \xrightarrow{(0,\id_R)} R \too 0
       \end{align*}
       We define $b_{l_\p}$ to be the associated cohomology class in $\HH^1(G_\p,\St_\p(N))$.
 \item\label{SE2} For every continuous homomorphism $f\colon N\to N^{\prime}$ between prodiscrete R-modules the equality $$b_{f\circ l_\p}=f_{\ast}(b_{l_\p})$$ holds.
 \item Suppose that $N$ is discrete. Then, for the cohomology class $c_{l_\p}$ defined in \eqref{zchip} we have
$$\delta_{\p,N}^\ast(b_{l_\p}) = c_{l_\p}.$$
\end{enumerate}
\end{Lemma}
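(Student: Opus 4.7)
I would treat the three parts in order, with (iii) being the real content.

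For (i), after checking that $(\pi^\ast, 0)$ is injective (immediate from surjectivity of $\pi$) and that the composition vanishes, the kernel of $(0, \id_R)$ consists of pairs $(\varphi, 0) \in \widetilde{\mathcal{E}}(l_\p)$ with $\varphi$ right-$P$-invariant, where $P = U_\p E_\p^\ast$; such $\varphi$ descend to continuous functions on $G_\p/\pi(P) = \Ends$, which modulo $\widetilde{\mathcal{E}}(l_\p)_0 \cong N$ is $\St_\p(N)$. The real task is to produce at least one lift $(\tilde{\varphi}, 1) \in \widetilde{\mathcal{E}}(l_\p)$, so that $(0, \id_R)$ is surjective. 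For this I would use that the coset space $B_\p^\ast/P \cong \PP(F_\p)$ is compact and totally disconnected: decompose it into finitely many clopen subsets, choose continuous local sections of the projection over each, and define $\tilde{\varphi}$ on the preimages by applying $l_\p$ to the ``$t_1$-part'' of the displacement from the chosen section. Continuity is then immediate from continuity of $l_\p$ together with clopen gluing, and $G_\p$-equivariance of the whole sequence is built into the definitions.

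For (ii), the map $f$ induces a morphism of short exact sequences via $(\varphi, y) \mapsto (f \circ \varphi, y)$, which is $G_\p$-equivariant and compatible with $f_\ast \colon \St_\p(N) \to \St_\p(N')$; naturality of the connecting homomorphism in group cohomology then yields $b_{f \circ l_\p} = f_\ast(b_{l_\p})$.

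For (iii), I would compute an explicit cocycle for $b_{l_\p}$ and match it with $c_{l_\p}$ under the $T_\p$-equivariant isomorphism $\delta_{\p, N}$. With $(\tilde{\varphi}, 1)$ as in (i), the cocycle representing $b_{l_\p}$ is $c(g)([v_\infty]) = \tilde{\varphi}(g^{-1} h) - \tilde{\varphi}(h)$ for any $h \in B_\p^\ast$ with $h[o_{\P^\tau}] = [v_\infty]$, the result being independent of the choice of $h$ by the transformation rule. Restricting to $x \in T_\p$ and evaluating at $[v_\infty] = \kappa(y)$ via a continuous family of representatives $h_y$ with $h_y[o_{\P^\tau}] = \kappa(y)$, a direct computation in $B_\p^\ast$ shows that $x^{-1} h_y = h_{x^{-1} y} \cdot (1, x^{-1})$ up to center, so the transformation rule gives $c(x)(\kappa(y)) = \tilde{\varphi}(h_{x^{-1} y}) - \tilde{\varphi}(h_y)$. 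A case analysis depending on which clopen chart from (i) contains $h_y$ and $h_{x^{-1} y}$---equivalently, on whether $y$ and $x^{-1} y$ lie in $\OO_{F_\p}$---then identifies this expression with the four-case formula \eqref{zchip} up to a $y$-independent constant, which is zero in $\St_\p(N)$. The main obstacle is precisely this bookkeeping: reconciling the three distinguished ends $[o_{\P^\tau}]$ (used in $\pi$), $[o_\P]$ and $[w_\infty]$ (used in $\kappa$), and tracking the piecewise definition of $\tilde{\varphi}$ from (i) through the cocycle calculation, is what makes the identification delicate, though ultimately only a finite case-check.
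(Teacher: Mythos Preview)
Your plan is sound and aligns with the paper's approach. The paper defers parts (i) and (iii) to Lemma~3.11 of \cite{Sp} and only spells out (ii), which it does exactly as you do: the map $(\varphi,y)\mapsto(f\circ\varphi,y)$ from $\widetilde{\mathcal{E}}(l_\p)$ to $\widetilde{\mathcal{E}}(f\circ l_\p)$ induces a morphism of extensions, so $\mathcal{E}(f\circ l_\p)$ is isomorphic to the pushout $f_\ast(\mathcal{E}(l_\p))$. Your phrasing via naturality of the connecting homomorphism is equivalent.

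One caution on your sketch for (iii): the identity ``$x^{-1}h_y=h_{x^{-1}y}\cdot(1,x^{-1})$ up to center'' is not automatic; it depends on the particular continuous section $y\mapsto h_y$ you choose, and in general the $P$-discrepancy will have a nontrivial $U_\p$-part and possibly a nontrivial $t_1$-component. The clean way to proceed is to fix an explicit two-chart section (e.g.\ in the $\GL_2$ model, $h_y=\left(\begin{smallmatrix}1&0\\y&1\end{smallmatrix}\right)$ over one affine chart and the analogous matrix over the other), compute the $P$-factor in each of the four cases, and read off both the $l_\p(t_1)$-contribution from the transformation rule and the $\tilde\varphi$-difference. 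Your final paragraph already anticipates that this bookkeeping is the crux, so this is a refinement rather than a gap.
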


\begin{proof}
Parts (i) and (iii) are essentially proven in Lemma 3.11 of \cite{Sp}. \\
For the proof of (ii) let $f_{\ast}(\mathcal{E}(l_\p))$ be the pushout of the following diagram:
\begin{center}
 \begin{tikzpicture}
    \path 	(0,0) 	 node[name=A]{$\St_{\p}(N)$}
		(3,0) 	 node[name=B]{$\mathcal{E}(l_\p)$}
		(0,-1.5) node[name=C]{$\St_{\p}(N^{\prime})$};
    \draw[->] (A) -- (B) node[midway, above]{$(\pi^{\ast},0)$};
    \draw[->] (A) -- (C) node[midway, left]{$f_\ast$};
  \end{tikzpicture} 
\end{center}
The homomorphism $$\widetilde{\mathcal{E}}(l_\p)\too \widetilde{\mathcal{E}}(f\circ l_\p),\ (\varphi,y)\mapstoo (f\circ \varphi,y)$$ induces a map from $f_{\ast}(\mathcal{E}(l_\p))$ to $\mathcal{E}(f\circ l_\p)$.
Hence, they yield isomorphic extensions.
\end{proof}

\begin{Remark}
 Note that we get rid of the factor 2 showing up in Lemma 3.11 of \cite{Sp}, i.e.~the extension class constructed above is "one half" of the extension class constructed in \textit{loc.cit}.
\end{Remark}

For every prodiscrete $R$-module $N$ and $\p\in S_{\St}$ the integration pairing 
$$\Hom(\St_\p,R)\otimes \St_\p(N)\too N,$$
which was defined in \eqref{integration}, induces a cup product pairing
\begin{align}\begin{split} \label{cupproduct}
 \mathcal{M}& (\mathfrak{f}(\pi_B),S_\St,S_\tw;R)^\epsilon \otimes \HH^{1}(G(F),\St_{\p}(N)) \\
	     & \xlongrightarrow{\cup} \HH^{d+1}(G(F),\mathcal{A}(\mathfrak{f}(\pi_B),S_\St - \{\p\},S_\tw;N)^{\{\p\}}(\epsilon)).
	     \end{split}
\end{align} 
As a direct consequence of Lemma \ref{SteinbergExt} (iii) we get
\begin{Corollary} \label{diagram}
 Let $R$ be a ring, $N$ an $R$-module and $l\colon T_\p\too N$ a locally constant character.
 For $\p \in S_\St$ the following diagram is commutative:
\begin{center}
 \begin{tikzpicture}
    \path 	(0,0) 	 node[name=A]{$\mathcal{M}(\mathfrak{f}(\pi_B),S_\St,S_\tw;R)^\epsilon$}
		(6,0) 	 node[name=B]{$\HH^{d+1}(G(F),\mathcal{A}(\mathfrak{f}(\pi_B),S_\St-\{\p\},S_\tw;N)^{\{\p\}}(\epsilon))$}
		(0,-1.5) node[name=C]{$\HH^d(T(F),\mathcal{D}(\m,S;R)^{\infty}(\epsilon))$}
		(6,-1.5) node[name=D]{$\HH^{d+1}(T(F),\mathcal{D}(\m,S - \{\p\};N)^{\{\p\},\infty}(\epsilon))$};
    \draw[->] (A) -- (B) node[midway, above]{$\cup b_{l_\p}$};
    \draw[->] (A) -- (C) node[midway, left]{\small $\Delta_{\m,S_\St,S_\tw}$};
    \draw[->] (B) -- (D) node[midway, right]{\small $\Delta_{\m,S_\St-\{\p\},S_\tw}^{\{\p\}}$};
    \draw[->] (C) -- (D) node[midway, above]{$\cup c_{l_\p}$};
  \end{tikzpicture} 
\end{center} 
\end{Corollary}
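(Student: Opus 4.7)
The plan is to reduce the commutativity to Lemma \ref{SteinbergExt}(iii) via two standard functorialities of cup products. First I would unwind both pullbacks $\Delta_{\m,S_\St,S_\tw}$ and $\Delta_{\m,S_\St-\{\p\},S_\tw}^{\{\p\}}$: each is, by construction, the composition of the restriction from $G(F)$-cohomology to $T(F)$-cohomology with a coefficient map obtained by dualizing the semi-local maps $\delta_{S_\St}$ and $\delta^{\tw}_{S_\tw}$. The only difference between the two $\Delta$-maps sits at the prime $\p$: the left-hand one still contracts the $\p$-slot via $\delta_\p\colon C_c^{0}(F_\p,\Z)\to\St_\p$, while the right-hand one removes $\p$ from $S_\St$ and leaves an extra $\Dist(F_\p,-)$-slot, indicated by the $\{\p\}$-superscript.

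Next I would invoke two naturality properties. The first is that cup products commute with restriction of groups, so cupping with $b_{l_\p}\in\HH^{1}(G_\p,\St_\p(N))$ over $G(F)$ and then restricting to $T(F)$ equals the restriction followed by cupping with $b_{l_\p}$ viewed over $T_\p$. The second is that cup products are functorial with respect to coefficient pairings: the $T_\p$-equivariant map $\delta_{\p,N}\colon C_c^{0}(F_\p,N)\to\St_\p(N)$ fits into a commuting square where pairing an $\St_\p$-coefficient class with any class obtained by pushing forward along $\delta_{\p,N}$ agrees with pairing its $\delta_\p^{\ast}$-pullback with the original class. In our setting, the $\p$-slot of $\mathcal{A}(\n,S_\St,S_\tw;R)$ pairs with $\St_\p(N)$ via integration of $\St_\p$ against $\St_\p(N)$ into $N$, and this pairing restricts along $\delta_\p\otimes\delta_{\p,N}$ to the pairing $\Dist(F_\p,R)\otimes C_c^{0}(F_\p,N)\to N$ underlying the bottom cup product.

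The final step is to apply Lemma \ref{SteinbergExt}(iii) (with values in $N$ rather than $\Z$, which is legitimate since $l_\p$ is locally constant, so its image is viewed with the discrete topology) to identify the restriction of $b_{l_\p}$ along $\delta_{\p,N}$ with the class $c_{l_\p}\in\HH^{1}(T_\p,C_c^{0}(F_\p,N))$. Chasing the resulting identifications around the square yields the asserted commutativity.

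The main obstacle is the careful bookkeeping of the many coefficient modules involved and checking that under the projection $\mathcal{A}(\n,S_\St,S_\tw;R)\otimes\St_\p(N)\to\mathcal{A}(\n,S_\St-\{\p\},S_\tw;N)^{\{\p\}}$ the $\p$-slot really is integrated via $\delta_{\p,N}$ in the manner used above; once this factorization at the $\p$-slot is verified, the proof becomes a purely formal diagram chase.
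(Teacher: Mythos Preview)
Your proposal is correct and follows exactly the approach the paper intends: the paper's own proof is the single sentence ``As a direct consequence of Lemma \ref{SteinbergExt} (iii) we get'', and you have simply unpacked the two routine functorialities (restriction from $G(F)$ to $T(F)$ and compatibility of the cup product with the coefficient map $\delta_\p$) that make Lemma \ref{SteinbergExt}(iii) applicable. There is nothing to add.
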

\subsection{Leading terms} \label{leadingterms}
In this section we will compute leading terms of anticyclotomic Stickelberger elements.
Let $\pi$ be an automorphic representation as in Section \ref{JacquetLanglands} and let $S$ be the set of finite places $\p$ of $F$ which are disjoint from $\ram(B)$, split in $E$ and such that the local component $\pi_\p$ is the Steinberg representation.
To ease the notation, we are going to write $\mathcal{M}(\mathfrak{f}(\pi_B),S;N)$ instead of $\mathcal{M}(\mathfrak{f}(\pi_B),S,\emptyset;N)$ etc.
For simplicity we assume that $R_\pi$ is a principal ideal domain.

For a given anticyclotomic extension $L/E$ with Galois group $\G$ we denote by $I_\G$ the augmentation ideal of $R_\pi[\G]$, i.e.~the kernel of the projection $R_\pi[\G] \onto R_\pi$.
For $\p\in S$ we denote the local reciprocity map by $\rec_\p$, i.e.~
$$\rec_\p\colon T_\p\hooklongrightarrow T(\A)\xlongrightarrow{\rec_{L/E}}\G.$$
We also consider the homomorphism 
\begin{align}\label{univ}
\univ_\p \colon T_\p \to F_\p^\ast \otimes R_\pi
\end{align}
given by composing the isomorphism $T_\p \cong F_\p^\ast$ with the inclusion of $F_\p^\ast$ into $F_\p^\ast \otimes R_\pi$.
Given an $R_\pi$-module $N$ and a subset $\mathfrak{S}\subseteq S$ we let
$$\mathcal{M}(\mathfrak{f}(\pi_B),\mathfrak{S};N)^{\epsilon,\pi}\subseteq \mathcal{M}(\mathfrak{f}(\pi_B),\mathfrak{S};N)^{\epsilon}$$
be the the submodule on which $\mathbb{T}_\p$ acts via $\lambda_\p$ for all $\p\notin \mathfrak{S}\cup\ram(B)$.
Exactly as in Lemma 6.2 of \cite{Sp}, one can prove that for every $\p\in \mathfrak{S}$ the map $$\cup b_{\ord_{p}}\colon \mathcal{M}(\mathfrak{f}(\pi_B),\mathfrak{S};R_\pi)^{\epsilon,\pi}\too\HH^{d+1}(G(F),\mathcal{A}(\mathfrak{f}(\pi_B),\mathfrak{S}-\{\p\};R_\pi)^{\{\p\}}(\epsilon))^\pi$$ has finite cokernel and that both modules are free of rank one modulo torsion.

By a theorem of Borel and Serre (cf.~\cite{BS2}) $S_\p$-arithmetic groups are of type (VFL).
It follows that $H^{\ast}(\Gamma,N)$ is finitely generated if $N$ is a finitely generated $R_\pi$-module and that the functor $N\to H^{\ast}(\Gamma,N)$ commutes with direct limits.
It follows that the canonical map
\begin{align*}
 \omega_{\mathfrak{S},\p} \colon \HH^{d+1}(G(F),\mathcal{A} & (\mathfrak{f}(\pi_B),\mathfrak{S}-\{\p\};R_\pi)^{\{\p\}}(\epsilon))^\pi \otimes_{R_\pi} (F_\p^\ast \otimes R_\pi) \\ 
					     & \too \HH^{d+1}(G(F),\mathcal{A}(\mathfrak{f}(\pi_B),\mathfrak{S}-\{\p\};F_\p^\ast \otimes R_\pi)^{\{\p\}}(\epsilon))^\pi 
\end{align*}
has finite kernel and cokernel.
Let $\kappa_{\mathfrak{S}}$ be a generator of the maximal torsion-free quotient of $\mathcal{M}(\mathfrak{f}(\pi_B),\mathfrak{S};R_\pi)^{\epsilon,\pi}$.
For $\p\in \mathfrak{S}$ we define $n_{\mathfrak{S},\p}$ to be the lowest common multiple of the exponents of
\begin{itemize}
\item the cokernel of $\omega_{\mathfrak{S},\p}$ and
\item the torsion submodule of $\HH^{d+1}(G(F),\mathcal{A}(\mathfrak{f}(\pi_B),\mathfrak{S}-\{\p\};R_\pi)^{\{\p\}}(\epsilon))^{\pi}$.
\end{itemize}
Let $\gamma_{\mathfrak{S},\p}$ be the order of the cokernel of the homomorphism
\begin{align*}
 n_{\mathfrak{S},\p} \mathcal{M}(\mathfrak{f}(\pi_B),\mathfrak{S};R_\pi)^{\epsilon,\pi}
	\xlongrightarrow{\cup b_{\ord_\p}}
	n_{\mathfrak{S},\p} \HH^{d+1}(G(F),\mathcal{A}(\mathfrak{f}(\pi_B),\mathfrak{S}-\{\p\};R_\pi)^{\{\p\}}(\epsilon))^{\pi}.
\end{align*}
\begin{Def}
An element $q_{\mathfrak{S},\p} \in F_\p^\ast \otimes R_\pi$ fulfilling
\begin{align} \label{EQ}
n_{\mathfrak{S},\p} ((\kappa_{\mathfrak{S}} \cup  b_{\ord_\p}) \otimes q_{\mathfrak{S},\p})=  n_{\mathfrak{S},\p}(\gamma_{\mathfrak{S},\p}\cdot\kappa_{\mathfrak{S}} \cup b_{\univ_\p})
\end{align}
is called automorphic period of $\pi$ at $\p$ (with respect to $\mathfrak{S}$).
If $\mathfrak{S}=\{\p\}$ we simply write $q_\p=q_{\{\p\},\p}$.
\end{Def}
From the discussion above it follows that automorphic periods exist and are at least unique up to torsion.
It is easy to see that the $\Q_\pi$-vector subspace generated by $q_{\mathfrak{S},\p}$ in $F_\p^\ast \otimes \Q_\pi$ is independent of $\mathfrak{S}$.

\begin{Theorem}[Leading term]\label{leading}
For every anticyclotomic extension $L/E$ and every $\mathfrak{f}(\pi_B)$-allowable modulus $\m$ that bounds the ramification of $L/E$, the following equality holds in $I_\G^{|S_\m|}/I_\G^{|S_\m|+1}$ (up to sign):
\begin{align*}
 &n_{S_{\m}} \left(\prod_{\p\in S_\m} n_{S_\m,\p}\ord_{\p}(q_{S_\m,\p})\right) \Theta_\m(L/F,\pi_B)^{\epsilon} \\
=&n_{S_{\m}} \left(\prod_{\p\in S_\m} n_{S_\m,\p}\left(\rec_{\p}(q_{S_\m,\p})-1\right)\right) \Theta_{\m^S}(E/F,\pi_B)^{\epsilon}
\end{align*}
Here $\m^S$ denotes the maximal divisor of $\m$, which is coprime to $S$ and $n_{S_\m}$ is the non-zero integer defined in the discussion before Theorem \ref{ordvanish}.
\end{Theorem}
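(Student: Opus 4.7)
The plan is to combine the derivative decomposition of Proposition \ref{vanish1} with the cup-product compatibility of Corollary \ref{diagram} and the defining relation \eqref{EQ} of the automorphic periods. As a first step I apply Proposition \ref{vanish1} to the reciprocity character $\chi = \rec_{L/E} \colon T(\A)/T(F) \to R_\pi[\G]^\ast$, taking $R = R_\pi[\G]$, $\mathfrak{a} = I_\G$, and $\mathfrak{a}_\p = I_\G$ for each $\p \in S_\m$ (the Archimedean data is forced by the sign character $\epsilon$). Working modulo $I_\G^{|S_\m|+1}$, this expresses $c_L$ as $\pm\mu_\ast\bigl((c_{d\rec_{\p_1}} \cup \ldots \cup c_{d\rec_{\p_s}}) \cap \overline{c_L}\bigr)$, where $\overline{c_L}$ is built from the reduction of $\rec_{L/E}$ and is naturally a homology class at the strictly smaller modulus $\m^S$.

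Next I lift $\kappa$ to an $(S_\m,\emptyset)$-special modular symbol $\kappa^\prime$ after scaling by $n_S$, exactly as in the proof of Theorem \ref{ordvanish}; the finite-cokernel statement for the evaluation map $\Ev_{S_\m, \emptyset}$ is what forces this factor. Capping $\Delta_\m(\kappa)$ with the decomposition of $c_L$ obtained above and invoking Corollary \ref{diagram} iteratively (once per prime in $S_\m$), each local cap product with $c_{d\rec_\p}$ on the torus side is replaced by a cup product on $G(F)$ with the extension class $b_{d\rec_\p} \in \HH^1(G_\p,\St_\p(I_\G/I_\G^2))$. The local derivative character factors as $d\rec_\p = \rho_\p \circ \univ_\p$, where $\rho_\p \colon F_\p^\ast \otimes R_\pi \to I_\G/I_\G^2$ sends $x$ to $\rec_\p(x) - 1 \bmod I_\G^2$; Lemma \ref{SteinbergExt}(ii) then yields $b_{d\rec_\p} = \rho_{\p,\ast}(b_{\univ_\p})$.

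To conclude, I apply \eqref{EQ} prime by prime: at each $\p \in S_\m$, the defining relation lets me replace $n_\p \gamma_\p\, \kappa^\prime \cup b_{\univ_\p}$ by $\omega_\p((n_\p \kappa^\prime \cup b_{\ord_\p}) \otimes q_\p)$, and pushing forward along $\rho_{\p,\ast}$ inserts a factor of $\rec_\p(q_\p) - 1$. Multiplying through by $\prod_{\p \in S_\m}\ord_\p(q_\p)$ absorbs the $\gamma_\p$ (the compatibility $\gamma_\p \sim \ord_\p(q_\p)$ modulo torsion follows from pushing \eqref{EQ} through the map induced by $\ord_\p$), and the remaining class $\kappa^\prime \cup \bigl(\bigcup_{\p \in S_\m} b_{\ord_\p}\bigr)$ pairs with the fundamental class at modulus $\m^S$ to produce the scalar $\Theta_{\m^S}(E/F, \pi_B)^\epsilon \in R_\pi$. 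Reassembling all contributions gives the claimed identity in $I_\G^{|S_\m|}/I_\G^{|S_\m|+1}$.

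The principal obstacle is the careful bookkeeping of integer factors and compatibility maps: one must verify that the pushforward $\rho_{\p,\ast}$ commutes with $\omega_\p$ and with the natural restriction maps between cohomology at levels $\mathfrak{f}(\pi_B)$ and $\mathfrak{f}(\pi_B)\p^{-1}$, and that iterating the $S$-special lift does not introduce torsion beyond what is captured by $n_S$. The cleanest route is probably to perform all manipulations in $\Q_\pi[\G]/I_\G^{|S_\m|+1}$, where every finite-cokernel map becomes an isomorphism, and to clear denominators at the end by multiplying through by $n_S \prod_{\p \in S_\m} n_\p$, using that both sides lie a priori in $R_\pi[\G]$.
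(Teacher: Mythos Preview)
Your proposal is correct and follows essentially the same route as the paper: apply Proposition~\ref{vanish1} with $\mathfrak a=\mathfrak a_\p=I_\G$, lift $\kappa$ to an $S$-special symbol at the cost of the factor $n_S$, use Corollary~\ref{diagram} to trade the torus classes $c_{\drec_\p}$ for the group classes $b_{\drec_\p}$, invoke Lemma~\ref{SteinbergExt}~(ii) and the defining relation~\eqref{EQ} to replace each $b_{\drec_\p}$ by $(\rec_\p(q_\p)-1)\cdot b_{\ord_\p}$, and then identify the remaining pairing with $\Theta_{\m^S}(E/F,\pi_B)^\epsilon$. Two small points to tighten: the paper obtains the exact equality $\gamma_\p=\ord_\p(q_\p)$ (not merely up to torsion) by pushing~\eqref{EQ} through $(\ord_\p)_\ast$, and the last identification---that $\kappa'\cup\bigl(\bigcup_{\p\in S_\m}b_{\ord_\p}\bigr)$ capped with $\overline{c_E}$ gives $\pm n_S\,\Theta_{\m^S}(E/F,\pi_B)^\epsilon$---is not automatic but is proved as a separate lemma using Remark~\ref{loccomp}, Lemma~\ref{fundclasses}, and Lemma~\ref{comp}~(ii) to pass from the $S$-extended fundamental class back down to modulus $\m^S$.
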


\begin{proof}
As in the proof of Lemma \ref{vanish2} there exists $\kappa' \in \mathcal{M}(\mathfrak{f}(\pi_B),S_\m;R_\pi)^\epsilon$ with
$$n_{S_\m} \Theta_\m(L/F,\pi_B)^{\epsilon} = \Delta_{\m,S_\m}(\kappa') \cap c_L(\m,S_\m,\epsilon).$$
If we apply Proposition \ref{vanish1} with $\mathfrak{a}_{\p} =\mathfrak{a} = I_\G$, we get $$ n_{S_\m} \Theta_\m(L/F,\pi_B)^{\epsilon} = \Delta_{\m,S_\m}(\kappa') \cap ((c_{\drec_{\p_1}} \cup \dots \cup c_{\drec_{\p_s}}) \cap \overline{c_L}(\m,S_\m,\epsilon)),$$ where $S_\m=\{\p_1,\ldots,\p_s\}$.
By Corollary \ref{diagram} we obtain
\begin{align*}
 &n_{S_\m} \Theta_\m(L/F,\pi_B)^{\epsilon}\\
= &\Delta^{\{\p_i\}}_{\m,S_\m - \{ \p_i\}}(\kappa' \cup b_{\drec_{\p_i}}) 
							       \cap ((c_{\drec_{\p_1}} \cup \dots \cup \widehat{c_{\drec_{\p_i}}} \cup \dots \cup c_{\drec_{\p_s}}) \cap \overline{c_L}(\m,S_\m,\epsilon))
\end{align*}
for every $i \in \{1,\dots,s\}$.

By Lemma \ref{SteinbergExt} \eqref{SE2} the following diagram is commutative for every $\p\in S_\m$:
\begin{center}
 \begin{tikzpicture} 
    \path 	(0,0)  node[name=A]{$\mathcal{M}(\mathfrak{f}(\pi_B),S_\m;R_\pi)^\epsilon$}
		(7,0)  node[name=B]{$\HH^{d+1}(G(F),\mathcal{A}(\mathfrak{f}(\pi_B),S_\m-\{\p\};F_\p^\ast \otimes R_\pi)^{\{\p\}}(\epsilon))$}
		(7,-2) node[name=C]{$\HH^{d+1}(G(F),\mathcal{A}(\mathfrak{f}(\pi_B),S_\m-\{\p\};I_\p/I_\p^2)^{\{\p\}}(\epsilon))$}
		(7,2)  node[name=D]{$\HH^{d+1}(G(F),\mathcal{A}(\mathfrak{f}(\pi_B),S_\m-\{\p\};R_\pi)^{\{\p\}}(\epsilon))$};
    \draw[->] (A) -- (B) node[midway, above]{$\cup b_{\univ_\p}$};
    \draw[->] (A) -- (C) node[midway, below left]{$\cup b_{\drec_\p}$};
    \draw[->] (A) -- (D) node[midway, above left]{$\cup b_{\ord_\p}$};
    \draw[->] (B) -- (C) node[midway, right]{$(\drec_\p)_\ast$};
    \draw[->] (B) -- (D) node[midway, right]{$(\ord_\p)_\ast$};
  \end{tikzpicture} 
\end{center}
Applying $\drec_\p$ to \eqref{EQ} and using the commutativity of the lower triangle of the diagram we get
\begin{align*}
 n_{S_\m,\p} (\rec_\p(q_{S_\m,\p})-1) \kappa' \cup b_{\ord_\p}  = n_{S_\m,\p} \gamma_{S_\m,\p} \kappa' \cup b_{\drec_\p}.
\end{align*}
By the commutativity of the upper triangle of the diagram we see that
$$\gamma_{S_\m,\p} = \ord_\p(q_{S_\m,\p}).$$
Hence, it is enough to show the following lemma.
\end{proof}
\begin{Lemma}
The equality
$$(\Delta_{\m,S_\m}(\kappa') \cup c_{\ord_{\p_1}} \cup \dots \cup c_{\ord_{\p_s}}) \cap \overline{c_E}(\m,S_\m,\epsilon)=\pm  n_{S_\m}\ \Theta_{\m^S}(E/F,\pi_B)^{\epsilon}$$
holds in $R_{\pi}$.
\end{Lemma}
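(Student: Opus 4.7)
The plan is to unwind both sides by the same sort of manipulations used in the proof of Theorem \ref{compatibility} and Lemma \ref{vanish2}, only now applied to the trivial anticyclotomic extension. The key formal inputs are the compatibility $c_{\ord_\p}=(\alpha_\p)_\ast c_\p$ from Remark \ref{loccomp}, the fundamental class identity $\vartheta^{S-\{\p\}}=\pm c_\p\cap\vartheta^S$ from Lemma \ref{fundclasses}, and the commutative diagram of Lemma \ref{comp}(ii) relating $\ev_\p$ and $\delta_\p^\ast$ via $\alpha_\p$.

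First I would analyze the right-hand side. Since $L=E$, the Galois group is trivial and the Artin reciprocity class $c_E$ reduces to the $\epsilon$-isotypical projection of the ordinary fundamental class $\vartheta^\infty$, so that $\Theta_{\m^S}(E/F,\pi_B)^\epsilon=\Delta_{\m^S}(\kappa^{\pi_B,\epsilon})\cap\vartheta^\infty_\epsilon$. For the left-hand side, $\overline{c_E}(\m,S_\m,\epsilon)$ is, up to the constants $\psi_v$, the image of $\vartheta^{S_\m,\infty}$ under the $\epsilon$-isotypical projection (with $\chi\equiv 1$ the $\widetilde{\chi}^S$ appearing in Section \ref{Vanishing} is just a constant). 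This is the same mechanism that identifies $c_L(\m,S_\m,\epsilon)$ with an honest $S_\m$-arithmetic fundamental class.

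Next I would transform the cap product. Writing $c_{\ord_{\p_i}}=(\alpha_{\p_i})_\ast c_{\p_i}$ and iterating Lemma \ref{fundclasses}, one obtains (up to sign)
\begin{equation*}
(c_{\ord_{\p_1}}\cup\dots\cup c_{\ord_{\p_s}})\cap\vartheta^{S_\m,\infty}_\epsilon \;=\; \pm\,(\alpha_{\p_1}\otimes\dots\otimes\alpha_{\p_s})_\ast\,\vartheta^{\infty}_\epsilon,
\end{equation*}
where the tensor of $\alpha_{\p_i}$'s acts on the coefficient module by converting the $C_c(T_{\p_i}/U_{T_{\p_i}},\Z)$-factors into $C_c(F_{\p_i},\Z)$-factors. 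Now I would use the projection formula to move these $\alpha_{\p_i}$'s onto $\Delta_{\m,S}(\kappa')$. By Lemma \ref{comp}(ii), the composition $\alpha_\p^\ast\circ\delta_\p^\ast$ equals $\partial_0\circ\ev_\p$, so precomposing the distribution $\Delta_{\m,S}(\kappa')$ with $\alpha_{\p_i}$ at each $\p_i\in S_\m$ turns the pullback built from the Steinberg connecting map $\delta_{\p_i}$ into the pullback built from evaluation at the edge $e_{\p_i}$. In other words, at each $\p_i\in S_\m$ the cup/cap replaces $\delta_{\p_i}$ by $\ev_{\p_i}$ and removes $\p_i$ from the decorated set.

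At the remaining places $\p\in S\setminus S_\m$ there is no modification coming from the $c_{\ord_\p}$'s, but these primes also do not divide $\m^S=\m$; the choice of $g_\m$ at such a $\p$ is an edge and capping against the trivial part of $\overline{c_E}$ already pairs the Steinberg component against the characteristic function of that edge, which is exactly $\ev_\p$. Thus all of $\ev_\p$ for $\p\in S$ gets applied, and by definition of $n_S$ and $\kappa'$ one has $\Ev_S(\kappa')=n_S\kappa^{\pi_B,\epsilon}$. Putting everything together yields
\begin{equation*}
(\Delta_{\m,S}(\kappa')\cup c_{\ord_{\p_1}}\cup\dots\cup c_{\ord_{\p_s}})\cap\overline{c_E}(\m,S_\m,\epsilon)
\;=\;\pm\,\Delta_{\m^S}(\Ev_S(\kappa'))\cap\vartheta^\infty_\epsilon
\;=\;\pm\,n_S\,\Theta_{\m^S}(E/F,\pi_B)^\epsilon,
\end{equation*}
which is the claim. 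The main obstacle is bookkeeping: one must track the signs produced by the iterated application of Lemma \ref{fundclasses} together with the various $\epsilon$-isotypical projections, and ensure that the coefficient modules match up at each stage. These are of the same nature as the sign computations in \cite{BG} and pose no essential difficulty beyond careful accounting.
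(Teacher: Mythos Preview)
Your argument is essentially the paper's own: rewrite each $c_{\ord_{\p_i}}$ as $(\alpha_{\p_i})_\ast c_{\p_i}$ via Remark \ref{loccomp}, collapse $(c_{\p_1}\cup\dots\cup c_{\p_s})\cap \overline{c_E}(\m,S_\m,\epsilon)$ to $\pm\, c_E(\m^S,\emptyset,\epsilon)$ by iterating Lemma \ref{fundclasses}, then use the projection formula together with Lemma \ref{comp}(ii) to transport the $\alpha_{\p_i}$'s onto $\Delta_{\m,S}(\kappa')$ and recognise $\Delta_{\m^S}(n_S\,\kappa^{\pi_B,\epsilon})$. That is exactly the route taken in the paper.

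One small correction: your paragraph on the primes $\p\in S\setminus S_\m$ is off. The element $g_\m$ lives in $G(\A^{S,\infty})$ and therefore has \emph{no} component at any $\p\in S$; there is no ``edge chosen at such a $\p$'' against which to pair. The passage from $\Delta_{\m,S}(\kappa')$ to $\Delta_{\m^S}(n_S\kappa)$ at those places is not effected by an evaluation against an edge of $g_\m$ but rather by the same compatibility (Lemma \ref{comp}) that was already used in the proof of Lemma \ref{vanish2} to identify $c_L(\m,\emptyset,\epsilon)\cap\Delta_\m(\kappa)$ with $c_L(\m,S_\m,\epsilon)\cap\Delta_{\m,S}(\kappa')$; the paper's proof simply absorbs this step into the equality $(\alpha_{\p_1}\otimes\dots\otimes\alpha_{\p_s})^\ast(\Delta_{\m,S}(\kappa'))=\Delta_{\m^S}(n_S\kappa)$ without further comment. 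Apart from this bookkeeping slip, your sketch matches the paper's proof.
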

\begin{proof}
By Remark \ref{loccomp} we have $c_{\ord_{\p_i}}=(\alpha_{\p_i})_{\ast}(c_{\p_i})$ for all $1\leq i\leq s$. Thus, we get
\begin{align*}
&(c_{\ord_{\p_1}} \cup \dots \cup c_{\ord_{\p_s}}) \cap \overline{c_E}(\m,S_\m,\epsilon)\\
=& (\alpha_{\p_1}\otimes\ldots\otimes\alpha_{\p_s})_{\ast}((c_{\p_1} \cup \dots \cup c_{\p_s}) \cap \overline{c_E}(\m,S_\m,\epsilon))\\
=&\pm  (\alpha_{\p_1}\otimes\ldots\otimes\alpha_{\p_s})_{\ast} (c_E(\m^S,\emptyset,\epsilon)).
\end{align*}
The second equality holds by Lemma \ref{fundclasses}. By Lemma \ref{comp} \eqref{comp2} we have 
\begin{align*}
&\Delta_{\m,S}(\kappa')\cap (\alpha_{\p_1}\otimes\ldots\otimes\alpha_{\p_s})_{\ast}\cap c_E(\m^S,\emptyset,\epsilon)\\
=& (\alpha_{\p_1}\otimes\ldots\otimes\alpha_{\p_s})^{\ast}(\Delta_{\m,S}(\kappa')) \cap c_E(\m^S,\emptyset,\epsilon)\\
=& \Delta_{\m^S}(n_{S_\m} \kappa)\cap c_E(\m^S,\emptyset,\epsilon)\\
=& n_{S_\m} \Theta_{\m^S}(E/F,\pi_B)^{\epsilon}
\end{align*}
and thus, the claim follows.
\end{proof}

\begin{Remark}
\begin{enumerate}[(i)]\thmenumhspace
\item Suppose we are in the CM case, i.e.~$d=0$, and that $S_\m=\{\p\}$. Then the module $$\HH^{1}(G(F),\mathcal{A}(\mathfrak{f}(\pi_B),\emptyset;R_\pi)^{\{\p\}}(\epsilon))^{\pi}$$ is torsion-free. Thus, $n_{S_\m,\p}$ is just the exponent of the cokernel of $\omega_{S_\m,\p}$.
\item Using the norm relations and the interpolation formulae one can determine $\Theta_{\m^S}(E/F,\pi_B)^{\epsilon}$ explicitly in terms of the special value at $1/2$ of the untwisted $L$-function $L(s,\pi_E)$.
\item From the proof of Theorem \ref{leading} we see that $\ord_{\p}(q_\p)$ is non-zero for every automorphic period $q_\p$.
\end{enumerate}
\end{Remark}
\subsection{Jacobians of Mumford curves}\label{Mumford}
Let $\p$ be a finite place of $F$. In this section we give a representation theoretic formulation of the $p$-adic uniformization of Jacobians of certain Mumford curves. 
Let $\PPP$ be the algebraic variety $G_\p/\Stab_{G_\p} (o_{\P^\tau})$ and $\C_p$ the completion of an algebraic closure of $F_\p$. Let $\sigma_\p\colon F_\p^{\ast}\to \C_p^{\ast}$ denote the natural embedding.
Let $\mathcal{H}_p$ be the rigid analytic space
$$\mathcal{H}_\p(\C_p)=\PPP(\C_p)-\PPP(F_\p).$$
The group $G_\p$ acts on $\mathcal{H}_\p(\C_p)$ and therefore, by linear extension, on the space $\Div(\mathcal{H}_\p(\C_p))$ of (naive) divisors on $\mathcal{H}_\p(\C_p)$, i.e.~the space of formal $\Z$-linear combinations of points of $\mathcal{H}_\p(\C_p)$.
The subspace $\Div^{0}(\mathcal{H}_\p(\C_p))$ of divisors of degree $0$ is $G_\p$-invariant.
Sending a degree $0$ divisor $D$ to the unique (up to multiplication by a constant) rational function with divisor $D$, yields a $G_\p$-equivariant homomorphism
$$\Psi\colon\Div^{0}(\mathcal{H}_\p(\C_p))\mapstoo C(\PPP(F_\p),\C_p^{\ast})/\C_\p^{\ast}=\St_\p(\C_p^{\ast}).$$

\begin{Lemma}\label{comparison} Let $b_{\adic_{\p}}\in \HH^1(G_\p,\Div^{0}(\mathcal{H}_\p(\C_p)))$ be the class of the extension
\begin{align}\label{upperhalfplane}
 0\too \Div^0(\mathcal{H}_\p(\C_p))\too \Div(\mathcal{H}_\p(\C_p))\xlongrightarrow{\deg}\Z\too 0
\end{align} and $b_{\univ_{\p}}\in\HH^1(G_\p,\St_\p(F_\p^{\ast}))$ the universal class induced by \eqref{univ} (with $R_\pi=\Z$).
The equality $\Psi_{\ast}(b_{\adic_{\p}})=(\sigma_\p)_\ast (b_{\univ_{\p}})$ holds.
\end{Lemma}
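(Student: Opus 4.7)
The plan is to prove the lemma by exhibiting an explicit $G_\p$-equivariant morphism of short exact sequences, with left vertical arrow $\Psi$, right vertical arrow the identity on $\Z$, and middle arrow a map $\Phi \colon \Div(\mathcal{H}_\p(\C_p)) \to \mathcal{E}(\sigma_\p)$ still to be constructed. By functoriality of the connecting homomorphism in group cohomology, such a morphism identifies $\Psi_{\ast}(b_{\adic_\p})$ with the extension class of $\mathcal{E}(\sigma_\p)$, which is precisely $b_{\univ_\p}$ under the conventions of the lemma.

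To construct $\Phi$, I would first fix an $F_\p$-basis of $B_\p$ inducing an isomorphism $B_\p^{\ast} \cong \GL_2(F_\p)$ under which $\Stab_{B_\p^{\ast}}(o_{\P^{\tau}}) = E_\p^{\ast} U_\p$ becomes the upper triangular Borel, the splitting $E_\p^{\ast} \cong F_\p^{\ast} \times F_\p^{\ast}$ corresponds to the diagonal embedding, and the base end $[o_{\P^{\tau}}] \in \PPP(F_\p)$ is represented by $e_1 = (1,0)^T$. For a divisor $D = \sum_i n_i [z_i]$ on $\mathcal{H}_\p(\C_p)$ with $z_i = [a_i : b_i]$, I set
\[
F_D(a, b) = \prod_i (a b_i - b a_i)^{n_i},
\]
a rational expression homogeneous of degree $n = \deg D$ in $(a, b)$. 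Because the support of $D$ is disjoint from $\PPP(F_\p)$, each factor $a b_i - b a_i$ is non-zero on $F_\p^2 \setminus \{0\}$, so $F_D$ restricts to a continuous $\C_p^{\ast}$-valued function there. I then define $\varphi_D(g) = F_D(g \cdot e_1)$ for $g \in B_\p^{\ast}$, put $\Phi(D) = [(\varphi_D, n)] \in \mathcal{E}(\sigma_\p)$, and extend $\Z$-linearly.

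The verifications are short unwindings. Any element of the upper triangular Borel sends $e_1$ to its $(1,1)$-entry times $e_1$, so homogeneity of $F_D$ gives the transformation rule $\varphi_D(g \cdot u \cdot (t_1, t_2)) = \sigma_\p(t_1)^n \varphi_D(g)$, proving $(\varphi_D, n) \in \widetilde{\mathcal{E}}(\sigma_\p)$. For $G_\p$-equivariance I would use the identity $\gamma(a, b) \times (c, d) = \det(\gamma) \cdot (a, b) \times \gamma^{-1}(c, d)$ in $\GL_2$ to obtain $\varphi_{g_0 D} = \det(g_0)^n \cdot (g_0 \cdot \varphi_D)$ for $g_0 \in B_\p^{\ast}$; the scalar factor vanishes in the quotient $\mathcal{E}(\sigma_\p) = \widetilde{\mathcal{E}}(\sigma_\p)/\widetilde{\mathcal{E}}(\sigma_\p)_0$, and in particular the central $F_\p^{\ast} \subset B_\p^{\ast}$ acts trivially, so $\Phi$ descends to $G_\p$. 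Commutativity of the right square is tautological; for the left square one notes that when $\deg D = 0$ the expression $F_D$ is homogeneous of degree zero and therefore descends to a rational function on $\PPP$ with divisor $D$, which is $\Psi(D)$ by uniqueness, whence $\varphi_D = \pi^{\ast}\Psi(D)$, the image of $\Psi(D)$ under the inclusion $\St_\p(\C_p^{\ast}) \hookrightarrow \mathcal{E}(\sigma_\p)$.

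The main obstacle is mild and essentially bureaucratic: it is the book-keeping required to align the identifications $B_\p^{\ast} \cong \GL_2(F_\p)$, $E_\p^{\ast} \cong F_\p^{\ast} \times F_\p^{\ast}$, $\PPP \cong G_\p / \Stab_{G_\p}(o_{\P^\tau})$, and $\Ends \cong \PPP(F_\p)$ so that the $(1,1)$-entry appearing in the defining transformation rule of $\widetilde{\mathcal{E}}(\sigma_\p)$ matches the one arising from the Borel action on $e_1$. Once these conventions are pinned down coherently the argument reduces to the formal checks above.
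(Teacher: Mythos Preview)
Your proposal is correct and follows essentially the same strategy as the paper: both construct an explicit $G_\p$-equivariant map $\widetilde{\Psi}\colon \Div(\mathcal{H}_\p(\C_p))\to \mathcal{E}(\sigma_\p)$ fitting into a morphism of short exact sequences with left arrow $\Psi$ and right arrow the identity. The paper works in the affine coordinate, sending $[z]$ to the function $g=\left(\begin{smallmatrix}a&b\\c&d\end{smallmatrix}\right)\mapsto cz-a$, which is your $F_{[z]}(g\cdot e_1)$ up to a harmless sign once one writes $z=[z:1]$; the remaining verifications are identical to yours.
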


\begin{proof}
Let us a choose an isomorphism $\Xi\colon B_\p^{\ast}\xrightarrow{\cong} \GL_2(F_\p)$ such that $$\Xi(t_1,t_2)=\begin{pmatrix} t_1 & 0 \\ 0 & t_2 \end{pmatrix}$$
holds for all $(t_1,t_2)\in E_{\p}^{\ast}$ and $\Xi(\Stab_{B_\p^{\ast}} (o_{\P^\tau}))$ is equal to the group of invertible upper triangular matrices.
Thus, there is also an induced isomorphism between $\PPP$ and $\PP_{F_\p}$.
For $z\in\mathcal{H}_\p(\C_p)$ we define the function $$\Phi_z\colon \GL_2(F_\p)\too \C_p^{\ast},\quad \begin{pmatrix} a & b \\ c & d \end{pmatrix} \mapstoo cz-a.$$
A short calculation shows that the homomorphism $$\widetilde{\Psi}\colon \Div(\mathcal{H}_\p(\C_p))\too \mathcal{E}(\sigma_\p),\quad \sum_z n_z[z]\mapstoo (\prod_z (\Phi_z)^{n_z},\sum_z n_z)$$ is $G_\p$-equivariant.
One immediately checks that the diagram
\begin{center}
 \begin{tikzpicture}
    \path 	(0.5,0) node[name=A]{$0$}
		(3,0) node[name=B]{$\Div^0(\mathcal{H}_\p(\C_p))$}
		(6,0) node[name=C]{$\Div(\mathcal{H}_\p(\C_p))$}
		(8.5,0) node[name=D]{$\Z$}
		(10.5,0) node[name=E]{$0$}
		(0.5,-2) node[name=F]{$0$}
		(3,-2) node[name=G]{$\St_\p(\C_p^{\ast})$}
		(6,-2) node[name=H]{$\mathcal{E}(\sigma_\p)$}
		(8.5,-2) node[name=I]{$\Z$}
		(10.5,-2) node[name=J]{$0$};
    \draw[->] (A) -- (B) ;
		\draw[->] (B) -- (C) ;
		\draw[->] (C) -- (D) ;
		\draw[->] (D) -- (E) ;
		\draw[->] (F) -- (G) ;
		\draw[->] (G) -- (H) ;
		\draw[->] (H) -- (I) ;
		\draw[->] (I) -- (J) ;
    \draw[->] (B) -- (G) node[midway, right]{$\Psi$};
    \draw[->] (C) -- (H) node[midway, right]{$\widetilde{\Psi}$};
    \draw[->] (D) -- (I) node[midway, right]{$=$};
  \end{tikzpicture} 
\end{center}
is commutative and therefore, the claim follows.
\end{proof}

A discrete subgroup $\Gamma \subseteq G_\p$ is called a ($p$-adic) Schottky group if it is finitely generated and torsion-free.
Since the stabilizers of vertices and edges of the Bruhat-Tits tree $\T_\p$ are compact subgroups of $G_\p$ it follows that Schottky groups act freely on $\T_\p$. 
A group acting freely on a tree is free. (See for example Section 3.3 of \cite{Trees})
Hence, we have that for a Schottky group $\Gamma$ and an abelian group $A$ the canonical map
\begin{align}\label{basechange}
\HH^{1}(\Gamma,\Z)\otimes A\too\HH^{1}(\Gamma,A)
\end{align}
is an isomorphism.
We consider the algebraic torus $\mathbb{G}_\Gamma=\HH^{1}(\Gamma,\Z)\otimes\mathbb{G}_{m}$.
It follows from \eqref{basechange} that we have a canonical isomorphism $\mathbb{G}_{\Gamma}(\C_p)\cong\HH^{1}(\Gamma, \C_p^{\ast}).$
The cup product pairing \eqref{cupproduct} induces a homomorphism $$\int_{\univ}\colon\HH^{0}(\Gamma,\St_\p(\Z)^{\vee})\xrightarrow{\cup b_{\univ_\p}}\HH^{1}(\Gamma,\C_p^{\ast})=\mathbb{G}_\Gamma(\C_p).$$ We define $L_\Gamma$ to be its image.

Let $X$ be a smooth proper curve over $F_\p$ with Jacobian $\Jac_X$.
We assume that $X$ admits a uniformization by the $p$-adic upper half plane, i.e.~there exists a Schottky group $\Gamma_X$ and a $\Gal(\C_p/F_\p)$-equivariant rigid analytic isomorphism $$X(\C_p)\cong \Gamma_X\backslash\mathcal{H}_\p(\C_p).$$

\begin{Theorem}[$p$-adic uniformization]\label{unif}
There is a $\Gal(\C_{p}/F_\p)$-equivariant rigid analytic isomorphism
$$\Jac_X(\C_p)\cong \mathbb{G}_{\Gamma_X}(\C_p)/L_{\Gamma_X}.$$
\end{Theorem}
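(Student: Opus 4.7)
The plan is to deduce Theorem \ref{unif} from the classical Manin-Drinfeld-Mumford uniformization of Jacobians of Mumford curves, and then translate the description of the period lattice from its divisor-theoretic form into the representation-theoretic one using Lemma \ref{comparison}.

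First I would recall the classical uniformization. Since $\Gamma_X$ is a Schottky group acting freely and properly discontinuously on $\mathcal{H}_\p(\C_p)$, the Abel-Jacobi map together with the multiplicative integration theory of Manin and Drinfeld (in Dasgupta's formulation, implicit in \cite{BD98}) yields a $\Gal(\C_p/F_\p)$-equivariant rigid analytic isomorphism
\[
\Jac_X(\C_p) \cong \mathbb{G}_{\Gamma_X}(\C_p)/L'_{\Gamma_X},
\]
where $L'_{\Gamma_X}$ denotes the period lattice. The key point is that this lattice admits a cohomological description: it is the image of the map $\HH^0(\Gamma_X,\St_\p(\Z)^{\vee})\to \HH^1(\Gamma_X,\C_p^{\ast})$ obtained by first cup-producting with the restriction of $b_{\adic_\p}$ to $\Gamma_X$ and then pushing forward via $\Psi_{\ast}$. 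Indeed, the short exact sequence \eqref{upperhalfplane} encodes exactly the fact that degree zero divisors on $X$ are boundaries of divisors on $\mathcal{H}_\p(\C_p)$ modulo $\Gamma_X$-cocycles, and passing to $\Psi$ turns the resulting class into Dasgupta's multiplicative integral against harmonic cocycles.

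Once this is set up, the theorem follows formally from Lemma \ref{comparison}: the identity $\Psi_{\ast}(b_{\adic_\p})=b_{\univ_\p}$ combined with the functoriality of cup products shows that the composition $\Psi_{\ast}\circ(\,\cdot\cup b_{\adic_\p})$ coincides with $\cdot\cup b_{\univ_\p}=\int_{\univ}$, whence $L'_{\Gamma_X}=L_{\Gamma_X}$. Galois equivariance is automatic since $\Psi$, the extension \eqref{upperhalfplane} and $b_{\adic_\p}$ are all defined over $F_\p$, and the rigid analytic structure on the right hand side is manifest from the definition of $\mathbb{G}_{\Gamma_X}$ as a split torus.

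The main obstacle is the first step: matching the classical Manin-Drinfeld period lattice, traditionally expressed via multiplicative integrals of theta-like functions along homology cycles of $X$, with the purely cohomological description used above. One needs to identify $\HH^0(\Gamma_X,\St_\p(\Z)^{\vee})$ with the group of $\Gamma_X$-invariant $\Z$-valued harmonic cocycles on $\T_\p$ (equivalently with $\HH_1(X,\Z)$), and to verify that Dasgupta's pairing from \cite{BD98} coincides exactly, not merely up to commensurability, with $\Psi_{\ast}\circ(\,\cdot\cup b_{\adic_\p})$. This is essentially bookkeeping between two languages, but requires unpacking several layers of definitions; once carried out, Lemma \ref{comparison} does the rest of the work.
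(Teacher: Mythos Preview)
Your proposal is correct and follows essentially the same route as the paper: start from Dasgupta's Manin--Drinfeld uniformization with its divisor-theoretic period lattice, then invoke Lemma \ref{comparison} to pass from $b_{\adic_\p}$ to $b_{\univ_\p}$ and identify that lattice with $L_{\Gamma_X}$. The one ingredient the paper makes explicit which you leave as ``bookkeeping'' is the duality isomorphism $D\colon \HH^{0}(\Gamma_X,\St_\p(\Z)^{\vee})\xrightarrow{\cong}\HH_{1}(\Gamma_X,\Z)$ obtained by capping with a generator of $\HH_1(\Gamma_X,\St_\p(\Z))$, using that $\St_\p(\Z)=\HH_c^{1}(\T_\p,\Z)$ is a dualizing module for the free group $\Gamma_X$; the paper then reduces the comparison to the identity $\int_{\univ}=\pm\,\Xint{\times}\circ\delta_{\univ}\circ D$, which is exactly your ``Dasgupta's pairing coincides with $\Psi_\ast\circ(\,\cdot\cup b_{\adic_\p})$'' rephrased on the homological side.
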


\begin{proof}
Let $$\delta\colon\HH_1(\Gamma_X,\Z)\too\HH_0(\Gamma_X,\Div^0(\mathcal{H}_\p(\C_p)))$$ be the boundary map coming from the short exact sequence \eqref{upperhalfplane}.
In Section 2 of \cite{Das05} Dasgupta defines a so-called multiplicative integral
$$\Xint{\times}\colon\HH_0(\Gamma_X,\Div^0(\mathcal{H}_\p(\C_p)))\too \mathbb{G}_{\Gamma_X}(\C_p).$$
We write $\widetilde{L}_{\Gamma_X}$ for be the image of the map $\Xint{\times} \circ\delta$.
By Dasgupta's variant of the Manin-Drinfeld Theorem  (see \cite{Das05}, Theorem 2.5) there is a rigid analytic isomorphism
$$\Jac_X(\C_p)\cong \mathbb{G}_{\Gamma_X}(\C_p)/\widetilde{L}_{\Gamma_X}.$$
It follows from Lemma \ref{comparison} that the diagram 
\begin{center}
 \begin{tikzpicture}
    \path 	(0,0) node[name=A]{$\HH_1(\Gamma_X,\Z)$}
		(4,0) node[name=B]{$\HH_0(\Gamma_X,\Div^0(\mathcal{H}_\p(\C_p)))$}
		(8,0) node[name=C]{$\mathbb{G}_{\Gamma_X}(\C_p)$}
		(0,-2) node[name=F]{$\HH_1(\Gamma_X,\Z)$}
		(4,-2) node[name=G]{$\HH_0(\Gamma_X,\St_\p(\C_p^{\ast}))$}
		(8,-2) node[name=H]{$\mathbb{G}_{\Gamma_X}(\C_p)$};
    \draw[->] (A) -- (B) node[midway, above]{$\delta$};
		\draw[->] (B) -- (C) node[midway, above]{$\Xint{\times}$};
		\draw[->] (F) -- (G) node[midway, above]{$\delta_{\univ}$};
		\draw[->] (G) -- (H) node[midway, above]{$\Xint{\times}$};
    \draw[->] (A) -- (F) node[midway, right]{$=$};
    \draw[->] (B) -- (G) node[midway, right]{$\Psi_\ast$};
    \draw[->] (C) -- (H) node[midway, right]{$=$};
  \end{tikzpicture}
\end{center}
is commutative. Here $\delta_{\univ}$ denotes the boundary map coming from the short exact sequence corresponding to $b_{\univ_\p}$.

Since $\Gamma_X$ acts freely on the Bruhat-Tits tree $\T_\p$ of $G_\p$ it follows that $\St_\p(\Z)=\HH_{c}^{1}(\T_\p,\Z)$ is a dualizing module for $\Gamma_X$.
In particular, $\HH_{1}(\Gamma_X,\St_\p(\Z))$ is a free $\Z$-module of rank one and taking cap product with a generator induces an isomorphism
$$\textnormal{D}\colon\HH^{0}(\Gamma_X,\St_\p(\Z)^{\vee})\too \HH_{1}(\Gamma_X,\Z).$$
Going through Dasgupta's construction one sees that
$$\int_{\univ}=\pm\Xint{\times}\circ\ \delta_{\univ}\circ \textnormal{D}$$
holds.
\end{proof}
\subsection{Comparison of $\LI$-invariants}\label{uniformization}
We apply the results of the previous section to Shimura curves in order to compare automorphic and algebraic periods.
From now on we assume that $d=0$, i.e.~that $E$ is totally imaginary.
In particular, the quaternion algebra $B$ is totally definite.
We fix a prime $\p\notin\ram(B)$ of $F$ such that $\pi_\p$ is the Steinberg representation and an arbitrary Archimedean place $v$ of $F$.
Let $\widetilde{B}$ be the quaternion algebra over $F$ that has the same invariants at all places away from $\p$ and $v$ but is split at $v$ and, therefore, non-split at $\p$.
There exists a Jacquet-Langlands lift $\pi_{\widetilde{B}}$ of $\pi$ to $\widetilde{B}$ of conductor $\mathfrak{f}(\pi_{\widetilde{B}})=\p^{-1}\mathfrak{f}(\pi_{B})$.
Let $X_{\widetilde{B},\mathfrak{f}(\pi_{\widetilde{B}})}$ be the associated (not necessarily connected) Shimura curve of level $\Gamma_0(\mathfrak{f}(\pi_{\widetilde{B}}))$.
We define $A_\pi$ to be the maximal quotient of the Jacobian $\Jac(X_{\widetilde{B},\mathfrak{f}(\pi_{\widetilde{B}})})$, on which the Hecke algebra acts via the character induced by $\pi_{\widetilde{B}}$.
For simplicity, we assume that the field of definition of $\pi$ is equal to $\Q$.
Hence, $A_\pi$ is an elliptic curve with split multiplicative reduction at $\p$.
Therefore, there exists a period $q_\p^{\Tate}\in F_\p^{\ast}$ and a rigid analytic isomorphism
$$\C_\p^{\ast}/q_\p^{\Tate}\xlongrightarrow{\cong}A_\pi(\C_\p).$$
By the Cerednik-Drinfeld Theorem (cf.~\cite{BZ}) every connected component of the Shimura curve has a uniformization by the $p$-adic upper half plane.
More precisely, the uniformization of the whole Shimura curve combined with Proposition \ref{unif} yields a Hecke-equivariant isogeny between $\Jac(X_{\widetilde{B},\mathfrak{f}(\pi_{\widetilde{B}})})(\C_\p)$ and the cokernel of the map
$$\mathcal{M}(\mathfrak{f}(\pi_B),\{\p\};\Z)\xlongrightarrow{\cup b_{\univ}}\HH^{1}(G(F),\mathcal{A}(\mathfrak{f}(\pi_B);\C_\p^{\ast})^{\{\p\}}).$$
Taking $\pi_B$-isotypical components we arrive at
\begin{Theorem}[Comparison of periods]\label{periods}
 Let $E$ be totally imaginary and let $\p\notin\ram(B)$ be a prime of $F$ such that $\pi_\p$ is the Steinberg representation.
 Then the lattices generated by the automorphic period $q_\p$ and the algebraic period $q_\p^{\Tate}$ are commensurable.
\end{Theorem}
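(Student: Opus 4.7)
The plan is to extract the period comparison from the uniformization announcement in the paragraph preceding the theorem. By the Cerednik-Drinfeld theorem each connected component of $X_{\widetilde{B},\mathfrak{f}(\pi_{\widetilde{B}})}(\C_\p)$ is a quotient $\Gamma_i\backslash\mathcal{H}_\p(\C_\p)$ for a $\{\p\}$-arithmetic Schottky subgroup $\Gamma_i\subseteq \widetilde{B}_\p^{\ast}/F_\p^{\ast}\cong G_\p$. The first step would be to apply Theorem \ref{unif} componentwise and then to use Jacquet-Langlands to identify the cohomology of the $\Gamma_i$ with the $\{\p\}$-special modular symbol space $\mathcal{M}(\mathfrak{f}(\pi_B),\{\p\};\Z)$ on $G=B^{\ast}/F^{\ast}$. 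Lemma \ref{comparison} guarantees that the extension class associated to \eqref{upperhalfplane} goes over to $b_{\univ_\p}$ under this identification, producing the Hecke-equivariant isogeny between $\Jac(X_{\widetilde{B},\mathfrak{f}(\pi_{\widetilde{B}})})(\C_\p)$ and the cokernel of $\cup b_{\univ_\p}$ recorded in the text.

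I would then project onto the $\pi_B$-isotypical part of this isogeny. Since $R_\pi=\Z$ and $d=0$, both the source and the target of $\cup b_{\univ_\p}$ are rank-one modulo torsion on this component. On the algebraic side the Tate uniformization yields $A_\pi(\C_\p)\cong\C_\p^{\ast}/(q_\p^{\Tate})^{\Z}$. On the automorphic side the cokernel takes the form $\C_\p^{\ast}/\Lambda_\p$, where $\Lambda_\p\subseteq\C_\p^{\ast}$ is generated by the image of $\kappa^{\pi_B,\epsilon}\cup b_{\univ_\p}$ after pushforward along $\sigma_\p\colon F_\p^{\ast}\into\C_\p^{\ast}$, using Lemma \ref{SteinbergExt}~(ii).

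The third step is to identify $\Lambda_\p$ with $\sigma_\p(q_\p)^{\Z}$ up to commensurability. For a generator $e$ of the torsion-free part of $\HH^{1}(G(F),\mathcal{A}(\mathfrak{f}(\pi_B);\Z)^{\{\p\}})^{\pi_B}$, the definition of $\gamma_\p$ gives $\kappa^{\pi_B,\epsilon}\cup b_{\ord_\p}=\gamma_\p\cdot e$ modulo torsion, so the defining equation \eqref{EQ} of $q_\p$ forces $\kappa^{\pi_B,\epsilon}\cup b_{\univ_\p}$ to correspond to $e\otimes q_\p$ under $\omega_\p$, up to the finite kernel and cokernel of $\omega_\p$ and the integer $n_\p$. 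Each of these discrepancies affects $\Lambda_\p$ only by a finite index. Combining with the isogeny of the first step, $(q_\p^{\Tate})^{\Z}$ and $\sigma_\p(q_\p)^{\Z}$ are commensurable inside $\C_\p^{\ast}$; as both periods belong to $F_\p^{\ast}$, this commensurability descends to $F_\p^{\ast}$.

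The main obstacle is the reassembly in the first step: bridging the Schottky groups coming from Cerednik-Drinfeld (which live in $\widetilde{B}^{\ast}/F^{\ast}$) with the arithmetic subgroups of $B^{\ast}/F^{\ast}$ whose cohomology realizes $\mathcal{M}(\mathfrak{f}(\pi_B),\{\p\};\Z)$. This requires Jacquet-Langlands at the shifted level $\mathfrak{f}(\pi_{\widetilde{B}})=\p^{-1}\mathfrak{f}(\pi_B)$, strong approximation to account for the connected components, and a matching of Atkin-Lehner data across the transfer. Once this cohomological reformulation of Cerednik-Drinfeld is in place, the remaining argument is a formal isotypical projection followed by a direct application of \eqref{EQ}.
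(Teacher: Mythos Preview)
Your overall architecture matches the paper's argument exactly: Cerednik--Drinfeld plus Theorem~\ref{unif} give a Hecke-equivariant isogeny between $\Jac(X_{\widetilde{B},\mathfrak{f}(\pi_{\widetilde{B}})})(\C_\p)$ and the cokernel of $\cup\, b_{\univ_\p}$ on $\mathcal{M}(\mathfrak{f}(\pi_B),\{\p\};\Z)$, and projecting to the $\pi_B$-component yields the commensurability. Your unpacking of the isotypical step via \eqref{EQ} is also fine.

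There is, however, a concrete error in your description of the ``reassembly'' step, and it makes you manufacture a difficulty that is not there. You write that the Schottky groups $\Gamma_i$ lie in $\widetilde{B}_\p^{\ast}/F_\p^{\ast}\cong G_\p$ and that one must bridge them to arithmetic subgroups of $B^{\ast}/F^{\ast}$ via Jacquet--Langlands. This is backwards. The algebra $\widetilde{B}$ is \emph{non-split} at $\p$, so $\widetilde{B}_\p^{\ast}/F_\p^{\ast}$ is compact and certainly not isomorphic to $G_\p$. The content of the Cerednik--Drinfeld theorem is precisely that the uniformizing Schottky groups for the Shimura curve attached to $\widetilde{B}$ are $\p$-arithmetic subgroups of the \emph{interchanged} algebra $B$ (split at $\p$, definite at $v$). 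Thus the $\Gamma_i$ already sit inside $G(F)=B^{\ast}/F^{\ast}$, and after accounting for connected components via strong approximation their cohomology \emph{is} $\mathcal{M}(\mathfrak{f}(\pi_B),\{\p\};\Z)$ with no representation-theoretic transfer required. Jacquet--Langlands enters only to guarantee the existence of $\pi_{\widetilde{B}}$ and to match Hecke eigensystems, so that the $\pi_B$-isotypical component on the $B$-side corresponds to the $A_\pi$-quotient on the $\widetilde{B}$-side; it is not used to move the Schottky groups. Once you correct this point, your ``main obstacle'' dissolves and the argument is exactly the short one the paper records.
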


\begin{Remark}
 One also expects that the analogue of Theorem \ref{periods} holds for $d>0$. See for example \cite{Gr}, Conjecture 2, for the case that the narrow class number of $F$ is one and Conjecture 4.8 of \cite{GMS} for the general case. 
 The equivalence of their formulations and ours follows from Lemma \ref{comparison}.
\end{Remark}

\bibliographystyle{abbrv}
\bibliography{bibfile}
\vfill
\end{document}